\newtheorem{theorem}{Theorem}[section]
\newtheorem{definition}[theorem]{Definition}
\newtheorem{proposition}[theorem]{Proposition}
\newtheorem{lemma}[theorem]{Lemma}
\newtheorem{corollary}[theorem]{Corollary}
\newtheorem{remark}[theorem]{Remark}
\newtheorem{example}[theorem]{Example}
\newtheorem{algorithm}[theorem]{Algorithm}
\newtheorem*{theorem*}{Theorem}
\newtheorem*{corollary*}{Corollary}
\newtheorem*{proposition*}{Proposition}
\newcommand{\Z}{\mathbb{Z}}
\newcommand{\R}{\mathbb{R}}
\newcommand{\Pc}{\mathcal{P}}
\newcommand{\polytope}{P}
\newcommand{\poset}{\Pi}
\def\Rc{\mathcal{R}}
\newcommand{\Ext}{\operatorname{Ext}}
\newcommand{\hilb}{h}
\DeclareMathOperator{\coneZOperator}{\operatorname{C_{\Z}}}
\newcommand{\coneZ}[1]{\coneZOperator(#1)}
\newcommand{\Hilb}{H}
\newcommand{\semigroup}{\mathcal{S}}
\newcommand{\addTop}[1]{\overline{#1}}
\newcommand{\addBottom}[1]{\underline{#1}}
\newcommand{\posetTop}{\addTop{\poset}}
\newcommand{\posetBottom}{\addBottom{\poset}}
\newcommand{\posetTopBottom}{\addBottom{\posetTop}}
\DeclareMathOperator{\graphOperator}{\Gamma}
\newcommand{\graph}[1]{\graphOperator(#1)}
\DeclareMathOperator{\coneOperator}{cone}
\newcommand{\order}{\mathcal{O}}
\newcommand{\chain}{\mathcal{C}}
\newcommand{\cone}{\operatorname{cone}}
\newcommand{\coneOPi}{\coneOperator(\order(\poset))}
\newcommand{\semigroupOperator}{\mathcal{S}}
\newcommand{\semigroupOPi}{\semigroupOperator(\order(\poset))}
\DeclareMathOperator{\indicatorOperator}{\mathbbm{1}}
\newcommand{\indicator}[1]{\indicatorOperator_{#1}}
\DeclareMathOperator{\canonicalOperator}{\omega}
\newcommand{\canonical}[1]{\canonicalOperator_{#1}}
\newcommand{\Ca}{\operatorname{Cayley}}
\newcommand{\codeg}{\operatorname{codeg}}
\newcommand{\conv}{\operatorname{conv}}
\let\int\relax
\DeclareMathOperator{\int}{int}
\DeclareMathOperator{\lengthOperator}{length}
\newcommand{\length}[1]{\lengthOperator(#1)}
\newcommand{\height}{\operatorname{height}}
\newcommand{\depth}{\operatorname{depth}}
\newcommand{\rk}{\operatorname{rank}}
\newcommand{\path}{\mathcal{P}}
\newcommand{\x}{\boldsymbol{x}}
\newcommand{\y}{\boldsymbol{y}}
\newcommand{\p}{\boldsymbol{p}}
\newcommand{\z}{\boldsymbol{z}}
\renewcommand{\k}{\Bbbk}
\renewcommand{\v}{\boldsymbol{v}}
\renewcommand{\u}{\boldsymbol{u}}
\renewcommand{\b}{\mathbf{b}}
\newcommand{\flocomment}[1]{ {\textcolor{violet}{ #1  --Florian}}} 
\newcommand{\christiancomment}[1]{ {\textcolor{purple}{ #1  --Christian}}}  
\newcommand{\rem}[1]{} 
\newcommand{\ehr}{\operatorname{ehr}}
\newcommand{\Ehr}{\operatorname{Ehr}} 
\numberwithin{equation}{section}
\title{Levelness of Order Polytopes}
	\author{Christian Haase}
\address{Fachbereich Mathematik und Informatik, Freie Universit\"at Berlin, Germany}	
	\email{haase@math.fu-berlin.de}
	\author{Florian Kohl}
	\address{Fachbereich Mathematik und Informatik, Freie Universit\"at Berlin, Germany}
\email{fkohl@math.fu-berlin.de}	
	\author{Akiyoshi Tsuchiya}
	\address{Department of Pure and Applied Mathematics, Graduate School of Information Science and Technology, Osaka University, Suita, Osaka 565-0871, Japan}
\email{a-tsuchiya@ist.osaka-u.ac.jp}
\begin{document}

\maketitle

	\rem{
		\section*{Division of labour}
		
		by end of August
		
		\begin{itemize}
			\item[Christian:] characterization of minimal elements, product of
			level and IDP
			\item[Florian:] series parallel posets; algorithm, levelness in coNP
			(P ?), check NP hardness in longest path formulation with Stefan
			Felsner, Tibor Szabo, G\"unter Rote
			\item[Akiyoshi:] preliminary results (e.g., add a chain ...)
		\end{itemize}
		
		intro (main result(s), related work)
		Fink poset
		\begin{figure}[h]
			\center
			\begin{tikzpicture}[darkstyle/.style={circle,draw,fill=gray!40,minimum size=20}, scale = .5]
			\node[circle,draw=black,fill=white!80!black,minimum size=20, scale=.4] (1) at (0,6) { };
			\node[circle,draw=black,fill=white!80!black,minimum size=20, scale=.4] (2) at (0,4) { };
			\node[circle,draw=black,fill=white!80!black,minimum size=20, scale=.4] (3) at (0,2) { };
			\node[circle,draw=black,fill=white!80!black,minimum size=20, scale=.4] (4) at (0,0) { };
			
			\draw[thick] (1)--(2)--(3) -- (4);
			\node[circle,draw=black,fill=white!80!black,minimum size=20, scale=.4] (5) at (4,5) { };
			\node[circle,draw=black,fill=white!80!black,minimum size=20, scale=.4] (6) at (4,3) { };
			\node[circle,draw=black,fill=white!80!black,minimum size=20, scale=.4] (7) at (4,1) { };

			\draw[thick] (5) -- (6) -- (7);
			
			\node[circle,draw=black,fill=white!80!black,minimum size=20, scale=.4] (8) at (8,6) { };
			\node[circle,draw=black,fill=white!80!black,minimum size=20, scale=.4] (9) at (8,4) { };
			\node[circle,draw=black,fill=white!80!black,minimum size=20, scale=.4] (10) at (8,2) { };
			\node[circle,draw=black,fill=white!80!black,minimum size=20, scale=.4] (11) at (8,0) { };
			
			\draw[thick] (8)--(9)--(10) -- (11);
			\draw[thick] (2) -- (5) ;
			\draw[thick] (7) -- (10) ;
			\end{tikzpicture}
			\caption{Fink's poset.}
		\end{figure}
		
		\section*{Notation}
		\christiancomment{please use macros whenever conceivable. the
			following are proposals. feel free to modify.}
		
		\begin{tabular}{lll}
			path & $\path$ & \verb!\path! \\
			field & $\k$ & \verb!\k! \\
			polytope & $\polytope$ & \verb!\polytope! \\
			poset & $\poset$ & \verb!\poset! \\
			elements of $\poset$ & $i,j$ & \\
			order polytope & $\order(\poset)$ & \verb!\order(\poset)! \\
			vertices = indicator functions of filters & $\indicator{F}$ & \verb!\indicator{F}! \\                                   chain polytope & $\chain(\poset)$ & \verb!\chain(\poset)! \\
			cone over polytope & $\coneOperator(\polytope)$ & \verb!\coneOperator(\polytope)! \\
			semigroup of polytope & $\semigroupOperator(\polytope)$ & \verb!\semigroupOperator(\polytope)! \\
			lattice point & $m,n$ & \\
			semigroup of order polytope & $\semigroupOPi$ & \verb!\semigroupOPi! \\
			module of interior lattice points & $\canonical{\polytope}$ & \verb!\canonical{\polytope}! \\
			codegree of order polytope & $r$? $c$? $\rk(\posetTopBottom)$? & \\
			Ehrhart polynomial &  $\ehr$  & \verb!\ehr ! \\
			Ehrhart series &  $\Ehr$  & \verb!\Ehr ! \\  
			additional top element & $+\infty$ & with $+$ \\
			additional bottom element & $-\infty$ & \\
			height of an element $i$ & $\height(i)$ & \verb!\height(i)!\\
			poset with $+\infty$ & $\posetTop$ & \verb!\posetTop! \\
			poset with $-\infty$ & $\posetBottom$ & \verb!\posetBottom! \\
			poset with both & $\posetTopBottom$ & \verb!\posetTopBottom! \\
			bidirected Hasse graph & $\graph{\poset}$ & \verb!\graph{\poset}! \\
			length of anything & $\length{\text{anything}}$ & \verb!\length{anything}! \\
			Hilbert function & $\hilb(t)$  & \verb!\hilb(t)! \\
			Hilbert series & $\Hilb(z)$ &\verb!\Hilb(z)! \\  
		\end{tabular}
              }

	\begin{abstract}
		Since their introduction by Stanley~\cite{StanleyOrderPoly} order polytopes have been intriguing mathematicians as their geometry can be used to examine (algebraic) properties of finite posets. In this paper, we follow this route to examine the levelness property of order polytopes. The levelness property was also introduced by Stanley~\cite{Stanley-CM-complexes} and it generalizes the Gorenstein property. This property has been recently characterized by Miyazaki~\cite{Miyazaki} for the case of order polytopes. We provide an alternative characterization using weighted digraphs. Using this characterization, we give a new infinite family of level posets and show that determining levelness is in $\operatorname{co-NP}$. This family can be used to create infinitely many examples illustrating that the levelness property can not be characterized by the $h^{\ast}$-vector. We then turn to the more general family of alcoved polytopes. We give a characterization for levelness of alcoved polytopes using the Minkowski sum. Then we study several cases when the product of two polytopes is level. In particular, we provide an example where the product of two level polytopes is not level. 
		
	\end{abstract}
	
	\section{Introduction}
	\label{sec:Intro}
	Partially ordered sets --- or posets for short --- are ubiquitous objects in mathematics. One particularly nice way to study them was introduced by Richard Stanley. In \cite{StanleyOrderPoly}, he associated two geometric objects to every finite poset $\poset$, namely the order polytope $\order(\poset)$ and the chain polytope $\chain(\poset)$. These objects encode important information about the underlying poset $\poset$. For instance, the vertices of the order polytope are given by the indicator vectors of order filters, hence showing that the number of order filters equals the number of vertices of $\order(\poset)$. Furthermore, for a poset $\poset$ on $d$ elements, the Ehrhart polynomial
	\[
	\ehr_{\order(\poset)} (t) : = \# t \order(\poset) \cap \Z^d,
	\]
	which counts the number of integer points in dilates of $\order(\poset)$, equals the number of order-preserving maps $\poset \rightarrow [t+1]: = \{1,2,\dots, t+1 \}$, see \cite[Thm. 4.1]{StanleyOrderPoly}. It is often more convenient to consider the formal power series
	\[
	\Ehr_{\order(\poset)} (z) = 1 + \sum_{k \geq 1} \ehr_{\order(\poset)} (k) z^k = \frac{h^{\ast}(z)}{(1-z)^{d+1}} ,
	\]
	which is called the Ehrhart series of $\order(\poset)$. Finding and understanding the coefficients of $h^{\ast}$-polynomial is at the heart of Ehrhart theory. Therefore, inequalities for the coefficients are of special interest, see \cite{Stapledon1, Stapledon2, HibiInequalities, StanleyInequalities}. Hofscheier, Katth\"an, and Nill proved a structural result about $h^{\ast}$-polynomials, see \cite[Thm. 3.1]{Spanning}, where they showed that if the integer points of a lattice polytope span the integer lattice, then $h^{\ast}$ cannot have internal zeros. There are even some universal inequalities for $h^{\ast}$-vectors, see \cite{BallettiHigashitani}.  
	
	In this paper, we are particularly interested in the levelness property of order polytopes, where we recall that a polytope is level if its canonical module is generated by elements of the same degree, see Definition~\ref{def:level}. Levelness was first introduced by Stanley~\cite[p.~54]{Stanley-CM-complexes} and it generalizes the Gorenstein property. While the Gorenstein property can be completely classified by the $h^{\ast}$-vector being palindromic \cite{Stanley-HilbertFunctions}, the same is not true for the levelness property, see also Remark~\ref{rem:HibiExampleSameHStar}. However, we have the following inequalities:
	\begin{proposition}[{\cite[3.3 Prop]{StanleyGreenBook}}]
		Let $\mathcal{R}=\bigoplus_{i\in\Z_{\geq 0}} \mathcal{R}_i$ be a standard level $\k$-algebra with Hilbert series
		\[
		\operatorname{Hilb}(\mathcal{R}, t) = \frac{h^{\ast}_0+ h_1^{\ast}t + \dots + h^{\ast}_s t^s}{(1-t)^d},
		\]
		where $h^{\ast}_s \neq 0$. Then, for all $i$, $j$ with $h_{i+j}^{\ast}>0$, we have $h_i^{\ast} \leq h_j^{\ast}h_{i+j}^{\ast}$.
	\end{proposition}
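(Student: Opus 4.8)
Since this is the standard inequality for level algebras, the plan is to run the classical argument: reduce to the Artinian case and then exhibit an explicit injective $\k$-linear map whose domain has dimension $h^{\ast}_i$ and whose codomain has dimension $h^{\ast}_j h^{\ast}_{i+j}$. Being level, $\mathcal{R}$ is Cohen--Macaulay of Krull dimension $d$; after the harmless base change to an infinite field I would pick a linear system of parameters $\theta_1,\dots,\theta_d$ and set $A := \mathcal{R}/(\theta_1,\dots,\theta_d)$. This is a standard graded Artinian algebra with $\operatorname{Hilb}(A,t)= h^{\ast}_0+h^{\ast}_1 t+\dots+h^{\ast}_s t^s$, so $\dim_\k A_m = h^{\ast}_m$, and --- passing through graded Matlis duality $\omega_A\cong\operatorname{Hom}_\k(A,\k)$ --- the level hypothesis on $\mathcal{R}$ becomes the statement that $\soc(A)$ is concentrated in a single degree, necessarily the top degree $s$ (here one uses $h^{\ast}_s\neq 0$, which guarantees $0\neq A_s\subseteq\soc(A)$). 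In particular $A_m\neq 0$ for all $0\le m\le s$, so the hypothesis $h^{\ast}_{i+j}>0$ just says $i+j\le s$; fix such $i,j\ge 0$.

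The heart of the proof is to show that the multiplication map
\[
\mu\colon A_i\longrightarrow\operatorname{Hom}_\k(A_j,A_{i+j}),\qquad \mu(a)(y)=ay,
\]
is injective, for then $h^{\ast}_i=\dim_\k A_i\le\dim_\k\operatorname{Hom}_\k(A_j,A_{i+j})=h^{\ast}_j h^{\ast}_{i+j}$. To prove injectivity, suppose $0\neq a\in A_i$ with $aA_j=0$. The nonzero submodule $aA\subseteq A$ has nonzero socle (pick a minimal nonzero submodule and apply Nakayama), so there is $c\in A$ with $0\neq ac\in\soc(A)$; since socle elements of a graded ring are homogeneous and $\soc(A)=A_s$, I may replace $c$ by its degree-$(s-i)$ component and assume $c=b\in A_{s-i}$ with $0\neq ab\in A_s$. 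Now $s-i\ge j$, and standard-gradedness gives $A_{s-i}=A_j\cdot A_{s-i-j}$, so writing $b=\sum_\ell y_\ell z_\ell$ with $y_\ell\in A_j$, $z_\ell\in A_{s-i-j}$ yields $0\neq ab=\sum_\ell(ay_\ell)z_\ell$, hence $ay_{\ell_0}\neq 0$ for some $\ell_0$ --- contradicting $aA_j=0$. So $\mu$ is injective and the inequality follows.

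The step I would be most careful about is the translation of ``level'' into ``$\soc(A)$ is pure'', together with the bookkeeping that pins this degree to $s$: this is the only place the hypothesis is used, and it is easy to slip on the degree shifts in $\omega_A\cong\operatorname{Hom}_\k(A,\k)$. Everything else is routine: existence of a linear system of parameters over an infinite field, invariance of the $h^{\ast}$-vector and of levelness under Artinian reduction and flat base change, homogeneity of socle elements in a graded ring, and the identity $A_m=A_jA_{m-j}$ for $m\ge j$ in a standard graded algebra. As a sanity check, when $i+j=s$ the argument collapses to the well-known embedding $A_i\hookrightarrow\operatorname{Hom}_\k(A_{s-i},A_s)$ and the bound $h^{\ast}_i\le h^{\ast}_{s-i}h^{\ast}_s$; the general inequality is exactly this one, ``factored through degree $j$''.
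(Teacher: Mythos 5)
The paper states this proposition only with a citation to Stanley's \emph{Combinatorics and Commutative Algebra} and offers no proof of its own, so there is no in-paper argument to compare against. Your proof is correct and is, in substance, the standard one behind Stanley's result: Artinian reduction by a linear system of parameters, identification of levelness with $\soc(A)=A_s$, and injectivity of the multiplication map $A_i\to\operatorname{Hom}_\k(A_j,A_{i+j})$, obtained by pushing a purported kernel element into the socle and then using $A_{s-i}=A_j\cdot A_{s-i-j}$ for the standard grading. The degree bookkeeping you flagged as the delicate point (that $A_s\subseteq\soc(A)$, $h^{\ast}_s\neq 0$, and purity force $\soc(A)=A_s$, and that $h^{\ast}_{i+j}>0$ is exactly $i+j\le s$) is handled correctly.
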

	In Section~\ref{sec:Background}, we will see that this result implies non-trivial inequalities for the $h^{\ast}$-polynomial of order polytopes that are not satisfied by all lattice polytopes.  
	
	While the Gorenstein property has been studied extensively in the past decades, the levelness property has only fairly recently been examined for certain classes of polytopes, with the exception of \cite{HibiStraightening}. Recent examples include \cite{EneHerzogHibi,HigashitaniYanagawa-Level,KohlOlsen}. In this paper, we focus on the levelness property of order polytopes, i. e., the levelness property of the Ehrhart ring of the order polytope.  Hibi~\cite{HibiGorenstein} was the first to examine minimal elements of the canonical module of this Ehrhart ring, which in this context is also known as the Hibi ring. In particular, he characterized Gorenstein posets. The biggest influence on this paper comes from \cite{Miyazaki}, where Miyazaki examines and characterizes levelness of order polytopes. We provide an alternative characterization using weighted digraphs $\Gamma(\poset')$ coming from subposets $\poset' \subset \poset \cup \{ \pm \infty\}$, see Definition~\ref{def:WeightedGraphs} for details.
	\begin{corollary*}[see Corollary~\ref{cor:GeneralGamma'}]
		Let $\poset$ be a finite poset. $\poset$ is level if and only if for all $\Gamma(\poset')$ that do not have a negative cycle, the digraph $\Gamma(\poset' \cup \{\textnormal{longest chains in } \posetTopBottom\})$ does not have a negative cycle.
	\end{corollary*}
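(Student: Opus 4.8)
The plan is to obtain this as the explicit, digraph-level reading of Corollary~\ref{cor:GeneralGamma'}. Recall from Definition~\ref{def:level} that $\poset$ is level precisely when the canonical module $\canonical{\order(\poset)}$ of the Hibi ring is generated in a single degree. By Hibi's description of this module~\cite{HibiGorenstein}, its minimal generators correspond to certain indecomposable lattice points, i.e.\ strictly order-preserving integer functions $h$ on $\posetTopBottom$ normalized by $h(-\infty)=0$, and the degree of such an $h$ is $h(+\infty)$; the smallest degree that occurs is the codegree $c = \codeg(\order(\poset))$, which equals the length of a longest chain in $\posetTopBottom$. Thus $\poset$ is level if and only if no minimal generator has degree $>c$. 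The first step is to record, along the lines of Definition~\ref{def:WeightedGraphs}, the dictionary between such lattice points and weighted digraphs: for a subposet $\poset'\subseteq\posetTopBottom$, the digraph $\graph{\poset'}$ encodes the difference system ``$h(j)-h(i)=1$ on the covers of $\poset'$ and $h(j)-h(i)\ge 1$ on the remaining covers of $\posetTopBottom$'', so by the standard potential/shortest-path duality $\graph{\poset'}$ has no negative cycle exactly when this system is feasible over $\Z$, i.e.\ exactly when the tightness pattern prescribed by $\poset'$ is realized by an honest element of $\canonical{\order(\poset)}$.

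Second, I would analyze what forces a realizable pattern $\poset'$ up in degree. Adjoining to $\poset'$ the covers of a longest chain of $\posetTopBottom$ adds the equations that pin $h(+\infty)-h(-\infty)$ to exactly $c$; hence $\graph{\poset'\cup\{\text{longest chains in }\posetTopBottom\}}$ has no negative cycle if and only if the pattern $\poset'$ is compatible with degree $c$, and when it does have a negative cycle every realization of $\poset'$ has degree strictly larger than $c$. Combining the two steps with Corollary~\ref{cor:GeneralGamma'} — which, in this language, says that the minimal generators of $\canonical{\order(\poset)}$ are realized exactly by the negative-cycle-free tightness patterns $\poset'$ that ``block every filter'' — gives the equivalence: if for every negative-cycle-free $\poset'$ the enlarged digraph is also negative-cycle-free, then every minimal generator is compatible with, hence actually has, degree $c$, so $\poset$ is level; conversely, a minimal generator of degree $t>c$ produces $\poset'=$ its tight pattern, which is negative-cycle-free yet has no realization of degree $c$, so the enlarged digraph has a negative cycle, violating the condition.

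The main obstacle will be the bookkeeping that links Corollary~\ref{cor:GeneralGamma'} to the negative-cycle test, namely: (i) that every negative-cycle-free $\poset'$ extends to the tight pattern of a genuine minimal generator, so that it is enough to test these $\poset'$ and no witnesses are lost by discarding patterns with a negative cycle in $\graph{\poset'}$ (those correspond to no lattice point at all); and (ii) that the \emph{minimum} degree attained over all realizations of a fixed pattern $\poset'$ is exactly the edge-length of a longest path from $-\infty$ to $+\infty$ in $\graph{\poset'}$, so that ``degree $>c$ is forced'' is detected precisely by a negative cycle appearing after the longest chains of $\posetTopBottom$ are added. Both facts are essentially the content of the analysis underlying Corollary~\ref{cor:GeneralGamma'} (and mirror Miyazaki's description~\cite{Miyazaki}), so the present corollary is its specialization to negative-cycle-free $\poset'$; the remaining details are the routine verification that adjoining a longest chain is compatible with the difference system exactly when it creates no negative cycle.
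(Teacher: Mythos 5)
Your first direction --- level implies the condition --- is sketched essentially the way the paper does it (run Bellman--Ford on $\Gamma(\poset')$ to produce an interior point $\b$ realizing the tight pattern, use levelness to subtract a degree-$r$ interior point $\tilde\b$, observe that $\tilde\b$ still realizes $\poset'$ and is sharp along the longest chains, and conclude via Lemma~\ref{lem:NoNegCycles}). That part is fine, modulo details you do not spell out.

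The other direction is where the argument breaks. You write: ``a minimal generator of degree $t>c$ produces $\poset'=$ its tight pattern, which is negative-cycle-free yet has no realization of degree $c$, so the enlarged digraph has a negative cycle.'' This does not follow. Suppose, toward a contradiction to your claim, that $\Gamma(\poset'\cup\{\text{longest chains}\})$ is feasible with realization $h$ at degree $c$. To conclude that $\b$ is decomposable you would need $\b-h\in\cone(\order(\poset))$, i.e.\ $b_j-b_i\geq h_j-h_i$ for \emph{every} cover $j\gtrdot i$. On covers in $\poset'$ this is automatic (both sides equal $1$), but on covers outside $\poset'$ you only know $b_j-b_i\geq 2$ and $h_j-h_i\geq 1$, and nothing prevents $h_j-h_i>b_j-b_i$. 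The system encoded by $\Gamma(\poset'\cup\{\text{longest chains}\})$ has no upper bounds $h_j-h_i\leq b_j-b_i$ on the covers outside $\poset'$ --- those would correspond to down-edges of weight $b_j-b_i\geq 2$, not weight $1$ --- so feasibility of the enlarged digraph is strictly weaker than decomposability of $\b$. Consequently ``compatible with degree $c$, hence actually has degree $c$'' is an unjustified leap, and the tight pattern of an arbitrary high-degree minimal generator need not certify non-levelness in the sense of the corollary.

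This is precisely the point at which the paper does \emph{not} argue directly from an arbitrary minimal generator. Instead it invokes Miyazaki's characterization (Lemma~\ref{lem:Miyazaki-zigzag} via Proposition~\ref{prop:Miyazaki-zigzag}): non-levelness produces a condition-$N$ sequence $\path$ with $r(\path)=r_{\max}>r$, together with the special minimal element $\y(\path)$ of Lemma~\ref{lem:miya}. One then takes $\poset'(\path)$ to be only the covers on the \emph{up}-paths of $\path$. This choice does two things simultaneously: $\Gamma(\poset'(\path))$ is a subgraph of $\Gamma_{\y(\path)}$, hence negative-cycle-free by Lemma~\ref{lem:NoNegCycles}; and the zigzag structure of $\path$ lets one write down an explicit negative cycle in the enlarged digraph --- traverse the reverse of $\path$ from $-\infty$ to $+\infty$ (up-edges for the down-parts, the $\poset'(\path)$ down-edges for the up-parts, total weight $-r(\path)$) and close it with a longest chain from $+\infty$ to $-\infty$ (weight $+r$), giving total $r-r(\path)<0$. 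Your proposal lacks any analogue of this structural input, and your obstacles (i) and (ii) do not supply it: (ii) correctly describes what a negative cycle in the enlarged digraph detects, but does not show that a high-degree minimal generator produces a $\poset'$ whose \emph{minimum-degree realization} exceeds $c$, which is the missing fact.

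Finally, a logical point: the statement you are asked to prove \emph{is} Corollary~\ref{cor:GeneralGamma'}, yet you repeatedly appeal to ``Corollary~\ref{cor:GeneralGamma'}'' (and, in a mis-stated form, as a description of minimal generators) as an ingredient. That makes the write-up circular as it stands; the actual ingredients you need are Theorem~\ref{thm:levelchar} together with Miyazaki's Lemmas~\ref{lem:miya} and~\ref{lem:Miyazaki-zigzag}, or else an independent proof that non-levelness yields a zigzag pattern $\poset'$ of the required form.
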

	This corollary enables us to use the Bellman--Ford algorithm to check levelness. As a direct consequence, we get that determining levelness is in $\operatorname{co-NP}$:
	\begin{corollary*}[see Corollary~\ref{cor:co-NP}]
		Levelness of order polytopes is in $\operatorname{co-NP}$.
	\end{corollary*}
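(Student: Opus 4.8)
The plan is to show that the \emph{complement} of the levelness problem lies in $\operatorname{NP}$, i.e., that every finite poset $\poset$ (given, say, by its Hasse diagram) for which $\order(\poset)$ is not level admits a polynomial-size certificate of non-levelness that can be verified in polynomial time. By Corollary~\ref{cor:GeneralGamma'}, $\poset$ is not level precisely when there exists a subposet $\poset' \subseteq \poset \cup \{\pm\infty\}$ such that $\Gamma(\poset')$ has no negative cycle but $\Gamma(\poset' \cup \{\textnormal{longest chains in } \posetTopBottom\})$ does have one. I would take such a $\poset'$ as the certificate; since it is just a subposet of $\poset \cup \{\pm\infty\}$, it can be encoded using $O(d)$ bits, where $d = |\poset|$.

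The verifier then performs the following, all in polynomial time. First, from $\poset'$ it builds the weighted digraph $\Gamma(\poset')$ according to Definition~\ref{def:WeightedGraphs}; this digraph has at most $d+2$ vertices and integer edge weights of size polynomial in $d$, so its encoding is polynomial. Second, it determines the ``longest chains'' part: a vertex of $\posetTopBottom$ lies on a longest chain from $-\infty$ to $+\infty$ exactly when the length of a longest chain from $-\infty$ to it plus the length of a longest chain from it to $+\infty$ equals $\rk(\posetTopBottom)$, and similarly for covering edges; since the Hasse diagram of $\posetTopBottom$ is acyclic, these longest-path quantities are computed in polynomial time by dynamic programming, so the digraph $\Gamma(\poset' \cup \{\textnormal{longest chains in } \posetTopBottom\})$ is assembled in polynomial time. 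Third, it runs the Bellman--Ford algorithm on each of the two digraphs to detect negative cycles, which costs $O(VE)$ with polynomially bounded weights, and it accepts iff the first digraph has no negative cycle and the second one does.

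The one step that deserves care is the second: at first glance ``adding all longest chains'' seems to require listing the — potentially exponentially many — longest chains of $\posetTopBottom$, but in fact the vertices and edges that participate in some longest chain are cut out by the handful of longest-path computations above, so the augmented digraph is produced without any enumeration. Granting this, the three subroutines together form a polynomial-time verifier for non-levelness with polynomial-size witness $\poset'$, so non-levelness is in $\operatorname{NP}$ and hence levelness of order polytopes is in $\operatorname{co-NP}$.
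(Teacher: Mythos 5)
Your proof is correct and takes essentially the same approach as the paper: both use Corollary~\ref{cor:GeneralGamma'} to reduce non-levelness to the existence of a subposet $\poset'$ whose associated digraph $\Gamma(\poset')$ has no negative cycle while $\Gamma(\poset' \cup \{\text{longest chains}\})$ does, and both verify the certificate by running the Bellman--Ford algorithm twice. The one addition you make — explaining via dynamic programming on the acyclic Hasse diagram how to assemble the ``longest chains'' edges without enumerating the (possibly exponentially many) longest chains — is a useful clarification that the paper leaves implicit, but it does not change the argument.
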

	
	We show that the necessary condition for levelness of order polytopes in \cite[Thm. 4.1]{EneHerzogHibi} is indeed equivalent to a special case of our characterization. Furthermore, we give an example that was related to us by Alex Fink showing that this condition is not sufficient, see Remark~\ref{rem:Fink} and Figure~\ref{fig:Fink}.
	\begin{theorem*}[see Theorem~\ref{thm:HibiConditionIsEquivalent}]
		Let $\poset$ be a finite poset and $r= \operatorname{codeg}(\order(\poset))$.
		The following are equivalent:
		\begin{enumerate}
			\item 	The inequality
			\begin{equation*}
				\label{eq:heightdepth}
				\operatorname{height}(j) + \operatorname{depth} (i) \leq \operatorname{rank} (\posetTopBottom) +1
			\end{equation*}
			is satisfied for all $j \gtrdot i \in \poset$.
			\item for all Hasse edges $j\gtrdot i \in \poset$ there is an integer point $\x \in r\int(\order(\poset))$ such that  $x_j = x_i +1$.
		\end{enumerate}
	\end{theorem*}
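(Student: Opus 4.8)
Both bullet points quantify over the Hasse edges $j\gtrdot i$ of $\poset$, so it suffices to fix one such edge and prove the equivalence of the two conditions restricted to that edge; the full equivalence then follows by taking the conjunction over all Hasse edges. I will use two facts from Section~\ref{sec:Background}: first, under the identification $\order(\poset)=\{\x\in\R^{\poset}:0\le x_v\le 1 \text{ and } x_v\le x_w \text{ whenever } v\le_{\poset}w\}$, an integer point of $r\int(\order(\poset))$ is exactly a strictly order-preserving map $\x\colon\poset\to\{1,\dots,r-1\}$; second, $r=\codeg(\order(\poset))=\rk(\posetTopBottom)$ equals $\ell+1$, where $\ell$ is the number of elements of a longest chain of $\poset$. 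Write $H(v):=\height(v)$ for the number of elements of a longest chain of $\poset$ with greatest element $v$ (equivalently, the number of edges of a longest chain of $\posetTopBottom$ from $-\infty$ to $v$) and $D(v):=\depth(v)$ for the symmetric quantity with $+\infty$; then the inequality in the first bullet point, for the edge $j\gtrdot i$, reads $H(j)+D(i)\le r+1$.

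\textbf{The easy direction $(2)\Rightarrow(1)$.}
Let $\x$ be an integer point of $r\int(\order(\poset))$ with $x_j=x_i+1$. Following a longest chain of $\poset$ with greatest element $j$ and using that consecutive values strictly increase and all values are $\ge 1$ gives $x_j\ge H(j)$; following a longest chain with least element $i$ and using that all values are $\le r-1$ gives $x_i\le r-D(i)$. Hence $H(j)\le x_j=x_i+1\le r-D(i)+1$, which is the inequality in $(1)$ for this edge. (In particular, the contrapositive of this paragraph already shows $(1)$ fails $\Rightarrow$ $(2)$ fails.)

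\textbf{The main direction $(1)\Rightarrow(2)$.}
Here I argue with longest paths in a difference-constraint digraph. Let $G$ be the weighted digraph on the vertex set of $\posetTopBottom$ with an arc $a\to b$ of weight $1$ for every cover $a\lessdot b$ of $\posetTopBottom$, one arc $+\infty\to-\infty$ of weight $-r$, and one extra arc $j\to i$ of weight $-1$ (the last two encode $x_{+\infty}\le x_{-\infty}+r$ and $x_i\ge x_j-1$). The key claim is that, under the hypothesis $H(j)+D(i)\le r+1$, the digraph $G$ has no directed cycle of positive weight: every directed cycle uses at least one of the two ``backward'' arcs, and one checks the three cases — using only $+\infty\to-\infty$ gives weight $\le r-r=0$ since a longest chain of $\posetTopBottom$ has $\rk(\posetTopBottom)=r$ edges; using only $j\to i$ gives weight $1+(-1)=0$, because $j$ covers $i$ so the only directed cover-path from $i$ to $j$ has length $1$; and using both gives weight at most $H(j)+(-1)+D(i)+(-r)\le 0$ by the hypothesis. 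Consequently the longest-walk potential $x_v:=\max\{\text{weight of a walk from }-\infty\text{ to }v\text{ in }G\}$ is finite, with $x_{-\infty}=0$ and $x_{+\infty}=r$ (the value is $\ge r$ via a longest chain and $\le r$ because a simple path from $-\infty$ to $+\infty$ cannot traverse the arc into $-\infty$ and, if it traverses $j\to i$, has weight at most $H(j)-1+D(i)\le r$). The cover arcs give $x_b\ge x_a+1$, so $\x$ is strictly order-preserving; the arc $j\to i$ gives $x_i\ge x_j-1$, which together with the cover $i\lessdot j$ forces $x_j=x_i+1$; and $1\le H(v)\le x_v\le x_{+\infty}-D(v)=r-D(v)\le r-1$, so $\x\in r\int(\order(\poset))\cap\Z^{\poset}$ is exactly the point required by $(2)$.

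\textbf{Where the difficulty sits.}
The one load-bearing point is the positive-cycle analysis in $G$: the only potentially dangerous cycle is the one through both backward arcs, and the hypothesis $H(j)+D(i)\le r+1$ is precisely what keeps its weight nonpositive (dually, what makes $x_{+\infty}$ equal to $r$ rather than larger). The supporting facts — that a longest chain of $\posetTopBottom$ has exactly $\rk(\posetTopBottom)$ edges, and that a covering relation admits no longer directed cover-path — are elementary, and the remaining verifications are routine bookkeeping with the height and depth functions. If preferred, the whole argument can be rephrased in terms of the weighted digraphs $\Gamma(\poset')$ of Definition~\ref{def:WeightedGraphs}, with the extra arc $j\to i$ encoding the additional constraint $x_j=x_i+1$.
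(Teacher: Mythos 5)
Your proof is correct, but it takes a genuinely different route from the paper for the direction $(1)\Rightarrow(2)$. The paper constructs the required interior lattice point by an explicit labeling: it fixes $x_j=\height(j)$, $x_i=\height(j)-1$, and then recursively sets $x_k=\max\{\height(k),\,x_i+\length{[i,k]}\}$ for $k>i$ and $x_k=\height(k)$ otherwise, using inequality \eqref{eq:heightdepth} to verify that all labels land strictly between $0$ and $r$. You instead encode the constraints $x_b\geq x_a+1$ (covers), $x_{+\infty}\leq x_{-\infty}+r$, and $x_i\geq x_j-1$ in a weighted digraph, check that the hypothesis rules out positive-weight cycles (the only cycle that could fail passes through both backward arcs, and its weight is controlled exactly by $\height(j)+\depth(i)\leq r+1$), and then take the longest-walk potential from $-\infty$. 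The two constructions in fact produce the same point, since the paper's recursion is precisely an unrolled longest-walk computation. Your version has the advantage of aligning directly with the weighted-digraph framework of Definition~\ref{def:WeightedGraphs} and Corollary~\ref{cor:GeneralGamma'}, making it transparent why the Ene--Herzog--Hibi--Saeedi condition is the single-edge special case $\poset'=\{j\gtrdot i\}$ of the paper's general criterion, whereas the paper's hands-on labeling is more self-contained but leaves some strict-monotonicity verifications implicit. One caution: you define $H(v)$ first as a count of elements and then as a count of edges; the parenthetical clarification resolves it, but it is worth being uniform with the paper's convention, which uses edge counts throughout.
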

	
	In Section~\ref{sec:SeriesParallelPosets}, we use Corollary~\ref{cor:GeneralGamma'} to describe an infinite family of level order polytopes. The main ingredient is the ordinal sum of two posets, denoted $\triangleleft$.
	\begin{theorem*}[see Theorem~\ref{thm:OrdSumLevel}]
		The ordinal sum $\poset ={\poset_1} \triangleleft \poset_2$ of two posets $\poset_1$, $\poset_2$ is level if and only if both $\poset_1$ and $\poset_2$ are level.
	\end{theorem*}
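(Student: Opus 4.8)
The plan is to derive the theorem from the weighted-digraph characterization in Corollary~\ref{cor:GeneralGamma'}, by showing that for an ordinal sum all the relevant digraphs split along the interface between $\poset_1$ and $\poset_2$, so that negative cycles are detected on the two sides separately. Throughout, write $\poset=\poset_1\triangleleft\poset_2$ and assume $\poset_1,\poset_2\neq\emptyset$ (the degenerate cases being trivial). I would first record the facts about ordinal sums used below. Since $\posetTopBottom=\{-\infty\}\triangleleft\poset_1\triangleleft\poset_2\triangleleft\{+\infty\}$, every maximal chain of $\posetTopBottom$ has the form $-\infty<C_1<C_2<+\infty$ with $C_i$ a maximal chain of $\poset_i$, and it is longest precisely when each $C_i$ is longest in $\poset_i$; consequently $r:=\codeg(\order(\poset))=r_1+r_2-1$ where $r_i:=\codeg(\order(\poset_i))$, and the restriction of a longest chain of $\posetTopBottom$ to $\poset_i\cup\{\pm\infty\}$ is a longest chain of $\addBottom{\addTop{\poset_i}}$, with every longest chain of $\addBottom{\addTop{\poset_i}}$ arising this way. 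Finally, for a subposet $\poset'\subseteq\poset\cup\{\pm\infty\}$ set $\poset'_i:=\poset'\cap(\poset_i\cup\{\pm\infty\})\subseteq\addBottom{\addTop{\poset_i}}$; since in $\poset$ every element of $\poset_1$ lies below every element of $\poset_2$, the subposet $\poset'$ is again an ordinal-sum configuration with its $\poset_1$-part entirely below its $\poset_2$-part.

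The core of the argument is a \emph{series decomposition} of the digraphs of Definition~\ref{def:WeightedGraphs}: $\graph{\poset'}$ is the series composition of $\graph{\poset'_1}$ and $\graph{\poset'_2}$, glued along the interface (the top of $\graph{\poset'_1}$ identified with the bottom of $\graph{\poset'_2}$, the outer $-\infty$ and $+\infty$ retained); geometrically this reflects that an interior lattice point of a dilate $k\order(\poset)$ restricts to an interior lattice point on $\poset_1$ and one on $\poset_2$ subject to a single strict inequality across the interface. The content one must verify is that the interface edges — those encoding the $\pm\infty$ constraints and the codegree $r=r_1+r_2-1$ — are calibrated so that a closed walk of $\graph{\poset'}$ which crosses the interface can be rerouted through a longest chain without changing its total weight, turning it into the concatenation of a closed walk in $\graph{\poset'_1}$ and one in $\graph{\poset'_2}$. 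From this I would extract two consequences. First, $\graph{\poset'}$ has a negative cycle if and only if $\graph{\poset'_1}$ or $\graph{\poset'_2}$ has one. Second, because adding all longest chains of $\posetTopBottom$ to $\poset'$ restricts on each side to adding all longest chains of $\addBottom{\addTop{\poset_i}}$ to $\poset'_i$, the digraph $\graph{\poset'\cup\{\textnormal{longest chains in }\posetTopBottom\}}$ has a negative cycle if and only if $\graph{\poset'_1\cup\{\textnormal{longest chains in }\addBottom{\addTop{\poset_1}}\}}$ or $\graph{\poset'_2\cup\{\textnormal{longest chains in }\addBottom{\addTop{\poset_2}}\}}$ does.

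To finish, I would apply Corollary~\ref{cor:GeneralGamma'}: $\poset$ fails to be level exactly when some $\poset'\subseteq\poset\cup\{\pm\infty\}$ has $\graph{\poset'}$ free of negative cycles while $\graph{\poset'\cup\{\textnormal{longest chains in }\posetTopBottom\}}$ has one. By the two consequences above, this happens if and only if, for $i=1$ or $i=2$, the subposet $\poset'_i\subseteq\addBottom{\addTop{\poset_i}}$ witnesses the failure of levelness of $\poset_i$ in the sense of Corollary~\ref{cor:GeneralGamma'}; conversely any such witness on one side extends to a $\poset'$ by choosing any admissible subposet (for instance the empty one, or a single longest chain) on the other side. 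Hence $\poset$ is not level iff $\poset_1$ or $\poset_2$ is not level, which is the contrapositive of the claim.

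I expect the series decomposition of the digraphs to be the main obstacle: one has to open up Definition~\ref{def:WeightedGraphs} at the ``middle'' covering relations (maximal elements of $\poset_1$ covered by minimal elements of $\poset_2$) and at $\pm\infty$, and show at the level of edge weights that a closed walk crossing the interface decomposes into closed walks on the two sides with no change of sign — and it is precisely here that the identity $r=r_1+r_2-1$ and the description of longest chains of $\posetTopBottom$ as concatenations of longest chains of $\poset_1$ and $\poset_2$ are needed. A lesser point requiring care is the bookkeeping of which subposets on the two sides glue to a subposet of $\poset$, together with the degenerate cases $\poset_1=\emptyset$ or $\poset_2=\emptyset$.
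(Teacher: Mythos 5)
You have the right high-level strategy --- reduce to Corollary~\ref{cor:GeneralGamma'} and decompose the weighted digraphs along the interface of the ordinal sum --- but the formal decomposition you propose has a genuine gap, and it is not merely cosmetic. The restriction $\poset'_i := \poset' \cap (\poset_i \cup \{\pm\infty\})$ discards precisely the \emph{crossing} cover relations, which carry the information across the interface. If $\poset'$ contains a cover $b \gtrdot a$ with $b \in \poset_2$ and (necessarily) $a$ maximal in $\poset_1$, this cover appears in neither $\poset'_1$ nor $\poset'_2$. In particular $\graph{\poset'_1}$ can never acquire a down-edge out of the $\infty$ of $\addBottom{\overline{\poset_1}}$, because no cover $\infty \gtrdot a$ with $a\in\poset_1$ exists in $\posetTopBottom$ to be inherited by intersection. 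Consequently a cycle of $\graph{\poset'}$ whose upward excursion reaches into $\min(\poset_2)$ has no counterpart in $\graph{\poset'_1}$, and your ``first consequence'' (negative cycles of $\graph{\poset'}$ are detected on the two sides separately) fails for such cycles. What is needed instead is the \emph{quotient}, not the intersection: collapse all of $\poset_2\cup\{\infty\}$ to the single $\infty$ of $\addBottom{\overline{\poset_1}}$, and dually collapse $\poset_1\cup\{-\infty\}$ to the $-\infty$ of $\addBottom{\overline{\poset_2}}$. Under that map the crossing cover $b\gtrdot a$ becomes the genuine cover $\infty \gtrdot a$ of $\addBottom{\overline{\poset_1}}$, and only then does the induced digraph on each side register the cycles that excurse across the interface. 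This is precisely what the paper's proof does.

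A second, related, issue: the series-composition picture, in which the $\infty$ of $\addBottom{\overline{\poset_1}}$ is identified with the $-\infty$ of $\addBottom{\overline{\poset_2}}$, introduces an artificial bottleneck vertex at the interface. In $\graph{\poset'}$ the interface is not a single vertex but a complete bipartite graph between $\max(\poset_1)$ and $\min(\poset_2)$ with single Hasse edges of weight $-1$; passing through the artificial node costs one extra unit on every crossing path. This off-by-one is exactly the $r = r_1 + r_2 - 1$ shift you noticed, but your ``rerouting through a longest chain preserves weight'' claim does not absorb it as stated. With the quotient fix in place the argument can be pushed through along the lines you sketch, but two further caveats (handled explicitly in the paper) need attention: a negative cycle of $\graph{\poset'\cup\{\textnormal{longest chains}\}}$ that crosses the interface may quotient to a wedge of cycles rather than a single cycle, and one must observe that a negative-total wedge contains a negative constituent cycle; and the weight of a crossing cycle decomposes as $w = w_1 + w_2 - w_{12}$ with $w_{12}$ the double-counted interface contribution, which vanishes because the crossing up- and down-edges occur in equal numbers with weights $-1$ and $+1$. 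Finally, the converse direction (extending a witness $\poset'_1\subseteq\addBottom{\overline{\poset_1}}$ to a witness $\poset'\subseteq\posetTopBottom$) also needs the quotient viewpoint, since a cover $\infty\gtrdot a$ in $\poset'_1$ must be lifted to a cover $b\gtrdot a$ for some $b\in\min(\poset_2)$, which is a preimage under $q_1$ and not anything obtainable by intersection.
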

	
	Moreover, the ordinal sum operation interacts nicely with the $h^{\ast}$-polynomial.
	\begin{proposition*}[see Proposition~\ref{prop:HStarOrdinalSum}]
		Let $\poset, {\poset_1}, \poset_2$, be finite posets where $\poset := {\poset_1} \triangleleft \poset_2$. Moreover, let $h^{\ast}_{\poset}, h^{\ast}_{\poset_1}, h^{\ast}_{\poset_2}$ be the $h^{\ast}$-polynomial of the Ehrhart series of the corresponding order polytopes. Then
		\begin{equation*}
			\label{eq:hstarordsum}
			h^{\ast}_{\poset} =  h^{\ast}_{\poset_1}  h^{\ast}_{\poset_2}.
		\end{equation*} 
	\end{proposition*}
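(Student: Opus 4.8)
The plan is to prove the equivalent statement for Ehrhart series. Writing $d_i := |\poset_i|$, the defining relation $\Ehr_{\order(\poset_i)}(z) = h^{\ast}_{\poset_i}(z)/(1-z)^{d_i+1}$ together with $|\poset| = d_1 + d_2$ shows that $h^{\ast}_{\poset} = h^{\ast}_{\poset_1}\, h^{\ast}_{\poset_2}$ is equivalent to
\[
	\Ehr_{\order(\poset)}(z) \;=\; (1-z)\,\Ehr_{\order(\poset_1)}(z)\,\Ehr_{\order(\poset_2)}(z).
\]
The cases where $\poset_1$ or $\poset_2$ is empty are immediate, since then $\order(\poset_i)$ is a point with $h^{\ast}_{\poset_i} = 1$, so I would assume both are nonempty. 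To control the left-hand side I would use Stanley's interpretation $\ehr_{\order(\poset)}(t) = \Omega_{\poset}(t+1)$, where $\Omega_{\poset}(m)$ is the number of order-preserving maps $\poset \to [m]$. Since $\Omega_{\poset}(0) = 0$ for nonempty $\poset$, the series $F_{\poset}(z) := \sum_{m \ge 0}\Omega_{\poset}(m)\,z^m$ equals $z\,\Ehr_{\order(\poset)}(z)$, and the displayed identity becomes $F_{\poset} = \tfrac{1-z}{z}\,F_{\poset_1} F_{\poset_2}$.

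The combinatorial heart is a product formula for the order polynomials of an ordinal sum. In $\poset_1 \triangleleft \poset_2$ every element of $\poset_1$ lies below every element of $\poset_2$, so an order-preserving map $f\colon \poset_1 \triangleleft \poset_2 \to [m]$ is precisely a pair $(f_1, f_2)$ of order-preserving maps $f_i\colon \poset_i \to [m]$ with $\max f_1 \le \min f_2$. I would stratify by $k := \max f_1$ and $\ell := \min f_2$. The number of order-preserving maps $\poset_1 \to [m]$ with maximum exactly $k$ is $\Omega_{\poset_1}(k) - \Omega_{\poset_1}(k-1)$; and, because $\{\ell, \ell+1, \dots, m\}$ is a chain on $m - \ell + 1$ elements, the number of order-preserving maps $\poset_2 \to [m]$ with minimum exactly $\ell$ is $\Omega_{\poset_2}(m-\ell+1) - \Omega_{\poset_2}(m-\ell)$. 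Summing over $1 \le k \le \ell \le m$ and setting $u = k$, $v = m - \ell + 1$ gives
\[
	\Omega_{\poset}(m) \;=\; \sum_{\substack{u,\, v \ge 1 \\ u + v \le m + 1}} \bigl(\Omega_{\poset_1}(u) - \Omega_{\poset_1}(u-1)\bigr)\bigl(\Omega_{\poset_2}(v) - \Omega_{\poset_2}(v-1)\bigr).
\]

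To conclude I would multiply by $z^m$, sum over $m \ge 0$, and swap the order of summation: for fixed $u, v$ the corresponding term appears in $\Omega_{\poset}(m)$ for every $m \ge u + v - 1$, and $\sum_{m \ge u+v-1} z^m = z^{u+v-1}/(1-z)$, so
\[
	F_{\poset}(z) \;=\; \frac{1}{z(1-z)}\Bigl(\sum_{u \ge 1}\bigl(\Omega_{\poset_1}(u) - \Omega_{\poset_1}(u-1)\bigr) z^u\Bigr)\Bigl(\sum_{v \ge 1}\bigl(\Omega_{\poset_2}(v) - \Omega_{\poset_2}(v-1)\bigr) z^v\Bigr).
\]
Since $\Omega_{\poset_i}(0) = 0$, each inner sum telescopes to $(1-z)\,F_{\poset_i}(z)$, giving $F_{\poset} = \tfrac{1-z}{z}\,F_{\poset_1} F_{\poset_2}$; substituting back $F_{\poset} = z\,\Ehr_{\order(\poset)}$ yields the Ehrhart-series identity and hence the proposition. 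The step most likely to hide an error is the count of order-preserving maps with a prescribed maximum or minimum, especially the boundary indices $u = 1$ or $v = 1$ where the maximum of the $\poset_1$-part and the minimum of the $\poset_2$-part may coincide; this is precisely where the convention $\Omega_{\poset_i}(0) = 0$ absorbs the bookkeeping, so those boundary terms should be checked by hand. A more structural alternative would use the identification of $h^{\ast}_{\order(\poset)}(z)$ with the $\poset$-Eulerian polynomial $\sum_{w} z^{\operatorname{des}(w)}$ ranging over the linear extensions $w$ of $\poset$ for a natural labeling: labeling $\poset_1$ by $\{1, \dots, d_1\}$ and $\poset_2$ by $\{d_1+1, \dots, d_1+d_2\}$ (both naturally) makes every linear extension of $\poset_1 \triangleleft \poset_2$ a concatenation $w^{(1)} w^{(2)}$ with an ascent at the junction, so $\operatorname{des}(w) = \operatorname{des}(w^{(1)}) + \operatorname{des}(w^{(2)})$ and the two generating functions multiply directly.
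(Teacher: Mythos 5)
Your proof is correct, and it takes a genuinely different route from the paper's. The paper argues geometrically: it switches to the chain polytope (which has the same Ehrhart series as the order polytope by Theorem~\ref{thm:EhrhartPolyofOrder}), observes via Lemma~\ref{lem:OrdSumChainDesc} that $\chain(\poset_1 \triangleleft \poset_2) = \chain(\poset_1) \oplus \chain(\poset_2)$ is a free sum, and then invokes the Hibi--Higashitani result that $h^{\ast}$ is multiplicative under free sums of IDP polytopes containing the origin. Your first argument instead works entirely on the level of order polynomials: you decompose order-preserving maps on the ordinal sum by the value of $\max f_1$ and $\min f_2$ and compute generating functions directly; the stratification, the change of variables $u = k$, $v = m - \ell + 1$, and the telescoping using $\Omega_{\poset_i}(0) = 0$ all check out, and the final bookkeeping with the denominators $(1-z)^{d_i+1}$ is right. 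Your sketched ``structural alternative'' via the $\poset$-Eulerian polynomial $h^{\ast}_{\order(\poset)}(z) = \sum_w z^{\operatorname{des}(w)}$ and the descent-free concatenation of linear extensions is also correct and is probably the cleanest way to see the identity; it is the standard argument from the theory of $P$-partitions. What the paper's approach buys is that it immediately localizes the ordinal-sum operation to a clean geometric decomposition (free sum of chain polytopes), which is useful elsewhere in Section~\ref{sec:SeriesParallelPosets}; what your approach buys is self-containedness, avoiding the chain polytope and the free-sum lemma entirely.
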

	
	As we illustrate in Remark~\ref{rem:HibiExampleSameHStar}, Theorem~\ref{thm:OrdSumLevel} and Proposition~\ref{prop:HStarOrdinalSum} together can be used to create infinitely many examples of pairs of posets that have the same Ehrhart polynomial, but where one poset is level and the other one is not.
	
	We then turn to the more general class of alcoved polytopes. We give a Minkowski sum characterization of levelness for alcoved polytopes.
	\begin{proposition*}[see Proposition~\ref{al_level}]
		Let $\polytope \subset \R^d$ be an alcoved polytope and let $r=\textnormal{codeg}(\polytope)$.
		Then $\polytope$ is level if and only if
		for any integer $k \geq r$,
		it follows that  $(k\polytope)^{(1)}=(r\polytope)^{(1)}+(k-r)\polytope$, where $(l\polytope)^{(1)} : = \conv(l\polytope^{\circ} \cap \Z^d)$ for $l \in \{r,k \}$. 
	\end{proposition*}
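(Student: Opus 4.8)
The plan is to translate levelness into a lattice-point condition and then lean on the single structural fact that an alcoved polytope cut out by integral braid-type inequalities is a lattice polytope. Recall that an alcoved polytope is an intersection of half-spaces $\langle \a,\x\rangle\le c$ where $\a$ runs over differences $e_i-e_j$ of unit vectors (with the convention $e_0=\mathbf 0$) and $c\in\Z$; dilating by $l$ replaces each $c$ by $lc$, and passing to the interior and then to lattice points replaces $lc$ by $lc-1$ since $\langle\a,\x\rangle\in\Z$. Hence for $l\ge r:=\codeg(\polytope)$ the inner dilate $(l\polytope)^{(1)}=\conv(l\polytope^\circ\cap\Z^d)$ is again alcoved --- the polytope obtained from $l\polytope$ by tightening every right-hand side by $1$ --- and in particular $(l\polytope)^{(1)}\cap\Z^d=l\polytope^\circ\cap\Z^d$. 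On the algebra side, the Ehrhart ring of $\polytope$ is normal, hence Cohen--Macaulay, and by the Danilov--Stanley description its canonical module $\canonical{\polytope}$ has $k$-th graded piece spanned by the interior lattice points of $k\polytope$; thus $\polytope$ is level exactly when $\canonical{\polytope}$ is generated in its bottom degree $r$, which unwinds to
\begin{equation*}
k\polytope^\circ\cap\Z^d \ \subseteq\ \bigl(r\polytope^\circ\cap\Z^d\bigr)+\bigl((k-r)\polytope\cap\Z^d\bigr)\qquad\text{for all } k\ge r,
\end{equation*}
the reverse inclusion being automatic.

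For the forward implication I would assume this inclusion and add defining inequalities: since $(r\polytope)^{(1)}$ is cut out by $\langle\a,\x\rangle\le rc_\a-1$ and $(k-r)\polytope$ by $\langle\a,\x\rangle\le(k-r)c_\a$, their Minkowski sum lies in $\{\langle\a,\x\rangle\le kc_\a-1\}=(k\polytope)^{(1)}$; conversely every lattice point of $(k\polytope)^{(1)}$ lies in $(r\polytope)^{(1)}+(k-r)\polytope$ by the displayed inclusion (using $r\polytope^\circ\cap\Z^d\subseteq(r\polytope)^{(1)}$), and since $(k\polytope)^{(1)}$ is the convex hull of its lattice points while the right-hand side is convex, equality $(k\polytope)^{(1)}=(r\polytope)^{(1)}+(k-r)\polytope$ follows.

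For the converse, assume $(k\polytope)^{(1)}=(r\polytope)^{(1)}+(k-r)\polytope$ for all $k\ge r$ and fix $\x\in k\polytope^\circ\cap\Z^d$. The key move is to consider
\begin{equation*}
T:=(r\polytope)^{(1)}\cap\bigl(\x-(k-r)\polytope\bigr),
\end{equation*}
which is defined by the inequalities $\langle\a,\y\rangle\le rc_\a-1$ together with $\langle\a,\y\rangle\ge\langle\a,\x\rangle-(k-r)c_\a$, i.e.\ by a braid-type system with integral right-hand sides (the normal set being symmetric under $\a\mapsto-\a$), so $T$ is a lattice polytope. By construction $T\neq\emptyset$ if and only if $\x\in(r\polytope)^{(1)}+(k-r)\polytope$, which holds since $\x\in(k\polytope)^{(1)}$. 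Integrality then yields a lattice point $\v\in T$, and unwinding the definition gives $\v\in(r\polytope)^{(1)}\cap\Z^d=r\polytope^\circ\cap\Z^d$ and $\x-\v\in(k-r)\polytope\cap\Z^d$ --- exactly the decomposition of $\x$ demanded above, so $\polytope$ is level.

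The only genuinely nontrivial ingredient is the integrality of alcoved polytopes defined by integral braid inequalities, used both to identify the inner dilates $(l\polytope)^{(1)}$ and, decisively, to extract a lattice point from $T$; everything else is bookkeeping, the main pitfalls being to keep $k\ge r$ so all inner dilates are nonempty and to check that $T$ remains of braid type after the sign flips. I would also make sure to cite the canonical-module description in precisely the form needed and to note that for alcoved (hence integrally closed) polytopes the Ehrhart ring coincides with the polytopal algebra, so that "level" is unambiguous.
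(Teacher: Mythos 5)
Your proof is correct, and the forward direction is essentially the same as the paper's (the paper passes from the lattice-point equality $\int(k\polytope)\cap\Z^d = \int(r\polytope)\cap\Z^d + (k-r)\polytope\cap\Z^d$ to the polytope equality by taking convex hulls and using $\conv(A+B)=\conv(A)+\conv(B)$; you split the two inclusions and use the defining inequalities, but it is the same bookkeeping). The converse direction, however, takes a genuinely different and more elementary route. The paper reduces to its Corollary~\ref{cor:Alcoved_onto}, asserting that $(\polytope_1\cap\Z^d)\times(\polytope_2\cap\Z^d)\twoheadrightarrow(\polytope_1+\polytope_2)\cap\Z^d$ is onto for alcoved $\polytope_1,\polytope_2$, and it establishes this by showing the Cayley polytope $\Ca(\polytope_1,\polytope_2)$ has a regular unimodular triangulation, quoting a result of Haase--Paffenholz--Piechnik--Santos. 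You instead observe that for a target lattice point $\x$, the fiber polytope $T=(r\polytope)^{(1)}\cap\bigl(\x-(k-r)\polytope\bigr)$ is again a braid-type system with integer right-hand sides (negating $e_i-e_j$ stays in the braid system), hence has a totally unimodular constraint matrix and is a lattice polytope, so nonemptiness alone produces the required lattice point $\v$. This proves exactly the instance of Corollary~\ref{cor:Alcoved_onto} that the proposition needs, using only total unimodularity rather than the heavier triangulation machinery; the paper's route buys a cleaner modular structure (the surjectivity corollary is stated once and reused), while yours is self-contained and avoids citing HPPS.
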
	
	
	Then we examine when the Cartesian product of two alcoved polytopes is level. We arrive at the following result:
	\begin{theorem*}[see Theorem~	\ref{thm:prod}]
		Let $\polytope \subset \R^d$ and $Q \subset \R^{e}$ be alcoved polytopes.
		Suppose that $Q$ is level and  $r=\textnormal{codeg}(Q) \geq \dim \polytope+1$.
		Then $P \times Q \subset \R^{d+e}$ is level.
	\end{theorem*}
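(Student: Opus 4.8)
The plan is to verify, for $\polytope\times Q$, the Minkowski-sum criterion of Proposition~\ref{al_level}. First I record two facts. The product $\polytope\times Q\subset\R^{d+e}$ is again alcoved: padding each facet hyperplane of $\polytope$ with zeros in the last $e$ coordinates and each facet hyperplane of $Q$ with zeros in the first $d$ coordinates gives hyperplanes that still lie in the ambient arrangement and that cut out $\polytope\times Q$. And since $\bigl(\ell(\polytope\times Q)\bigr)^{\circ}=(\ell\polytope)^{\circ}\times(\ell Q)^{\circ}$, a lattice point is interior to $\ell(\polytope\times Q)$ exactly when each of its two coordinate blocks is interior to the corresponding dilate; hence $\codeg(\polytope\times Q)=\max\{\codeg(\polytope),\codeg(Q)\}$, which equals $r$ because $\codeg(\polytope)\le\dim\polytope+1\le r=\codeg(Q)$.

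Next I reduce the criterion to a statement about $\polytope$ alone. From $\conv(A\times B)=\conv(A)\times\conv(B)$ and the interior identity above we get $\bigl(\ell(\polytope\times Q)\bigr)^{(1)}=(\ell\polytope)^{(1)}\times(\ell Q)^{(1)}$ for all $\ell$, and from $(A\times B)+(C\times D)=(A+C)\times(B+D)$ we get
\[
\bigl(r(\polytope\times Q)\bigr)^{(1)}+(k-r)(\polytope\times Q)=\bigl[(r\polytope)^{(1)}+(k-r)\polytope\bigr]\times\bigl[(rQ)^{(1)}+(k-r)Q\bigr].
\]
Since a product of nonempty polytopes determines its factors, the criterion $\bigl(k(\polytope\times Q)\bigr)^{(1)}=\bigl(r(\polytope\times Q)\bigr)^{(1)}+(k-r)(\polytope\times Q)$ for $k\ge r$ is equivalent to the two equalities $(kQ)^{(1)}=(rQ)^{(1)}+(k-r)Q$ and $(k\polytope)^{(1)}=(r\polytope)^{(1)}+(k-r)\polytope$ holding for all $k\ge r$. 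The first holds because $Q$ is level with $\codeg(Q)=r$ (Proposition~\ref{al_level} applied to $Q$), so the theorem reduces to proving $(k\polytope)^{(1)}=(r\polytope)^{(1)}+(k-r)\polytope$ for all integers $k\ge r$, where now the only hypothesis is $r\ge\dim\polytope+1$.

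For that I would telescope down to the following lemma: for a $d$-dimensional lattice polytope $\polytope$ carrying a unimodular triangulation (e.g.\ any alcoved polytope) and any $m\ge d+1$,
\[
\bigl((m+1)\polytope\bigr)^{\circ}\cap\Z^d=\bigl((m\polytope)^{\circ}\cap\Z^d\bigr)+\bigl(\polytope\cap\Z^d\bigr).
\]
Iterating this from $m=r$ to $m=k-1$ (all admissible since $r\ge d+1$) and then taking convex hulls, using $\conv(A+B)=\conv(A)+\conv(B)$ and $\conv(\polytope\cap\Z^d)=\polytope$, delivers $(k\polytope)^{(1)}=(r\polytope)^{(1)}+(k-r)\polytope$. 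In the lemma, ``$\supseteq$'' is elementary, since a convex combination putting positive weight on a relative-interior point of $\polytope$ stays in the relative interior, so $(m\polytope)^{\circ}+\polytope\subseteq\bigl((m+1)\polytope\bigr)^{\circ}$. For ``$\subseteq$'', take a lattice point $x\in\bigl((m+1)\polytope\bigr)^{\circ}$, set $\bar x:=x/(m+1)\in\polytope^{\circ}$, and pick a maximal alcove $\sigma=\conv(v_0,\dots,v_d)$ of the triangulation with $\bar x\in\sigma$. Writing $\bar x=\sum_i\mu_iv_i$, unimodularity of $\sigma$ forces $n_i:=(m+1)\mu_i\in\Z_{\ge0}$, so $x=\sum_in_iv_i$ with $\sum_in_i=m+1\ge d+2$; by pigeonhole $n_j\ge 2$ for some $j$. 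Put $z:=v_j\in\polytope\cap\Z^d$ and $y:=x-v_j\in\Z^d$. Then $y/m$ is the convex combination of the $v_i$ with weight $(n_j-1)/m$ on $v_j$ and $n_i/m$ on the others, so it lies in the relative interior of the face $\sigma':=\conv\{v_i:n_i>0\}$ of $\sigma$ — a face that also contains $\bar x$ in its relative interior. Since $\bar x\in\polytope^{\circ}$ and a linear functional attaining its maximum over $\sigma'$ at a relative-interior point is constant on $\sigma'$, no facet hyperplane of $\polytope$ meets $\operatorname{relint}(\sigma')$; thus $\operatorname{relint}(\sigma')\subseteq\polytope^{\circ}$, whence $y\in(m\polytope)^{\circ}\cap\Z^d$ and $x=y+z$ is the required decomposition.

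The main obstacle is this lemma, specifically the inclusion ``$\subseteq$'': peeling a single lattice point off $x$ without leaving the open dilate. The crucial points are (i) the pigeonhole step $\sum_in_i=m+1\ge d+2$ forcing some $n_j\ge2$, which is exactly where $r\ge\dim\polytope+1$ enters, and which is sharp — for $m=d$ one cannot always peel off one lattice point, as the unimodular $d$-simplex shows — and (ii) the observation that $x/(m+1)$ and $(x-v_j)/m$ land in the relative interior of the same face $\sigma'$ of $\sigma$, which transports interiority from the former to the latter. The remaining ingredients — that $\polytope\times Q$ is alcoved, that its codegree is $r$, and the factorization of dilates, interiors, convex hulls and Minkowski sums over a Cartesian product — are routine. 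If $\polytope$ is not full-dimensional in $\R^d$, one runs the whole argument inside $\operatorname{aff}(\polytope)$ with the induced lattice, with $\dim\polytope$ in the role of $d$.
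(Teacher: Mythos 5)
Your proof is correct, and it takes a genuinely different route through the only non\mbox{-}formal part of the argument. Both you and the paper reduce, via Proposition~\ref{al_level} and the obvious factorizations of interiors, convex hulls and Minkowski sums over a Cartesian product, to the single claim
\[
(k\polytope)^{(1)}=(r\polytope)^{(1)}+(k-r)\polytope\quad\text{for all integers }k\ge r,
\]
given only $r\ge\dim\polytope+1$. The paper proves this by citing a black-box theorem of Bruns, Gubeladze, and Trung (Theorem~\ref{di_level}: $k\polytope$ is level whenever $k\ge\dim\polytope+1$), applying Proposition~\ref{al_level} to $r\polytope$ to get the identity for $k$ in the arithmetic progression $r,2r,3r,\dots$, and then filling in the gaps with the interpolation Lemma~\ref{lem:min}. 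You instead prove the one-step recursion $\bigl((m+1)\polytope\bigr)^{\circ}\cap\Z^d=\bigl((m\polytope)^{\circ}\cap\Z^d\bigr)+\bigl(\polytope\cap\Z^d\bigr)$ for all $m\ge\dim\polytope+1$ directly, by a peeling argument on a unimodular triangulation: locate $x/(m+1)$ in an alcove $\sigma$, use unimodularity to get nonnegative integer barycentric coordinates summing to $m+1\ge d+2$, use pigeonhole to find a vertex $v_j$ with coordinate at least $2$, peel it off, and use the observation that $(x-v_j)/m$ lies in the relative interior of the same face of $\sigma$ as $x/(m+1)$ does to transport interiority. The argument is sound (the step showing $\operatorname{relint}(\sigma')\subseteq\polytope^{\circ}$ is the correct general fact that a supporting hyperplane of a polytope passing through a relative-interior point of a subsimplex must contain the whole subsimplex). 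Your route is longer on the page but more elementary and self-contained: it replaces an external citation and the technical Lemma~\ref{lem:min} with a transparent combinatorial proof, and it makes visible exactly where the hypothesis $r\ge\dim\polytope+1$ bites (the pigeonhole step) and why it is sharp (the unimodular simplex). The paper's route is shorter and reuses machinery already in place. Both are fine proofs of the same theorem.
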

	This results shows that --- under the right assumptions --- the product of a level polytope with a non-level polytope can indeed be guaranteed to be level.
	\begin{theorem*}[see Theorem~\ref{thm:prod2}]
		Let $\poset$ be a poset on $d$ elements and $\poset_1,\ldots,\poset_m$ the connected components of $\Pi$.
		If each $\poset_i$ is level, then $\poset$ is level.
	\end{theorem*}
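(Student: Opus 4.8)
The plan is to reduce the statement to a geometric fact about Cartesian products and then invoke Proposition~\ref{al_level}. The starting observation is that the order polytope of a disjoint union of posets is the Cartesian product of the order polytopes: an order-preserving map out of $\poset$ is exactly a tuple of order-preserving maps out of its connected components, so $\order(\poset) = \order(\poset_1) \times \cdots \times \order(\poset_m)$, and the analogous product decomposition holds after dilating by any positive integer and after passing to interiors, since the defining inequalities split along the components. Being an order polytope, $\order(\poset)$ is alcoved, so Proposition~\ref{al_level} applies to it as well as to each factor $\order(\poset_i)$. Peeling off one component at a time, it therefore suffices to treat the case $m=2$: assuming $P_1 := \order(\poset_1)$ and $P_2 := \order(\poset_2)$ are level, show that $P := P_1 \times P_2$ is level.

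First I would record the identity $(lP)^{(1)} = (lP_1)^{(1)} \times (lP_2)^{(1)}$, valid for every $l \ge 1$ because the interior of a product is the product of the interiors and because $\conv(A \times B) = \conv(A) \times \conv(B)$ for any sets $A,B$. In particular the left-hand side is nonempty exactly when both factors on the right are, which gives $\codeg(P) = \max\big(\codeg(P_1), \codeg(P_2)\big) =: r$.

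Next I would use the hypothesis. Write $r_i := \codeg(P_i)$, so $r_i \le r$. Since $P_i$ is level, Proposition~\ref{al_level} yields $(kP_i)^{(1)} = (r_iP_i)^{(1)} + (k-r_i)P_i$ for all $k \ge r_i$; specializing to $k=r$ and substituting back (using $(r-r_i)P_i + (k-r)P_i = (k-r_i)P_i$), this upgrades to $(kP_i)^{(1)} = (rP_i)^{(1)} + (k-r)P_i$ for all $k \ge r$. Feeding these two identities into the product formula from the first step, together with the coordinate-wise distributivity $(A+A') \times (B+B') = (A \times B) + (A' \times B')$, gives $(kP)^{(1)} = (rP)^{(1)} + (k-r)P$ for every $k \ge r$. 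By the ``if'' direction of Proposition~\ref{al_level}, $P$ is level, completing the induction. (Equivalently, one can run the same computation on canonical modules, since for an alcoved polytope the canonical module is spanned in each degree by the interior lattice points of the corresponding dilate.)

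I do not anticipate a serious obstacle: once the Cartesian-product description of $\order(\poset_1 \sqcup \poset_2)$ is in place, the argument is bookkeeping with Minkowski sums. The two points that need a little care are (i) the identity $\codeg(P) = \max(\codeg(P_1),\codeg(P_2))$, which is exactly where one needs that interiors of products are products of interiors, and (ii) the passage from the ``$k \ge r_i$'' form of the Minkowski identity for each $P_i$ to the uniform ``$k \ge r$'' form. It is worth contrasting this with Theorem~\ref{thm:prod}, where the first factor is \emph{not} assumed level and the argument genuinely requires the numerical hypothesis $\codeg(Q) \ge \dim P + 1$; here the levelness of every component makes the product structure alone sufficient, with no numerical condition needed.
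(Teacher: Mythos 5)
Your argument is correct, but it takes a different route from the paper's. The paper deduces Theorem~\ref{thm:prod2} directly from Theorem~\ref{thm:ProdOfLevel}: one observes that $\order(\poset)$ is the Cartesian product of the $\order(\poset_i)$, that order polytopes have the integer-decomposition property, and then repeatedly applies the equal-or-unequal-codegree cases of Theorem~\ref{thm:ProdOfLevel}, whose proof is a direct lattice-point computation in the cone (split an interior lattice point of $k(P\times Q)$ coordinate-wise, decompose each factor using levelness, and then use IDP to repartition heights so both pieces land at the same level). Your proof instead mirrors the paper's argument for Theorem~\ref{thm:prod}: it stays entirely in the language of the Minkowski-sum characterization of Proposition~\ref{al_level}, exploits that products of alcoved polytopes are alcoved and that $(lP)^{(1)} = (lP_1)^{(1)}\times(lP_2)^{(1)}$, upgrades the per-factor identity from $k\ge r_i$ to $k\ge r:=\max r_i$, and then distributes over the product. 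Both approaches ultimately rest on IDP --- yours implicitly via Corollary~\ref{cor:Alcoved_onto}, which makes Proposition~\ref{al_level}'s converse direction work, and the paper's explicitly as a hypothesis in Theorem~\ref{thm:ProdOfLevel}. The trade-off is that your argument is more geometric and arguably cleaner for this particular statement, but it is confined to alcoved polytopes; the paper's Theorem~\ref{thm:ProdOfLevel} is stated for arbitrary level polytopes (with IDP on the lower-codegree factor), so it gives a more flexible tool and lets the authors state a single product theorem that covers both this case and the counterexample in Remark~\ref{rem:ProdOfLevel}. Your side remark that the upgrade from $k\ge r_i$ to $k\ge r$ needs a little care is apt, and your justification via $(r-r_i)P_i+(k-r)P_i=(k-r_i)P_i$ (a fact about Minkowski sums of dilates of a convex set, not requiring any lattice structure) is sound.
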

	This theorem tells us that in order to guarantee levelness of a poset it is sufficient to show that all components are level. More generally, for Cartesian products of level polytopes we have:
	\begin{theorem*}[see Theorem~\ref{thm:ProdOfLevel}]
		Let $P \subset \R^d$ and $Q\subset \R^e$ be level polytopes. If either
		\begin{enumerate}
			\item $\codeg(Q)< \codeg (P)$ and $Q$ has the integer-decomposition property,
			\item $\codeg(P)< \codeg (Q)$ and $P$ has the integer-decomposition property,
			\item or if $\codeg(Q) = \codeg (P)$,
		\end{enumerate}
		then $P \times Q$ is level. 
	\end{theorem*}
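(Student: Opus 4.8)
The plan is to reduce the statement to a short fact about Segre products of graded modules, the one extra ingredient being a single application of the integer-decomposition property. First I would set up the algebraic dictionary. Write $E(P)=\bigoplus_{k\ge 0}E(P)_k$ for the Ehrhart ring of $P$, where $E(P)_k$ has $\k$-basis the monomials indexed by $kP\cap\Z^d$, and similarly $E(Q)$ and $E(P\times Q)$. Since $k(P\times Q)\cap\Z^{d+e}=(kP\cap\Z^d)\times(kQ\cap\Z^e)$ and this bijection is compatible with multiplication, $E(P\times Q)$ is the Segre product $E(P)\ast E(Q)$, i.e. $E(P\times Q)_k=E(P)_k\otimes_\k E(Q)_k$ with $(a\otimes b)(a'\otimes b')=aa'\otimes bb'$. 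Likewise $\int\bigl(k(P\times Q)\bigr)=\int(kP)\times\int(kQ)$, so $\codeg(P\times Q)=\max\bigl(\codeg(P),\codeg(Q)\bigr)=:r$; and, because the Ehrhart ring of a lattice polytope is a normal affine semigroup ring, hence Cohen--Macaulay, with canonical module spanned by the interior lattice points (Danilov--Stanley), the canonical module of $P\times Q$ is the Segre product $\omega_P\ast\omega_Q$, that is $(\omega_{P\times Q})_k=(\omega_P)_k\otimes_\k(\omega_Q)_k$ as a module over $E(P)\ast E(Q)$. Finally, since $(\omega_P)_k=0$ for $k<\codeg(P)$ while $(\omega_P)_k\ne 0$ for all $k\ge\codeg(P)$ (add an interior lattice point of $\codeg(P)\,P$ to lattice points of $P$), levelness of $P$ is equivalent to $(\omega_P)_k=E(P)_{k-\codeg(P)}\cdot(\omega_P)_{\codeg(P)}$ for every $k\ge\codeg(P)$, and similarly for $Q$ and for $P\times Q$. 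So the goal becomes: $(\omega_{P\times Q})_k=E(P\times Q)_{k-r}\cdot(\omega_{P\times Q})_r$ for all $k\ge r$ (together with $(\omega_{P\times Q})_k=0$ for $k<r$, which is automatic since one of the factors vanishes in such degrees).

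The core step is an elementary observation about Segre products. Suppose $M$ is a graded $A$-module with $M_k=A_{k-r}M_r$ for all $k\ge r$ and $N$ a graded $B$-module with $N_k=B_{k-r}N_r$ for all $k\ge r$. Then $(M\ast N)_k=(A\ast B)_{k-r}(M\ast N)_r$ for all $k\ge r$: indeed $(M\ast N)_k=M_k\otimes_\k N_k=(A_{k-r}M_r)\otimes_\k(B_{k-r}N_r)$ is spanned by the elements $(a\mu)\otimes(b\nu)=(a\otimes b)(\mu\otimes\nu)$ with $a\in A_{k-r}$, $b\in B_{k-r}$, $\mu\in M_r$, $\nu\in N_r$, and here $a\otimes b\in(A\ast B)_{k-r}$ and $\mu\otimes\nu\in M_r\otimes_\k N_r=(M\ast N)_r$; the reverse inclusion holds for degree reasons.

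It then remains to check, in each of the three cases, that $\omega_P$ and $\omega_Q$ both satisfy the hypothesis of the core step with $r=\codeg(P\times Q)$. In case (3), $r=\codeg(P)=\codeg(Q)$, and the hypothesis is exactly levelness of $P$ and of $Q$. In case (1), $r=\codeg(P)>\codeg(Q)$; levelness of $P$ gives the hypothesis for $\omega_P$ at once. For $\omega_Q$, levelness of $Q$ gives $(\omega_Q)_k=E(Q)_{k-\codeg(Q)}(\omega_Q)_{\codeg(Q)}$ for $k\ge\codeg(Q)$; since $Q$ has the integer-decomposition property, $E(Q)$ is standard graded, so $E(Q)_{k-\codeg(Q)}=E(Q)_{k-r}\,E(Q)_{\,r-\codeg(Q)}$ for $k\ge r$ (the exponents add up, both being nonnegative), whence $(\omega_Q)_k=E(Q)_{k-r}\bigl(E(Q)_{\,r-\codeg(Q)}(\omega_Q)_{\codeg(Q)}\bigr)=E(Q)_{k-r}(\omega_Q)_r$, using levelness of $Q$ once more in the last equality. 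Case (2) is symmetric, with the roles of $P$ and $Q$ exchanged. Applying the core step with $A=E(P)$, $B=E(Q)$, $M=\omega_P$, $N=\omega_Q$ then shows that $\omega_{P\times Q}$ is generated in the single degree $r=\codeg(P\times Q)$, i.e. $P\times Q$ is level.

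I expect the only delicate part to be the algebraic dictionary of the first paragraph rather than any of the combinatorics: one must be sure that the Ehrhart ring of a product is the Segre product (not the tensor product) of the Ehrhart rings, that its canonical module is correspondingly the Segre product of the canonical modules of the factors, and that the reformulation of levelness as ``$(\omega_P)_k=E(P)_{k-\codeg(P)}(\omega_P)_{\codeg(P)}$ for all $k\ge\codeg(P)$'' is legitimate (this rests on Cohen--Macaulayness of Ehrhart rings together with the Danilov--Stanley description of the canonical module). Once those identifications are in place, and once the integer-decomposition property has been used the single time needed to push the generators of the lower-codegree factor up to the common degree $r$, everything else is the one-line linear algebra of the core step.
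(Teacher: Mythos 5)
Your proof is correct and follows essentially the same route as the paper: use levelness of each factor to write each interior lattice point of $k(P\times Q)$ as an interior point in degree $\codeg$ plus a cone point, then use the integer-decomposition property of the lower-codegree factor to bridge the gap between $\codeg(Q)$ and $r=\codeg(P\times Q)$. The only difference is one of packaging---you phrase the argument in terms of Segre products of graded modules and a general lemma about generation degree, where the paper manipulates the lattice-point coordinates directly---but the three ingredients (factorization of interior points, levelness of each factor, IDP of the smaller-codegree factor) are identical.
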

	The assumptions are indeed necessary. In Remark~\ref{rem:ProdOfLevel}, we give an explicit example of two level polytopes whose product is not level.

	The structure of this paper is as follows: In Section~\ref{sec:Background}, we recall the basics of Ehrhart theory relevant to this paper, we introduce posets, order polytopes, and chain polytopes, and we show how these relate to combinatorial, commutative algebra. In Section~\ref{sec:Miyazaki}, we recall Miyazaki's results on level posets. We then give an alternative characterization of level posets using weighted digraphs. This is done in Section~\ref{sec:NewAlgo}. In Section~\ref{sec:NecessaryCondition}, we show that this characterization generalizes a necessary condition of Ene, Herzog, Hibi, and Saeedi. In Section~\ref{sec:SeriesParallelPosets}, we use this characterization to examine levelness of series-parallel posets. In the last section, we examine levelness of alcoved polytopes and examine when certain products of polytopes are again level.

	\rem{
		level algebras by Stanley~\cite[p.~54]{Stanley-CM-complexes}
		generalizing Gorenstein -- very subtle (e.g., not just Hilbert~\cite{HibiStraightening}), desirable enumerative
		consequences
		
		levelness property of the semigroup algebras of order polytopes 
		has recently come into focus~\cite{EneHerzogHibi} \cite{Miyazaki}

		mention stanley's work, hibi herzog ene et al
		miyazaki}
	\section{Acknowledgements}
	The authors would like to thank Francisco Santos for valuable feedback and constructive criticism while reading a draft of this article.	The second author was supported by a scholarship of the Berlin Mathematical School. He would also like to explicitly thank Takayuki Hibi and the third author for organizing the Workshop on Convex Polytopes for Graduate Students at Osaka University in January 2017, which is where this work began. 
	\section{Background and Notation}
	\label{sec:Background}
	\subsection{Ehrhart Theory and Lattice Polytopes}
	
	Polytopes correspond to solutions of finitely many linear inequalities, where the set of solutions is bounded. Integral solutions to these inequalities correspond to integer points in polytopes. Counting the number of these solutions leads directly to Ehrhart theory, which is the study of (the number of) integer points in a given polytope. In this subsection, we  give a brief and incomplete introduction to this beautiful area. We refer the interested reader to the excellent books~\cite{CCD,Ziegler}.
	
	A \emph{polytope} $\polytope \subset \R^d$ is the convex hull of finitely many points $\u_1$, $\u_2$, $\dots$, $\u_r \in \R^d$, i. e., 
	\[
	\polytope = \conv \{\u_1,\u_2,\dots, \u_r \} := \left\{\sum_{i=1}^r \lambda_i \u_i \colon \, \sum_{i=1}^r\lambda_i = 1, \,\lambda_i \geq 0 \text{ for all }i  \right\}.
	\]
	The inclusion-minimal set $\{\v_1,\v_2,\dots,\v_s \}$ such that $\polytope = \conv\{\v_1,\dots,\v_s \}$ is called the \emph{vertex set of $\polytope$} and its elements are called the \emph{vertices}. A polytope whose vertex set is contained in $\Z^d$, is called a \emph{lattice polytope}. We define the dimension of a polytope to be the dimension $n$ of its affine span. We then call $\polytope$  an $n$-polytope. We remark that some authors reserve the term lattice polytope for polytopes whose vertices are contained in a general lattice $\Lambda$. However, we will only consider the case where $\Lambda = \Z^e$ for appropriate $e$. 
	\begin{definition}
		\label{def:Ehrhart polynomial}
		Let $\polytope \subset \R^d$ be a $d$-polytope. We define the \emph{Ehrhart function $\ehr_{\polytope} \colon \Z_{\geq 0} \rightarrow \Z_{\geq 0}$}
		\[
		\ehr_{\polytope} (k) = \# k\polytope\cap \Z^d.
		\] 
		When $P$ is a lattice polytope, we call $\ehr_{\polytope}$ the \emph{Ehrhart polynomial of $\polytope$}. The \emph{Ehrhart series} of a lattice polytope $\polytope$ is the formal power series
		\[
		\Ehr_{\polytope} (z) = 1 + \sum_{k \geq 1} \ehr_{\polytope} (k) z^k .
		\]
	\end{definition}
	
	Ehrhart~\cite{Ehrhart} famously proved that the Ehrhart function is a quasipolynomial if the vertices of ${\polytope}$ are rational and it is even a polynomial if the vertices are lattice points. As a direct consequence, the Ehrhart series can be written as a rational function
	\[
	\Ehr_P (z)=\frac{h^{\ast}_0 + \dots+h^{\ast}_d z^d}{(1-z)^{d+1}}
	\]
	and the numerator of this rational function is called the \emph{$h^{\ast}$-polynomial of ${\polytope}$}.
	We can actually say more about the coefficients of the numerator polynomial, see~\cite{StanleyNonnegativity}:
	\begin{theorem}[Stanley's nonnegativity theorem]
		\label{thm:Stanleynonnegativity}
		Let ${\polytope}$ be a lattice $d$-polytope. Then
		\[
		\Ehr_{\polytope} (z) = \frac{h^{\ast}_0 + \dots+h^{\ast}_d z^d}{(1-z)^{d+1}},
		\]
		where $h^{\ast}_0, h^{\ast}_1, \dots, h^{\ast}_d \in \Z_{\geq 0}$.
	\end{theorem}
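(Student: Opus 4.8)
The plan is to reduce to the case of a lattice simplex and there to read off the $h^\ast$-polynomial explicitly. First, fix a regular triangulation $T$ of $\polytope$ using only the vertices of $\polytope$ (for instance a pulling triangulation), so that every maximal cell is a lattice $d$-simplex; being regular, $T$ is a shellable $d$-ball, and we fix a shelling $\sigma_1,\dots,\sigma_m$ of it. Deleting from each $\sigma_j$ the relatively open facets it shares with $\sigma_1\cup\dots\cup\sigma_{j-1}$ produces half-open lattice simplices $H_1,\dots,H_m$ that partition $\polytope$. Ehrhart-type counting is additive over such a half-open decomposition, so it suffices to prove that each $H_j$ contributes a summand $q_{H_j}(z)/(1-z)^{d+1}$ to $\Ehr_\polytope(z)$ with $q_{H_j}\in\Z_{\geq 0}[z]$ of degree at most $d$; summing then yields $\Ehr_\polytope(z)=\big(\sum_j q_{H_j}(z)\big)/(1-z)^{d+1}$ with nonnegative integer numerator of degree at most $d$, which is exactly the asserted form.

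For the simplex computation, write a half-open cell as $H=\conv(v_0,\dots,v_d)\setminus\bigcup_{i\in I}F_i$, with $F_i$ the facet opposite $v_i$, and set $w_i=(1,v_i)\in\Z^{d+1}$. The cone $\cone(w_0,\dots,w_d)$ is simplicial, graded by its first coordinate (each $w_i$ lies at height $1$), and the lattice points of the half-open cone over $H$ decompose uniquely as $p+\sum_i c_i w_i$ with $c_i\in\Z_{\geq 0}$ and $p$ in the half-open fundamental parallelepiped $\Pi_H=\{\sum_i\lambda_i w_i : 0<\lambda_i\leq 1\text{ for }i\in I,\ 0\leq\lambda_i<1\text{ for }i\notin I\}$. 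Counting lattice points by height gives
\[
q_{H}(z)=\sum_{p\in\Pi_H\cap\Z^{d+1}}z^{\deg p},
\]
which manifestly has nonnegative integer coefficients, and since $\deg p=\sum_i\lambda_i$ its exponents lie in $[0,d+1)$, so $q_H$ has degree at most $d$ (and no constant term once some facet has been removed, which is what makes the reassembly match $h^\ast_0=1$).

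The point that genuinely needs care is this last degree bound: $\deg p$ would reach $d+1$ only via the point $\sum_i w_i\in\Pi_H$, which belongs to $\Pi_H$ exactly when $I=\{0,\dots,d\}$, i.e.\ when every facet of $\sigma_j$ has been deleted because it lies in the earlier cells. This cannot occur in a shelling of a ball: the restriction $\mathcal R(\sigma_j)$ — the union of facets of $\sigma_j$ contained in $\sigma_1\cup\dots\cup\sigma_{j-1}$ — is always a proper subcomplex of $\partial\sigma_j$, since $\mathcal R(\sigma_j)=\partial\sigma_j$ would make $\sigma_1\cup\dots\cup\sigma_j$ a shellable $d$-sphere, which cannot sit inside the $d$-ball underlying $T$ (see \cite{Ziegler} for the relevant shelling theory). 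Hence $|I|\leq d$ in every piece and the argument closes. A second route, and historically the original one, avoids triangulations entirely: $\cone(\{1\}\times\polytope)\cap\Z^{d+1}$ is a normal affine semigroup, so its semigroup ring is Cohen--Macaulay by Hochster's theorem, and the Hilbert series of a graded Cohen--Macaulay ring modulo a degree-one homogeneous system of parameters is a polynomial with nonnegative integer coefficients; the half-open decomposition above is one concrete way to produce such a system of parameters.
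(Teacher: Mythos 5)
The paper does not prove this theorem: it cites it as an external result (\cite{StanleyNonnegativity}), so there is no internal proof to compare against. That said, your argument is correct and is the standard combinatorial proof. Decomposing a shellable lattice triangulation into half-open simplices $H_j$, each of which contributes $q_{H_j}(z)/(1-z)^{d+1}$ with $q_{H_j}\in\Z_{\geq0}[z]$, is exactly the Beck--Sottile-style argument, and the one place that really requires care --- that $\deg p$ never reaches $d+1$, i.e.\ that no cell has all $d+1$ facets deleted --- you handle correctly: $\mathcal{R}(\sigma_j)=\partial\sigma_j$ would force a closed $d$-pseudomanifold inside a triangulated $d$-ball at stage $j$, which is impossible (either it caps off a $d$-cycle, contradicting $H_d(\text{ball})=0$, or a later facet would give a ridge in three maximal cells). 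Two minor presentation points: you should also record that $|I|\geq 1$ for $j\geq 2$ and $|I|=0$ only for $j=1$, which is what makes $h^\ast_0=1$ exactly rather than just a parenthetical aside; and the existence of a \emph{shellable} lattice triangulation should be flagged as a nontrivial input (regular triangulations of polytopes are shellable, which you implicitly use via ``regular''). Your closing paragraph correctly identifies the other route --- Hochster's theorem plus a linear system of parameters for the Cohen--Macaulay semigroup ring $\k[\polytope]$ --- which is Stanley's original 1980 proof and is the one the paper's background (Theorems~\ref{thm:ch1Hochster} and~\ref{thm:ch1HilbertSerre}) gestures toward; the triangulation argument has the advantage of being self-contained and elementary, while the algebraic one generalizes to arbitrary Cohen--Macaulay graded rings and is what the paper needs later anyway for the levelness discussion.
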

	The non-negativity of the coefficients of the $h^{\ast}$-polynomial is deeply related to what is called the Cohen--Macaulay property. We now define the \emph{degree} and \emph{codegree} of a lattice polytope.
	\begin{definition}
		\label{def:DegAndCodeg}
		Let ${\polytope}$ be a lattice $d$-polytope with $h^{\ast}$-polynomial $ h^{\ast}_0 + \dots+h^{\ast}_d z^d$. The \emph{degree of ${\polytope}$} is defined as
		\[
		\operatorname{deg} ({\polytope}) = \max \{k \colon h^{\ast}_k \neq 0 \}.
		\]
		The \emph{codegree of ${\polytope}$} is defined as
		\[
		\codeg(P) = d+1 - \operatorname{deg}({\polytope}).
		\]
	\end{definition}
	
	\begin{remark}
		\label{rem:CodegreeInterpretation}
		The codegree of a lattice $d$-polytope ${\polytope}$ is the smallest positive integer $c$ such that $c{\polytope}$ contains an interior integer point. This is a consequence of a reciprocity theorem relating the Hilbert series of $\k[\polytope]$ to the Hilbert series of the canonical module $\omega_{\k[\polytope]}$, see for instance \cite[Thm. 6.41]{BrunsGubeladze}.
	\end{remark}

	\subsection{Two Poset Polytopes}
	A \emph{\textbf{p}artially \textbf{o}rdered \textbf{set} }(or poset) $(\poset, \leq_{\poset})$ is a set $\poset$ together with a binary relation
	$\leq_{\poset}$ that is reflexive, antisymmetric, and transitive. The
	relation $\leq_{\poset}$ is called a \emph{partial order} and when there
	is no confusion about the poset, we simply write $\leq$. An element $j
	\in \poset$ is said to \emph{cover} an element $i \in \poset$, denoted $j\gtrdot i$, if $i \leq k \leq j $ implies that either $i=k$ or $j=k$. One can recover all partial orders from these cover relations. Therefore, it's convenient to illustrate the poset using these cover relations by a \emph{Hasse diagram}, see Figure~\ref{fig:HasseDiag}.
	\begin{figure}[h]
		\center
		\begin{tikzpicture}[darkstyle/.style={circle,draw,fill=gray!40,minimum size=20}, scale = .5]
		\node[circle,draw=black,fill=white!80!black,minimum size=20, scale=.4] (1) at (4,0) {k };
		\node[circle,draw=black,fill=white!80!black,minimum size=20, scale=.4] (2) at (0,0) { i};
		\node[circle,draw=black,fill=white!80!black,minimum size=20, scale=.4] (3) at (2,2) { j};
		\draw[thick] (1)--(3)--(2);
		\end{tikzpicture}
		\caption{The Hasse diagram of the poset $i \lessdot j \gtrdot k$.}
		\label{fig:HasseDiag}
	\end{figure}
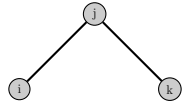
	
	Given a poset $\poset$, we define the poset $\posetTop = (\poset \cup \{\infty \}, \leq_{\posetTop})$, where 
	\[
	i <_{\posetTop} j :\Longleftrightarrow
	\begin{cases}
	j = \infty \text{ and } i\in \Pi, \\
	i <_{\poset} j.
	\end{cases}
	\]
	Similarly, we define $\posetBottom = (\poset \cup \{-\infty \}, \leq_{\posetBottom})$, where
	\[
	i <_{\posetBottom} j:\Longleftrightarrow
	\begin{cases}
	i = -\infty \text{ and } j\in \Pi, \\
	i <_{\poset}j .
	\end{cases}
	\]

	To every finite poset, Stanley associated two geometric objects,
	namely the order polytope and the chain polytope:
	\begin{definition}[\cite{StanleyOrderPoly}, Def. 1.1]
		\label{def:orderpolytope}
		The \emph{order polytope $\order(\poset)$} of a finite poset $\poset$
		is the subset of $\R^{\poset} = \{f\colon \poset \rightarrow \R \}$ defined
		by
		\begin{align*}
			&0 \leq f(i) \leq 1  &\qquad \text{for all } i\in \poset, \\
			&f(i)\leq f(j)      &\qquad \text{if } i\leq_{\poset} j.\\
		\end{align*}
	\end{definition}
	\begin{definition}[\cite{StanleyOrderPoly}, Def. 2.1]
		\label{def:chainpolytope}
		The \emph{chain polytope $\chain(\poset)$} of a finite poset $\poset$ is the subset of $\R^{\poset} = \{g\colon \poset \rightarrow \R \}$ defined by the conditions
		\begin{align*}
			&0 \leq g(i)   &\qquad \text{for all } i\in \poset, \\
			&g(i_1) + g(i_2)+ \dots g(i_k)\leq 1      &\qquad \text{for all chains } i_1 <_{\poset} i_2 <_{\poset} \dots <_{\poset} i_k \text{ of }\poset.\\
		\end{align*}
	\end{definition}
	
	\begin{remark}
		In the following, we will use an isomorphism $\R^{\poset} \cong \R^{\# \poset}$ to make notation better.
	\end{remark}
	\begin{figure}[h]
		\centering
		\begin{subfigure}{.5\textwidth}
			\centering
			\includegraphics[width=.4\linewidth]{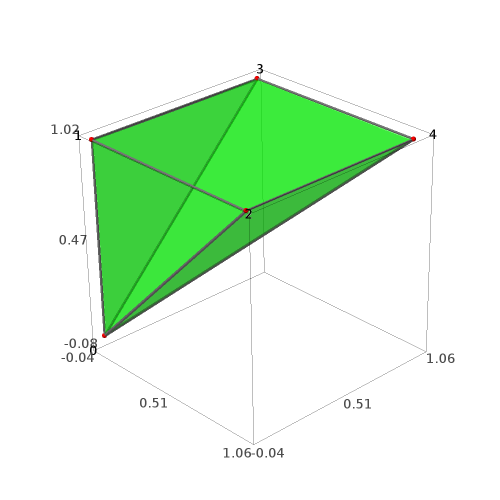} 
			
			\label{fig:OrderPoly}
		\end{subfigure}%
		\begin{subfigure}{.5\textwidth}
			\centering
			\includegraphics[width=.4\linewidth]{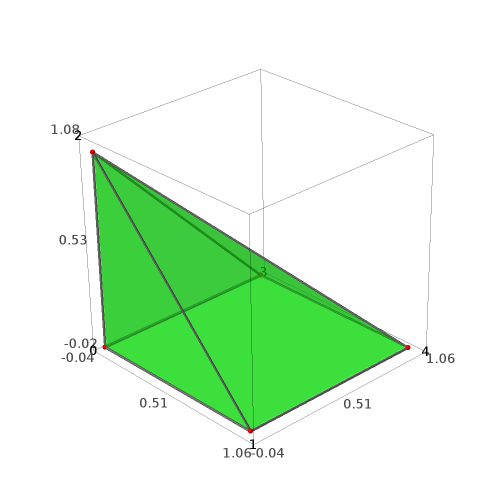} 
			
			\label{fig:ChainPoly}
		\end{subfigure}
		\caption{The order and chain polytope of the poset described in Figure~\ref{fig:HasseDiag}.}
		\label{fig:OrderAndChainPoly}
	\end{figure}
	
	We define an \emph{order filter $F$ } of a poset $\poset$ to be a subset $F\subset \poset$ such that if $i \in F$ and $i<j$, then $j\in F$. To every filter $F$, one can associate a \emph{characteristic function} $\indicator F$ defined as
	\[
	\indicator F (i) : = \begin{cases} 1 &\text{ if } i\in F,\\
	0 &\text{ otherwise.}\\
	
	\end{cases}
	\]
	Stanley showed that vertices of $\order(\poset)$ are given by the characteristic functions of order filters.
	\begin{corollary}[{\cite[Cor. 1.3]{StanleyOrderPoly}}]
		\label{cor:VerticesOrderPoly}
		The vertices of $\order(\poset)$ are the characteristic functions  $\indicator F $ of order filters $F$. In particular, the number of vertices equals the number of order filters.
	\end{corollary}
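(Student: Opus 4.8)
The plan is to show that the vertices of $\order(\poset)$ are exactly the $\{0,1\}$-valued points lying in it, and then to observe that these $\{0,1\}$-valued points are precisely the characteristic functions $\indicator F$ of order filters $F$; the asserted equality of cardinalities is then immediate from the bijection $F \mapsto \indicator F$.

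The first step is the easy containment: for any order filter $F$, the point $\indicator F$ lies in $\order(\poset)$, since $0 \le \indicator F(i) \le 1$ always and $i \le_\poset j$ with $i \in F$ forces $j \in F$ by the filter property, so $\indicator F(i) \le \indicator F(j)$ in every case. As $\indicator F$ is a vertex of the cube $[0,1]^\poset \supseteq \order(\poset)$ and it belongs to $\order(\poset)$, it is automatically a vertex of $\order(\poset)$ --- concretely, it is the unique point of $[0,1]^\poset$, hence of $\order(\poset)$, maximizing the linear functional $x \mapsto \sum_{i \in F} x_i - \sum_{i \notin F} x_i$.

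For the converse I would rule out any $f \in \order(\poset)$ having a coordinate in the open interval $(0,1)$ from being a vertex, by a layer-cake decomposition. The key observation is that for every threshold $t$ the upper level set $U_t := \{ i \in \poset : f(i) \ge t \}$ is itself an order filter, because $i \in U_t$ and $i \le_\poset j$ give $f(j) \ge f(i) \ge t$. Writing the distinct values of $f$ as $0 \le a_1 < \dots < a_k \le 1$ and setting $a_0 := 0$, one has
\[
f \;=\; \sum_{m=1}^{k} (a_m - a_{m-1})\, \indicator{U_{a_m}} \;+\; (1-a_k)\, \indicator{\emptyset},
\]
a convex combination of points of $\order(\poset)$: the coefficients are nonnegative and sum to $1$, and the sets $U_{a_1} \supsetneq \dots \supsetneq U_{a_k} \supsetneq \emptyset$ are strictly nested since each value $a_m$ is attained. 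A short check shows this combination is trivial (only one term with positive coefficient) exactly when $f$ is already $\{0,1\}$-valued; hence whenever $f$ has a coordinate in $(0,1)$ it is a nontrivial convex combination of at least two distinct points of $\order(\poset)$, and so is not a vertex.

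Putting the two steps together, every vertex $v$ of $\order(\poset)$ is $\{0,1\}$-valued, and then $F := \{ i : v(i) = 1 \} = U_1$ is an order filter with $v = \indicator F$; conversely each $\indicator F$ is a vertex, and $F \mapsto \indicator F$ is a bijection between order filters and vertices, which gives the count. There is no serious obstacle here; the one place deserving a line of care is verifying that the layer-cake combination is genuinely nontrivial precisely when $f \notin \{0,1\}^\poset$, which follows from the strict nesting of the level sets together with a brief case analysis on the number of distinct values.
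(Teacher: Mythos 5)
Your proof is correct. The paper itself states this as a cited result from Stanley~\cite{StanleyOrderPoly} and gives no proof, so there is nothing to compare against here; your layer-cake (level-set) decomposition $f = \sum_m (a_m - a_{m-1})\indicator{U_{a_m}} + (1-a_k)\indicator{\emptyset}$ is the standard argument, your verification that the level sets $U_t$ are filters is right, and your observation that any $\{0,1\}$-point of a polytope contained in the cube is automatically a vertex (via the linear functional $\sum_{i \in F}x_i - \sum_{i\notin F}x_i$) cleanly handles the other containment.
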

	
	Stanley also gave the vertex description of chain polytopes. We define an \emph{antichain $A$} of a poset $\poset$ to be a subset $A \subset \poset$ of pairwise incomparable elements. The characteristic function $\indicator A$ of an antichain $A$ is defined similarly to the characteristic function of an order filter.
	\begin{theorem}[{\cite[Thm 2.2]{StanleyOrderPoly}}]
		\label{thm:VerticesChainPoly}
		The vertices of $\chain(\poset)$ are given by the characteristic functions $\indicator A$ of antichains $A$. In particular, the number of vertices of $\chain(\poset)$ equals the number of antichains of $\poset$.
	\end{theorem}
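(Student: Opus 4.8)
The plan is to deduce this from the vertex description of the order polytope in Corollary~\ref{cor:VerticesOrderPoly} by means of Stanley's transfer map, after first settling the easy direction by hand. Note that $\chain(\poset)\subseteq[0,1]^{\poset}$, since each singleton $\{i\}$ is a chain and the inequality it contributes is exactly $g(i)\le 1$. If $A\subseteq\poset$ is an antichain, then $\indicator A\in\chain(\poset)$: it is nonnegative, and for every chain $C$ we have $\#(C\cap A)\le 1$ because a chain and an antichain share at most one element, so $\sum_{i\in C}\indicator A(i)=\#(C\cap A)\le 1$. As $\indicator A$ is a $0/1$ point of the cube $[0,1]^{\poset}$, whose vertices are precisely its $0/1$ points, and as $\chain(\poset)$ sits inside that cube, each $\indicator A$ is automatically a vertex of $\chain(\poset)$; distinct antichains give distinct points. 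It therefore remains to rule out any further vertices.

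For that I would use the transfer map $\Phi\colon\R^{\poset}\to\R^{\poset}$, $\Phi(f)(i)=f(i)-\max\{f(j):j\lessdot i\}$, with the maximum understood to be $0$ when $i$ is minimal, together with its inverse candidate $\Psi(g)(i)=\max\{g(i_1)+\dots+g(i_k):i_1<\dots<i_k=i\}$. The routine verifications are that $\Phi$ maps $\order(\poset)$ into $\chain(\poset)$ --- nonnegativity is immediate from monotonicity of $f$, and for a chain $i_1<\dots<i_k$ a telescoping estimate gives $\sum_m\Phi(f)(i_m)\le f(i_k)\le 1$, using that the element just below $i_m$ on a saturated chain from $i_{m-1}$ to $i_m$ is covered by $i_m$ and has $f$-value at least $f(i_{m-1})$ --- that $\Psi$ maps $\chain(\poset)$ into $\order(\poset)$, and that $\Phi\circ\Psi=\Psi\circ\Phi=\mathrm{id}$, so $\Phi$ is a piecewise-linear homeomorphism $\order(\poset)\to\chain(\poset)$. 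The decisive point is that $\Phi$ is linear on each chamber $C_\pi=\{f:0\le f(\pi_1)\le\dots\le f(\pi_d)\le 1\}$ indexed by a linear extension $\pi$ of $\poset$, where the inner maximum is attained at a fixed lower cover, so that $\Phi|_{C_\pi}$ is a bijective, in fact unimodular, linear map; the chambers $C_\pi$ triangulate $\order(\poset)$ into lattice simplices whose vertices are exactly the filter indicators $\indicator F$. Hence $\Phi$ carries this triangulation to a triangulation of $\chain(\poset)$ whose vertex set is $\{\Phi(\indicator F):F\text{ an order filter}\}$, and since the vertices of a polytope always occur among the vertices of any triangulation of it, every vertex of $\chain(\poset)$ has the form $\Phi(\indicator F)$.

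Finally I would compute $\Phi$ on filter indicators: if $i\notin F$ then no lower cover of $i$ lies in $F$ (a member of $F$ strictly below $i$ would force $i\in F$, as $F$ is upward closed), so $\Phi(\indicator F)(i)=0$; if $i\in F$ then $\Phi(\indicator F)(i)=1-\max\{\indicator F(j):j\lessdot i\}$ is $1$ exactly when no lower cover of $i$ lies in $F$, i.e.\ when $i$ is a minimal element of $F$, and is $0$ otherwise. Thus $\Phi(\indicator F)=\indicator{\min(F)}$, where $\min(F)$ --- the set of minimal elements of $F$ --- is an antichain; and since every antichain $A$ equals $\min(F)$ for exactly one filter $F$, namely the filter it generates, $F\mapsto\min(F)$ is a bijection from filters onto antichains. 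Together with the first two paragraphs this shows that the vertices of $\chain(\poset)$ are precisely the $\indicator A$ with $A$ an antichain. I expect the main obstacle to be the claim in the second paragraph that $\Phi$ takes vertices to vertices: a piecewise-linear homeomorphism need not do this in general, and the substance of the argument is precisely that the linear pieces of $\Phi$ fit together into a common unimodular triangulation of $\order(\poset)$ and $\chain(\poset)$. One could instead bypass the order polytope and show directly that $\chain(\poset)=\conv\{\indicator A:A\text{ an antichain}\}$ via linear programming duality, where for nonnegative weights the maximum of a linear functional over $\chain(\poset)$ equals the maximum weight of an antichain by the weighted form of Dilworth's theorem (ordinary Dilworth applied to the poset obtained by replacing each element $i$ with an antichain of $c_i$ copies).
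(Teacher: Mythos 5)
Your proof is correct, but it does not follow the cited source: the paper itself gives no proof of this statement, citing it directly from Stanley's \emph{Two Poset Polytopes}, where the vertex description of $\chain(\poset)$ is established in Section~2 by a self-contained argument while the transfer map $\Phi$ only appears in Section~3. Your route therefore reorganizes the logic, deducing the chain-polytope vertices from Corollary~\ref{cor:VerticesOrderPoly} via $\Phi$. This is not circular---nothing in the construction or invertibility of $\Phi$ uses the vertex description of $\chain(\poset)$---and it buys you, as byproducts, both the piecewise-linear unimodular homeomorphism $\order(\poset)\to\chain(\poset)$ (Stanley's Theorem~3.2) and the explicit bijection $F\mapsto\min(F)$ between order filters and antichains, which in Stanley's order of presentation have to be argued separately. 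The step you correctly flag as the crux is sound: on each chamber $C_\pi$ the max defining $\Phi$ is attained at a fixed lower cover, making the matrix of $\Phi|_{C_\pi}$ unitriangular in the order $\pi$, hence unimodular; so $\Phi$ carries the staircase triangulation $\{C_\pi\}$, $\pi$ a linear extension, simplex-by-simplex onto a covering of $\chain(\poset)$ by lattice simplices, and a vertex of $\chain(\poset)$ contained in such a simplex must be one of its vertices. Your telescoping verification that $\Phi(\order(\poset))\subseteq\chain(\poset)$ is complete once one records the two facts you allude to: for $m\ge 2$, $\max\{f(j):j\lessdot i_m\}\ge f(i_{m-1})$ by passing through a saturated chain from $i_{m-1}$ to $i_m$, and $\Phi(f)(i_1)\le f(i_1)$ because $f\ge 0$, whence the sum collapses to at most $f(i_k)\le 1$. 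The LP-duality/Dilworth alternative you sketch at the end is another valid direct route and is closer in spirit to a from-scratch integrality argument for $\chain(\poset)$; in short, your proof is correct, uses more machinery than Stanley's original, and in exchange yields extra structural information.
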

	
	Let $\poset$ be a $d$-element poset and let $m\in \Z_{\geq 1}$. We define $\Omega (\poset,m)$ to be the number of \emph{order-preserving maps} $\poset \to \{1,2,\dots,m \}$, where we say that a map $f$ is \emph{order preserving} if $i \leq_{\poset} j$ implies $f(i) \leq f(j)$. These order-preserving maps correspond to integer points in dilates of the order polytope as the next theorem shows:
	\begin{theorem}[{\cite[Thm. 4.1]{StanleyOrderPoly}}]
		\label{thm:EhrhartPolyofOrder}
		The Ehrhart polynomials of $\order(\poset)$ and $\chain (\poset)$ are given by
		\[
		\ehr_{\order(\poset)}(k) = \ehr_{\chain(\poset)}(k) = \Omega(\poset, k+1).
		\]
	\end{theorem}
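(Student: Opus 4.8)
The statement splits into two equalities: $\ehr_{\order(\poset)}(k)=\Omega(\poset,k+1)$, which is essentially tautological, and $\ehr_{\chain(\poset)}(k)=\ehr_{\order(\poset)}(k)$, which I would deduce from Stanley's transfer map. For the first equality, rescaling the inequalities of Definition~\ref{def:orderpolytope} shows $k\order(\poset)=\{f\colon\poset\to\R \mid 0\le f(i)\le k \text{ for all }i\in\poset,\ f(i)\le f(j)\text{ whenever }i\le_\poset j\}$, so a point of $k\order(\poset)\cap\Z^{\poset}$ is exactly an order-preserving map $\poset\to\{0,1,\dots,k\}$; post-composing with $t\mapsto t+1$ identifies these bijectively with the order-preserving maps $\poset\to\{1,\dots,k+1\}$, giving $\ehr_{\order(\poset)}(k)=\Omega(\poset,k+1)$.

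For the second equality I would introduce the map $\phi\colon\R^{\poset}\to\R^{\poset}$, $\phi(f)(i)=f(i)-\max\{f(j)\colon j\lessdot i\}$, with the convention that the maximum over the empty set is $0$, together with the candidate inverse $\psi\colon\R^{\poset}\to\R^{\poset}$, $\psi(g)(i)=\max\{g(i_1)+\cdots+g(i_m)\colon i_1<_\poset\cdots<_\poset i_m=i\}$, the maximum ranging over chains of $\poset$ with top element $i$. The steps are: (i) $\phi(\order(\poset))\subseteq\chain(\poset)$, using that each element $i_{l-1}$ of a chain lies weakly below some element covered by $i_l$, so that $\phi(f)(i_l)\le f(i_l)-f(i_{l-1})$ and the sum of $\phi(f)$ along any chain telescopes to at most $f(i_k)\le 1$ (nonnegativity of the coordinates of $\phi(f)$ being the one-element-chain case); (ii) $\psi(\chain(\poset))\subseteq\order(\poset)$, since a chain ending at $i$ extends to one ending at any $i'>_\poset i$; and (iii) $\psi\circ\phi=\operatorname{id}$ and $\phi\circ\psi=\operatorname{id}$. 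For (iii), the inequality $\psi(\phi(f))(i)\le f(i)$ is the telescoping estimate of (i), while the reverse inequality follows by descending from $i$ through covers that successively realize the maxima defining $\phi$, producing a chain whose $\phi(f)$-sum is exactly $f(i)$; dually, $\phi(\psi(g))=g$ is obtained by comparing an optimal chain for $\psi(g)(i)$ with optimal chains for the values $\psi(g)(j)$, $j\lessdot i$. I expect part (iii) to be the main obstacle, not because any single estimate is hard, but because one must keep both directions of the telescoping argument and the bookkeeping of maximizing chains straight.

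Finally, both $\phi$ and $\psi$ are given by formulas using only sums, differences and maxima of coordinates, so each sends integer vectors to integer vectors; being mutual inverses, they restrict to inverse bijections between $\order(\poset)\cap\Z^{\poset}$ and $\chain(\poset)\cap\Z^{\poset}$. Moreover $\phi$ and $\psi$ are positively homogeneous, $\phi(\lambda f)=\lambda\phi(f)$ and $\psi(\lambda g)=\lambda\psi(g)$ for $\lambda\ge 0$, so $\phi(k\order(\poset))=k\chain(\poset)$, and hence $\phi$ restricts to a bijection $k\order(\poset)\cap\Z^{\poset}\to k\chain(\poset)\cap\Z^{\poset}$ for every $k\ge 0$. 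Counting both sides yields $\ehr_{\chain(\poset)}(k)=\ehr_{\order(\poset)}(k)$, which combined with the first equality completes the proof.
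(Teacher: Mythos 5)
Your proof is correct and is exactly Stanley's original transfer-map argument from \cite{StanleyOrderPoly}, which is the source the paper cites for this theorem; the paper itself gives no proof. The rescaling/shift argument for $\ehr_{\order(\poset)}(k)=\Omega(\poset,k+1)$ and the piecewise-linear, degree-one homogeneous, integral bijection $\phi$ (with inverse $\psi$) between $\order(\poset)$ and $\chain(\poset)$ are the standard route. The one place you should spell things out a bit more carefully is in verifying $\phi\circ\psi=\operatorname{id}$: when comparing the optimal chain for $\psi(g)(i)$ with those for $\psi(g)(j)$, $j\lessdot i$, you want to first observe that, since $g\ge 0$, the maximum defining $\psi(g)(i)$ is attained on a \emph{saturated} chain, so its penultimate element is a cover of $i$; that makes the two-sided comparison clean (and handles the case of minimal $i$ by the empty-max convention). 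With that noted, the argument is complete.
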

	\begin{remark}
		\label{rem:InteriorLatticePointsStrictlyOrderPreservingMaps}
		As is implicit in Stanley's proof, interior integer points are in bijection with \emph{strictly order-preserving  maps}, i.e., maps $f$ that satisfy $i <_{\poset} j$ implies $f(i)< f(j)$.
	\end{remark}
	In a poset $\poset$, maximal chains can have different lengths. 
	A chain with maximum length is called a \emph{longest chain}. 
	\begin{figure}[h]
		\center
		\begin{tikzpicture}[darkstyle/.style={circle,draw,fill=gray!40,minimum size=20}, scale = .5]
		\node[circle,draw=black,fill=white!80!black,minimum size=20, scale=.4] (1) at (4,0) { };
		\node[circle,draw=black,fill=white!80!black,minimum size=20, scale=.4] (2) at (0,2) { };
		\node[circle,draw=black,fill=white!80!black,minimum size=20, scale=.4] (3) at (0,4) { };
		\node[circle,draw=black,fill=white!80!black,minimum size=20, scale=.4] (4) at (4,6) { };
		\node[circle,draw=black,fill=white!80!black,minimum size=20, scale=.4] (5) at (8,3) { };
		\draw[thick, red] (5) -- (1)--(2)--(3) -- (4)-- (5);
		\draw[thick](5) -- (1);
		\draw[thick](5) -- (4);
		
		\end{tikzpicture}
		\caption{Edges belonging to longest chain are colored red.}
	\end{figure}
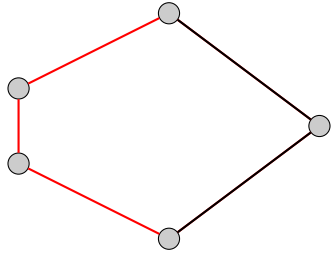
	
	\begin{remark}
		The codegree of $\order(\poset)$ equals the rank of $\posetTopBottom$, i.e., it equals the number of edges in the longest chain of $\posetTopBottom$.
	\end{remark}
	
	\subsection{Level Affine Semigroups}
	This subsection is based on \cite[Ch. 6]{BrunsGubeladze}. Let $\polytope\subset \R^d$ be a lattice $d$-polytope with vertex set $V(\polytope)$ and let $\k$ be an algebraically closed field of characteristic zero. We define the \emph{cone over $\polytope$} as
	\[
	\cone(\polytope): ={\rm span}_{\R_{\geq 0}}\{(\v,1) \, : \, \v\in V(\polytope)\}\subset \R^d\times \R.
	\]
	The set 
	\[
	\semigroup(\polytope): = \left\{ \x \colon \x = \sum_{i \in I} \lambda_i (\v_i,1) \text{, for } \v_i \in \polytope\cap \Z^{d}\text{ and } \lambda_i \in \Z_{\geq 0},\, \forall i \in I  \right\}
	\]
	forms an additive \emph{semigroup}, i.e., a set that is closed under addition, which contains a neutral element, and where addition is associative. We remark that some authors don't require semigroups to contain the neutral element. Moreover, we define the semigroup $\coneZ {\polytope}: = \cone(\polytope)\cap \Z^{d+1}$. We say that $\polytope$ has the \emph{integer-decomposition property} (IDP) if $ \semigroup(\polytope) = \coneZ {\polytope}$. This semigroup gives rise to the \emph{Ehrhart ring of $P$} 
	\[
	\k[P]:=\k[\coneZ{P}]=\k[\x^{\p}\cdot y^m \, : \, (\p,m)\in  \coneZ{P}]\subset \k[x_1^{\pm 1},\ldots, x_n^{\pm 1},y].
	\]	
	The semigroup $\coneZ{P}$ is normal, i.e.,  its semigroup ring $\k[P]$ is normal.
	\begin{remark}
		Since every order polytope has a unimodular triangulation, order polytopes have the integer-decomposition property. Hence, we have
		\[
		\k[P] = \k[\coneZ{P}] = \k[\semigroup(P)].
		\]
	\end{remark}
	The ring $\k[\polytope]$ is also a finitely generated $\k$-algebra of Krull dimension $d+1$, which inherits a natural $\Z_{\geq 0}$-grading given by the $y$-degree. Therefore, we can write it as $\k[\polytope]=\bigoplus_{i\in\Z_{\geq 0}} \k[\polytope]_i$, where $\k[\polytope]_i$ is the $\k$-vector space generated by the degree $i$ monomials. We consider the \emph{Hilbert function} $\hilb \colon \Z_{\geq 0} \to \Z_{\geq 0}$ given by $\hilb (t) = \dim_{\k} \k[\polytope]_t$. As with the Ehrhart function, it makes sense to examine the formal power series
	\[
	\Hilb_{\k[\polytope]}(z) = \sum_{t\geq 0}\hilb(t) z^t ,
	\]
	which we call the \emph{Hilbert series of $\k[\polytope]$.} This Hilbert series is in fact a rational function:
	\begin{theorem}[{\cite[Thm. 6.39, 6.40]{BrunsGubeladze}}]
		\label{thm:ch1HilbertSerre}
		Let $\polytope$ be a lattice $d$-polytope. Then the Hilbert series of $\k[\polytope]$ is of the form
		\begin{equation*}
			\Hilb_{\k[\polytope]}(z) = \frac{n(z)}{(1-z)^{d+1}},
		\end{equation*}
		where $n$ is a polynomial with non-negative, integral coefficients. 
	\end{theorem}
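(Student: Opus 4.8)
The plan is to recognize $\Hilb_{\k[\polytope]}(z)$ as the Ehrhart series $\Ehr_{\polytope}(z)$ and then read off the assertion from the results already recorded above.

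First I would compute the graded pieces of $\k[\polytope]=\k[\coneZ{\polytope}]$. By construction of a semigroup ring, the monomials $\x^{\p}y^{m}$ with $(\p,m)\in\coneZ{\polytope}$ form a $\k$-basis of $\k[\polytope]$, and the $\Z_{\geq 0}$-grading is by $y$-degree, so $\k[\polytope]_t$ has $\k$-basis $\{\x^{\p}y^{t} : (\p,t)\in\coneZ{\polytope}\}$. Since $\coneZ{\polytope}=\cone(\polytope)\cap\Z^{d+1}$ and $\cone(\polytope)$ is the set of nonnegative combinations of the $(\v,1)$ with $\v\in V(\polytope)$, a point $(\p,t)$ with $t\geq 1$ lies in $\cone(\polytope)$ if and only if $\p/t\in\polytope$, hence $(\p,t)\in\coneZ{\polytope}$ if and only if $\p\in t\polytope\cap\Z^d$; and for $t=0$ the apex $\mathbf 0$ is the only point of $\coneZ{\polytope}$. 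Therefore $\hilb(t)=\#(t\polytope\cap\Z^d)=\ehr_{\polytope}(t)$ for every $t\geq 0$, so that $\Hilb_{\k[\polytope]}(z)=\Ehr_{\polytope}(z)$ as formal power series.

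With this identification the statement is immediate from Stanley's nonnegativity theorem (Theorem~\ref{thm:Stanleynonnegativity}): writing $\Ehr_{\polytope}(z)=(h^{\ast}_0+\cdots+h^{\ast}_d z^d)/(1-z)^{d+1}$ with all $h^{\ast}_i\in\Z_{\geq 0}$, one simply sets $n(z):=h^{\ast}_0+\cdots+h^{\ast}_d z^d$. For the weaker claim of rationality with denominator $(1-z)^{d+1}$ one does not even need Theorem~\ref{thm:Stanleynonnegativity}: Ehrhart's polynomiality theorem says $\ehr_{\polytope}$ is a polynomial of degree $d$, and expanding it in the basis $\bigl\{\binom{t+d-k}{d}\bigr\}_{0\leq k\leq d}$ of the space of polynomials of degree $\leq d$, together with the identity $\sum_{t\geq 0}\binom{t+d-k}{d}z^t=z^k/(1-z)^{d+1}$, gives the rational form directly.

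Along this route there is essentially no obstacle, precisely because the genuinely hard input --- nonnegativity of the numerator --- has been black-boxed as Theorem~\ref{thm:Stanleynonnegativity}, whose own proof rests on the Cohen--Macaulayness of $\k[\polytope]$. If one instead wanted a proof that does not invoke Theorem~\ref{thm:Stanleynonnegativity}, the work would be: (i) exhibit a homogeneous system of parameters $\theta_1,\dots,\theta_{d+1}$ for $\k[\polytope]$ of degree $1$ --- here one observes that the degree-$1$ subalgebra $\k[\,\x^{\v}y : \v\in\polytope\cap\Z^d]$ already has Krull dimension $d+1$ (its group of differences has rank $d+1$, as the lattice points of $\polytope$ affinely span $\R^d$), that $\k[\polytope]$ is a finite module over it (because sufficiently high dilates of a lattice polytope are normal, so any $(\p,m)\in\coneZ{\polytope}$ with $m\geq d$ splits off a degree-$1$ generator, leaving only finitely many module generators of degree $\leq d-1$), and that a linear Noether normalization of the degree-$1$ subalgebra over the infinite field $\k$ is then a homogeneous system of parameters for all of $\k[\polytope]$; and (ii) invoke Hochster's theorem that the normal affine semigroup ring $\k[\polytope]=\k[\coneZ{\polytope}]$ is Cohen--Macaulay, hence a free graded module $\bigoplus_j\k[\theta_1,\dots,\theta_{d+1}](-e_j)$ with $e_j\in\Z_{\geq 0}$, which yields $\Hilb_{\k[\polytope]}(z)=\bigl(\sum_j z^{e_j}\bigr)/(1-z)^{d+1}$ with manifestly nonnegative integral numerator. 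Step (ii), the Cohen--Macaulayness, is the real content; everything else is bookkeeping.
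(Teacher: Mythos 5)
Your proposal is correct; since the paper cites this result from Bruns--Gubeladze without proof, there is no in-paper argument to compare against, but yours reconstructs the standard one. A few remarks. The first route --- identifying $\hilb(t)=\ehr_{\polytope}(t)$ and reading the claim off Theorem~\ref{thm:Stanleynonnegativity} --- is essentially a relabeling, since the two theorems are the same statement under that identification (the paper itself records the equality $\Ehr_P(z)=\Hilb_{\k[\polytope]}(z)$ right after Theorem~\ref{thm:ch1HilbertSerre}); this is fine as a pointer, but it black-boxes all the content into a result whose own proof is exactly your second route. That second route --- a degree-one homogeneous system of parameters by Noether normalization of the degree-one subalgebra, module-finiteness of $\k[\polytope]$ over it, and freeness from Cohen--Macaulayness via Hochster (Theorem~\ref{thm:ch1Hochster}) --- is the actual proof, and it is what Bruns--Gubeladze do. Your sketch of module-finiteness via the Ewald--Wessels/Bruns--Gubeladze--Trung bound (every $(\p,m)\in\coneZ{\polytope}$ with $m\geq d$ splits off a degree-one lattice point) is correct and self-contained; an alternative that avoids the dilate estimate is to observe that $\coneZ{\polytope}$ is finitely generated (Gordan) and that $\k[\coneZ{\polytope}]$ is the normalization of the degree-one subalgebra, hence module-finite over it by Noether's finiteness theorem. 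Either way, the decomposition $\k[\polytope]\cong\bigoplus_j \k[\theta_1,\ldots,\theta_{d+1}](-e_j)$ gives $n(z)=\sum_j z^{e_j}$, which is manifestly a polynomial with nonnegative integer coefficients, and degree considerations (codegree $\geq 1$) bound $\deg n\leq d$. No gaps.
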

	The reader might have noticed that $\ehr_P (t)= \hilb_P (t)$ and that $\Ehr_P(z) = \Hilb_{\k[\polytope]}(z)$, a fact that is used in \cite[Sec. 12.1]{MillerSturmfels} to prove the polynomiality of $\ehr_P$. 
	
	By a seminal result of Melvin Hochster \cite{Hochster}, $\k[P]$ is Cohen--Macaulay:
	\begin{theorem}[{\cite[Thm. 6.10]{BrunsGubeladze}}]
		\label{thm:ch1Hochster}
		Let $\semigroup$ be a normal, affine semigroup. Then $\k[\semigroup]$ is Cohen--Macaulay for every field $\k$.
	\end{theorem}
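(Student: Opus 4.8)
The plan is to prove this the way Hochster originally did, via a local cohomology computation that works over every field (a quicker invariant-theoretic argument, sketched at the end, only applies in characteristic zero).

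\emph{Setup and reduction.} I would first split off the maximal subgroup $\semigroup\cap(-\semigroup)$ of $\semigroup$; this only tensors $\k[\semigroup]$ with a Laurent polynomial ring and so does not affect Cohen--Macaulayness, and afterwards the cone $C=\cone(\semigroup)$ may be assumed pointed. Choosing coordinates so that the group generated by $\semigroup$ equals $\Z^d$, normality of $\semigroup$ means $\semigroup=C\cap\Z^d$ with $C$ full-dimensional, and then $R:=\k[\semigroup]$ has Krull dimension $d$. Writing $\maxideal$ for the maximal monomial ideal of $R$, it suffices to show $H^j_{\maxideal}(R)=0$ for all $j<d$, since this forces $\depth R\geq d=\dim R$.

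\emph{Main step.} Since $R$ is $\Z^d$-graded, so is each $H^j_{\maxideal}(R)$, and I would compute it degree by degree. Fix a triangulation $\Delta$ of $C$ that introduces no new rays, and build the Ishida complex from the subrings $\k[\sigma\cap\Z^d]$, $\sigma\in\Delta$. A routine degreewise analysis gives, for each $a\in\Z^d$, a natural isomorphism $H^j_{\maxideal}(R)_a\cong\widetilde H^{\,j-1}(\Delta_a;\k)$, where $\Delta_a\subseteq\Delta$ is the subcomplex determined by which of the closed half-spaces cutting out $C$ contain $a$. The crux is a trichotomy: $\Delta_a$ is always empty, contractible, or homotopy equivalent to the sphere $S^{d-1}$. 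Granting it, $\widetilde H^{\,j-1}(\Delta_a;\k)=0$ for all $j<d$, and summing over $a\in\Z^d$ completes the proof.

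\emph{Main obstacle and an alternative.} The trichotomy for the homotopy type of $\Delta_a$ is the one place real work is needed, and it is where convexity is decisive: $C$ is convex and pointed, so $\partial C$ is a $(d-1)$-sphere and the stars of its faces are contractible, and $\Delta_a$, being carved out of these by linear half-spaces, is star-shaped (hence contractible) except in the extreme case, where it is all of $\partial C$ or empty; pinning down the bookkeeping of half-spaces and the boundary cases is the technical heart. The reductions, the Ishida formalism, and the final depth count are routine. In characteristic zero one can sidestep all of this: the map $a\mapsto(\langle u_1,a\rangle,\dots,\langle u_m,a\rangle)$, with $u_1,\dots,u_m$ the facet normals of $C$, embeds $C$ into $\R_{\geq 0}^m$ and realizes $R$ as a ring of invariants $\k[y_1,\dots,y_m]^G$ of a linearly reductive group $G$ (a torus times a finite abelian group), hence as a module direct summand of a polynomial ring, so $R$ is Cohen--Macaulay by the Hochster--Eagon theorem.
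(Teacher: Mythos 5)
The paper does not prove this result; it is quoted as background with a citation to Bruns--Gubeladze Theorem~6.10, so there is no argument in the text to compare yours against. On its own terms your sketch is the standard modern route (Hochster's local cohomology computation, packaged via the Ishida complex), and the overall plan is sound, but two details deserve correction. The Ishida complex is built from the \emph{face localizations} $\k[\semigroup]_\sigma = \k[\semigroup - (\semigroup\cap\sigma)]$ indexed by the faces $\sigma$ of the cone $C$ itself, not from the subrings $\k[\sigma\cap\Z^d]$; and no triangulation of $C$ is introduced --- triangulating changes the complex and computes something else. The trichotomy you state for the degree-$a$ subcomplex is indeed the genuine content, and the star-shaped/convexity heuristic is the right idea, but pinning it down requires care about the indexing shift and whether one passes to a cross-section of $C$ (so that the extreme case is a $(d-2)$-sphere rather than a $(d-1)$-sphere); what one actually needs, and what comes out of the convexity argument, is acyclicity below the top degree, which suffices. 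Finally, your characteristic-zero shortcut is correct, but the restriction is slightly misattributed: the group you exhibit is the diagonalizable group scheme with character group $\Z^m/\phi(\Z^d)$, i.e.\ a torus times a product of $\mu_n$'s, and diagonalizable group schemes are linearly reductive in \emph{every} characteristic. The real reason this route is only quick in characteristic zero is that the Hochster--Roberts theorem itself requires Frobenius methods in positive characteristic, so your instinct to fall back on the cohomological argument as the one that works uniformly is right.
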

	Several important properties of $\k[\polytope]$ can be stated in terms of what is called the \emph{canonical module $\canonical {\k[\polytope]}$}. Sometimes the canonical module is also called the dualizing module.
	
	Let $\mathcal{R}=\bigoplus_{i\in\Z_{\geq 0}} \mathcal{R}_i$ be a finitely generated $\Z_{\geq 0}$-graded $\k$-algebra of Krull dimension $d$. Suppose that $\Rc$ is local and Cohen-Macaulay.
	\begin{definition}
		\label{def:ch1Gorenstein}
		The \emph{canonical module} of $\Rc$, $\omega_{\Rc}$, is the unique module (up to isomorphism)  such that $\Ext_\Rc^d(\k,\omega_{\Rc})=\k$ and $\Ext_{\Rc}^i(\k,\omega_\Rc)=0$ when $i\neq d$. We say that $\Rc$ is \emph{Gorenstein} if $\omega_\Rc\cong\Rc$ as an $\Rc$-module, or equivalently is $\omega_\Rc$ is generated by a single element. A lattice polytope $P$ is Gorenstein if $\k[P]$ is Gorenstein.
	\end{definition}
	However, we will only work with semigroup rings $\k[\polytope]$ arising from lattice polytopes $\polytope$. In this case, the canonical module has a particularly nice description, which is why we omit a proper definition of the $\Ext$-functor:
	\begin{theorem}[Danilov--Stanley {\cite[Thm. 6.31]{BrunsGubeladze}}]
		\label{thm:ch1DanilovStanley}
		Let the notation be as above and let $\k$ be a field. Then the ideal generated by the monomials corresponding to interior integer points $\int(\coneZ{\polytope})$ is the canonical module of $\k[\polytope]$.
	\end{theorem}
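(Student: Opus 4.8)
The plan is to reconstruct this classical fact about normal affine semigroup rings via graded local duality; in the text one simply cites \cite[Thm.~6.31]{BrunsGubeladze}. Set $R := \k[\polytope] = \k[\coneZ{\polytope}]$, write $C := \cone(\polytope) \subseteq \R^{d+1}$ for the (pointed, full-dimensional) cone and $S := \coneZ{\polytope} = C \cap \Z^{d+1}$, so that $R = \bigoplus_{a \in S} \k\, t^{a}$ carries the fine $\Z^{d+1}$-grading refining the grading by $y$-degree; let $\maxideal$ be its graded maximal ideal. By Hochster's theorem (Theorem~\ref{thm:ch1Hochster}), $R$ is Cohen--Macaulay of Krull dimension $n := d+1$, so graded local duality applied to the module $R$ yields a $\Z^{d+1}$-graded isomorphism $\canonical{R} \cong H^{n}_{\maxideal}(R)^{\vee}$, where $(-)^{\vee}$ denotes the graded dual $M^{\vee} = \bigoplus_{a} \operatorname{Hom}_{\k}(M_{-a}, \k)$. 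Thus $\dim_{\k}(\canonical{R})_{a} = \dim_{\k} H^{n}_{\maxideal}(R)_{-a}$ for all $a$, and the problem splits into (i) computing the $\Z^{d+1}$-graded module $H^{n}_{\maxideal}(R)$ and (ii) recognizing its dual as the interior ideal.

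Step (i) is the heart of the argument, and it is exactly where normality of $S$ enters: it guarantees that $H^{\bullet}_{\maxideal}(R)$ is computed by the Ishida complex built from the localizations $R_{F} := R[t^{-a} : a \in F]$ at the faces $F$ of $C$, ordered by dimension. On the degree-$a$ strand each $(R_{F})_{a}$ is $\k$ or $0$ according to a simple membership condition, and the strand is identified (Hochster's formula) with the reduced cochain complex of a polyhedral subcomplex of the boundary complex of $C$. Since that boundary complex is a sphere of dimension $d-1$, a direct computation shows the reduced cohomology vanishes except in top degree, where it is one-dimensional precisely when $-a \in \int(C)$; hence
\[
H^{n}_{\maxideal}(R) \;=\; \bigoplus_{a \in (-\int(C)) \cap \Z^{d+1}} \k\, t^{a},
\]
the sign being the usual Ehrhart--Macdonald reciprocity pattern. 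I expect this polyhedral-topological computation, together with the careful setup of the Ishida complex under normality, to be the main obstacle; everything else is formal.

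Step (ii) is bookkeeping. Taking graded duals flips the support, so $\canonical{R} \cong \bigoplus_{a \in \int(C) \cap \Z^{d+1}} \k\, t^{a} = \bigoplus_{a \in \int(\coneZ{\polytope})} \k\, t^{a}$; since every module in sight lives inside the Laurent ring $\k[x_{1}^{\pm 1}, \dots, x_{d}^{\pm 1}, y^{\pm 1}]$, this exhibits $\canonical{R}$ concretely as the $\k$-span of the monomials indexed by interior lattice points. Because $\int(C) + C \subseteq \int(C)$ for the convex cone $C$, that $\k$-span is an honest ideal of $R$, and it coincides with the ideal generated by $\{ t^{a} : a \in \int(\coneZ{\polytope}) \}$: the containment $\supseteq$ is immediate, and for $\subseteq$ each interior monomial is already one of the generators. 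This is precisely the asserted description of $\canonical{\k[\polytope]}$ (for non-full-dimensional $\polytope$ one reads ``relative interior'' throughout). As an alternative route I could instead invoke that $\operatorname{Spec} R$ is the affine toric variety associated with $C$, whose dualizing sheaf corresponds to the torus-invariant divisor $-\sum_{F} D_{F}$ summed over the facets $F$ of $C$; its global sections are spanned by the characters $t^{a}$ with $\langle a, u_{F} \rangle \geq 1$ for every primitive inner facet normal $u_{F}$, which is again exactly the set of interior lattice monomials --- but this merely repackages the same combinatorial input.
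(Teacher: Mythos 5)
The paper does not prove this statement; it is quoted verbatim as Theorem~6.31 from Bruns--Gubeladze and used as a black box, so there is no ``paper's own proof'' to compare against. Your reconstruction --- graded local duality reducing $\canonical{R}$ to the graded dual of $H^{d+1}_{\maxideal}(R)$, computation of that local cohomology via the Ishida complex, identification of the degree-$a$ strand with reduced cochains on a subcomplex of $\partial C$, and the resulting concentration on $-\int(C)\cap\Z^{d+1}$ --- is exactly the standard textbook argument and matches the strategy of the source the paper cites. The concluding observation that $\int(C)+C\subseteq\int(C)$ makes the $\k$-span of interior monomials an ideal equal to the one they generate is the right (and necessary) bookkeeping step to pass from ``vector space of interior monomials'' to ``ideal generated by interior monomials,'' and the toric-variety remark at the end is a valid repackaging. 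One small caveat worth flagging for precision: the Ishida complex computes local cohomology for any affine semigroup ring; what normality actually buys you is the clean combinatorial description of its graded strands (each $(R_F)_a$ being $\k$ or $0$ by a membership test, and the resulting identification with cochains on a polyhedral sphere), not the validity of the complex itself. With that nuance, the sketch is correct.
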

	
	We can now state our main definition:
	\begin{definition}
		\label{def:level}
		We say that a lattice polytope $\polytope$ is \emph{level} if the $\k[\polytope]$-module $\omega_{\k[\polytope]}$ is --- as a $\k[\polytope]$-module --- generated by elements of degree $\codeg(\polytope)$. We say that a finite poset $\poset$ is level if $\order(\poset)$ is level.
	\end{definition}
	\begin{remark}
		\label{rem:LevelExplanation}
		We would like to remark that this is equivalent to saying that a lattice $d$-polytope $\polytope$ is level if and only if for all $\x \in \int (\cone(\polytope)) \cap \Z^{d+1}$ there exists a point $\y \in \int (\cone(\polytope)) \cap \Z^{d+1}$ at height $\codeg (\polytope)$ such that $\x - \y \in \cone(\polytope)$.
	\end{remark}
	\begin{definition}
		\label{def:MinimalLatticePoints}
		We say that $\x \in \int (\cone(\polytope)) \cap \Z^{d+1}$ is \emph{minimal} if the corresponding monomial is a minimal generator of $\omega_{\k[\polytope]}$.
	\end{definition}
	\begin{example}
		Let $P: =  [0,2] \times [0,1]$. Then the cone over the polytope is 
		\[
		\cone(P) =  \{\x = \lambda_1 (0,0,1) + \lambda_2 (0,1,1)+\lambda_3 (2,0,1)+\lambda_4 (2,1,1) \colon \lambda_i \geq 0 \}.
		\]
		
		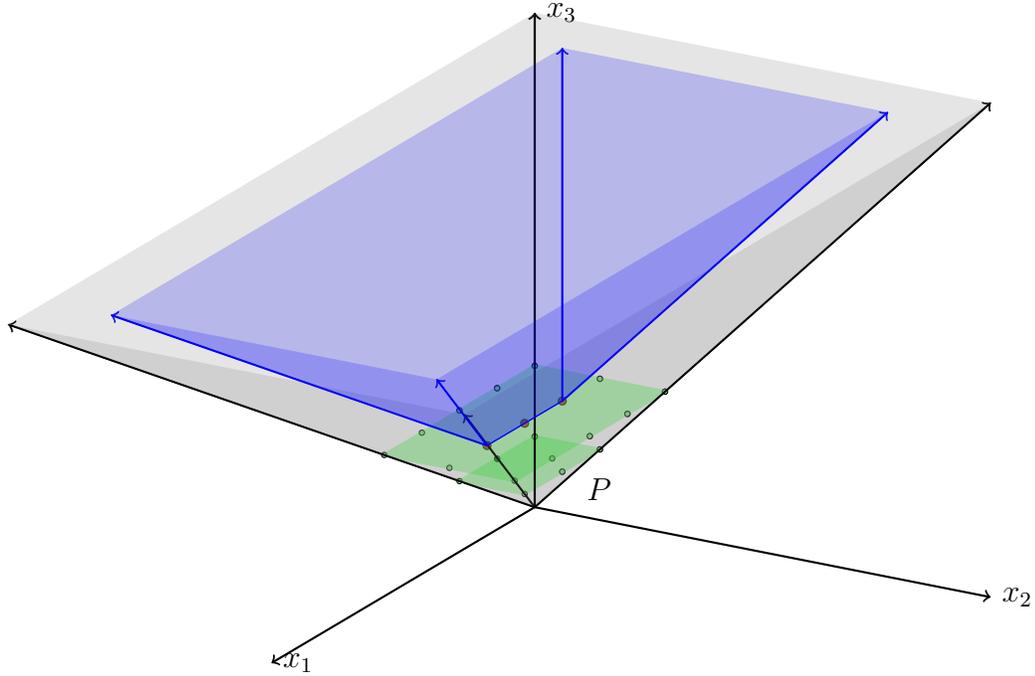
\begin{figure}[h]
			\centering
			\tdplotsetmaincoords{70}{120}
			\begin{tikzpicture}
			[tdplot_main_coords,
			grid/.style={dotted,black},
			axis/.style={->,black,thick},
			my fill/.style={fill=gray, fill opacity=.2},
			myfill/.style={fill=gray, fill opacity=.2},
			mf/.style={fill=blue, fill opacity=.2},
			cube/.style={opacity=.3,very thick,fill=white},scale=1]
			
			\foreach \z in {1,2}
			\foreach \x in {0,.5,...,\z}
			\foreach \y in {0,1,...,\z}
			\pgfmathtruncatemacro{\cur}{(2 * \x}
			\node[circle,draw=black,fill=white!80!black,minimum size=20, scale=.1] at (\cur,\y,\z) {};
			
			\node[circle,draw=black,fill=black,minimum size=20, scale=.1] at (1,0,1) {};
			\node[circle,draw=black,fill=white!80!black,minimum size=20, scale=.1] at (0,0,1) {};
			\node[circle,draw=black,fill=white!80!black,minimum size=20, scale=.1] at (0,1,1) {};
			\node[circle,draw=black,fill=white!80!black,minimum size=20, scale=.1] at (1,1,1) {};
			\node[circle,draw=black,fill=red,minimum size=30, scale=.1] at (1,1,2) {};
			\node[circle,draw=black,fill=red,minimum size=30, scale=.1] at (3,1,2) {};

			\node[circle,draw=black,fill=red,minimum size=30, scale=.1] at (2,1,2) {};
			
			\node[circle,draw=black,fill=white!80!black,minimum size=20, scale=.1] at (2,0,2) {};
			\node[circle,draw=black,fill=white!80!black,minimum size=20, scale=.1] at (2,2,2) {};
			\node[circle,draw=black,fill=white!80!black,minimum size=20, scale=.1] at (0,2,2) {};
			\node[circle,draw=black,fill=white!80!black,minimum size=20, scale=.1] at (0,0,2) {};
			\node[circle,draw=black,fill=white!80!black,minimum size=20, scale=.1] at (1,0,2) {};
			\node[circle,draw=black,fill=white!80!black,minimum size=20, scale=.1] at (0,1,2) {};
			\node[circle,draw=black,fill=white!80!black,minimum size=20, scale=.1] at (1,2,2) {};
			\draw[axis] (0,0,0) -- (7,0,0) node[anchor=west]{$x_1$};
			\draw[axis] (0,0,0) -- (0,7,0) node[anchor=west]{$x_2$};
			\draw[axis] (0,0,0) -- (0,0,7) node[anchor=west]{$x_3$};
			
			\draw[cube, green] (0,0,1) -- (0,1,1) -- (2,1,1) -- (2,0,1) -- cycle;
			\node[right] at (1.5,1.5,1) {$P$}; 
			\draw[cube, green] (0,0,2) -- (0,2,2) -- (4,2,2) -- (4,0,2) -- cycle;
			\draw[axis,black] (0,0,0) -- (0,0,7);
			\draw[axis,black] (0,0,0) -- (14,0,7);
			\draw[axis,black] (0,0,0) -- (14,7,7);
			\draw[axis,black] (0,0,0) -- (0,7,7);
			
			\draw[axis,blue] (1,1,2) -- (1,1,7);
			\draw[axis,blue] (1,1,2) -- (1,6,7);
			\draw[axis,blue] (3,1,2) -- (13,1,7);
			\draw[axis,blue] (3,1,2) -- (13,6,7);

			\draw[my fill](14,0,7) --  (0,0,0)  -- (14,7,7);
			\draw[my fill](14,7,7) --  (0,0,0)  -- (0,7,7);
			\draw[myfill](0,7,7) --  (0,0,0)  -- (0,0,7);
			\draw[myfill](0,0,7) --  (0,0,0)  -- (14,0,7);

			\draw[mf,blue] (1,6,7) -- (1,1,2)  -- (1,1,7);
			\draw[mf,blue](13,1,7) -- (3,1,2)  -- (13,6,7);
			\draw[mf,blue](13,6,7) --  (3,1,2) -- (1,1,2)  --(1,6,7);
			\draw[mf,blue](1,1,7) -- (1,1,2)-- (3,1,2)  -- (13,1,7);

			\end{tikzpicture}
			\caption{The box $P$ and its dilate $2P$, the cone over $P$ (gray) and the (conical hull of the) canonical module (blue).}
			\label{fig:ch1ExampleSquare}
		\end{figure}
		The semigroup is generated (over $\Z$) by the points $(0,0,1)$, $(2,0,1)$, $(0,1,1)$, and $(2,1,1)$. There are exactly $(t+1)(2t+1)$ many monomials of degree $t$ and thus $\hilb (t) = \dim \k[P]_t = (t+1)(2t+1)$. Figure~\ref{fig:ch1ExampleSquare} shows the monomials of degree less than three. Hence, the Ehrhart series $\Ehr_{P}({z})$ equals
		\[
		\Ehr_{P}({z}) = 1 + \sum_{ k \geq 1} (k+1)(2k+1) z^k  = \frac{1 +  3z}{(1-z)^3} .
		\]
		Thus $P$ is a polytope of degree $1$ with codegree $2$. The canonical module is generated by the three interior integer points of lowest degree (marked red), namely $(1,1,2)$, $(2,1,2)$, and $(3,1,2)$. This shows that $P$ is level. 	
	\end{example}
	
	\section{Miyazaki's characterization}
	\label{sec:Miyazaki}
In this section, we recall the characterization for levelness of order polytopes which was introduced by Miyazaki, see~\cite{Miyazaki}. 
	In order to give a characterization of the level property, Miyazaki defined sequences with condition $N$.
	\begin{definition}[{\cite[Def. 3.1]{Miyazaki}}]
		\label{def:Nsequences}
		Let  $i_1,j_1,i_2,j_2,\ldots,i_t,j_t$,  be a possible empty sequence of elements in a finite poset $\poset$. We say the sequence satisfies condition $N$ if
		\begin{enumerate}
			\item $i_1<j_1>i_2<j_2>\dots>i_t<j_t$ and
			\item for any $m, n$ with $1 \leq m<n\leq t$, $i_m\nleq j_n$.
		\end{enumerate}
	\end{definition}
	\begin{definition}
		Let $i_1,j_1,i_2,j_2,\ldots,i_t,j_t$ be a sequence of elements in a finite poset $\poset$ with condition $N$, and set $j_0=\infty$ and $i_{t+1}=-\infty$.
		We set
		\[
		r(i_1,j_1,\ldots,i_t,j_t):=\sum_{s=1}^{t}(\rk[i_{s},j_{s-1}]-\rk[i_s,j_s])+\rk[i_{t+1},j_t].
		\]
		Moreover, set
		\[r_{\max}:=\max\{ r(i_1,j_1,\ldots,i_t,j_t) : i_1,j_1,\ldots,i_t,j_t \textnormal{ is a sequence with condition } N  \}.
		\]
	\end{definition}
	Associated to every sequence with condition $N$, Miyazaki defines a special element of $\cone(\order(\poset))$:
	\begin{definition}[{\cite[Def 3.6]{Miyazaki}}]
		Let $i_1,j_1,i_2,j_2,\ldots,i_t,j_t$ be a sequence of elements in a finite poset $\poset$ with condition $N$, and set $j_0=\infty$ and $i_{t+1}=-\infty$.
		We define
		\[
		x{(i_1,j_1,\ldots,i_t,j_t)}_{i_m}:=\sum_{s=m}^{t}(\rk[i_{s+1},j_{s}]-\rk[i_s,j_s])
		\]
		for $1 \leq m \leq t+1$ and
		\[
		y{(i_1,j_1,\ldots,i_t,j_t)}_{k}:=\max\{\rk[i_s,k]+x{(i_1,j_1,\ldots,i_t,j_t)}_{i_s} : k \geq i_s\}
		\]
		for $k \in \posetTopBottom$.
	\end{definition} 
	These elements give rise to an important class of minimal elements, as the next lemma shows.
	\begin{lemma}[{\cite[Lem. 3.8]{Miyazaki}}]
		\label{lem:miya}
		Let $i_1,j_1,i_2,j_2,\ldots,i_t,j_t$ be a sequence of elements in a finite poset $\poset$ with condition $N$, and set $j_0=\infty$ and $i_{t+1}=-\infty$.
		If $r(i_1,j_1,\ldots,i_t,j_t)=r_{\max}$, then 
		the element $\y (i_1,j_1,\ldots,i_t,j_t)$ is minimal in the sense of Definition~\ref{def:MinimalLatticePoints}. In particular, it is an interior lattice point in the cone.
		Furthermore, 
		\[
		y(i_1,j_1,\ldots,i_t,j_t)_{i_m}=x(i_1,j_1,\ldots,i_t,j_t)_{i_m},
		\]
		\[
		y(i_1,j_1,\ldots,i_t,j_t)_{j_{m-1}}=\rk[i_{m},j_{m-1}]+ x(i_1,j_1,\ldots,i_t,j_t)_{i_m}
		\]
		for $1 \leq m \leq t+1$. In particular, $	y(i_1,j_1,\ldots,i_t,j_t)_{{j_0}}=r_{\max}$.
	\end{lemma}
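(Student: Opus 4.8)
The plan is to use throughout Miyazaki's convention that a lattice point of $\cone(\order(\poset))$ of height $N$ is an order-preserving function $\posetTopBottom\to\Z$ with value $0$ at $-\infty$ and $N$ at $\infty$; by Remark~\ref{rem:InteriorLatticePointsStrictlyOrderPreservingMaps} such a function is an interior lattice point precisely when it is \emph{strictly} order-preserving. I abbreviate $\y=\y(i_1,\ldots,i_t,j_t)$, with coordinates $y_k$, and $x_{i_m}=x(i_1,\ldots,i_t,j_t)_{i_m}$. Combining the Danilov--Stanley description of the canonical module (Theorem~\ref{thm:ch1DanilovStanley}) with the normality of $\k[\order(\poset)]$, the point $\y$ fails to be minimal exactly when $\y=\z+w$ for an interior lattice point $\z$ and a nonzero lattice point $w$ of the cone. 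Accordingly I would proceed in three stages: (i) show $\y$ is an interior lattice point; (ii) prove the two displayed equalities and deduce $y_{j_0}=y_\infty=r_{\max}$; (iii) rule out a decomposition $\y=\z+w$.

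Stage (i) is short. The function $\y$ is integer-valued, $y_{-\infty}=x_{i_{t+1}}=0$ because $i_{t+1}=-\infty$ is the only index with $i_s\le-\infty$ and the sum defining $x_{i_{t+1}}$ is empty, and super-additivity of $\rk[\cdot,\cdot]$ forces $\y$ to be strictly order-preserving: for $a<b$ in $\posetTopBottom$, every index $s$ with $i_s\le a$ also has $i_s\le b$ and $\rk[i_s,b]+x_{i_s}\ge\rk[i_s,a]+\rk[a,b]+x_{i_s}$, so $y_b\ge y_a+\rk[a,b]>y_a$. For stage (ii), the inequality ``$\ge$'' in both equalities comes from evaluating the defining maximum at $s=m$; the work is in the reverse inequality, i.e.\ bounding every other feasible index $s$. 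Part (2) of condition $N$ eliminates all $s<m$ from the maximum for $y_{i_m}$ and all $s<m-1$ from that for $y_{j_{m-1}}$ (for instance $i_s\le i_m<j_m$ would give $i_s<j_m$, contradicting $i_s\nleq j_m$), and it also rules out the degenerate case $i_s=j_{m-1}$ for $s>m$; the index $s=m-1$ contributes exactly the target value via the telescoping relation $x_{i_{m-1}}-x_{i_m}=\rk[i_m,j_{m-1}]-\rk[i_{m-1},j_{m-1}]$. The indices $s>m$ are the crux: I would delete the block $i_m,j_m,\ldots,i_{s-1},j_{s-1}$ from the sequence (merely truncating when $s=t+1$), verify — using part (2) of condition $N$ and the strict inequality $i_s<j_{m-1}$ — that the result again satisfies condition $N$, and compute that its $r$-value equals
\[
r(i_1,\ldots,j_t)-\bigl((\rk[i_m,j_{m-1}]+x_{i_m})-(\rk[i_s,j_{m-1}]+x_{i_s})\bigr).
\]
Since $r(i_1,\ldots,j_t)=r_{\max}$, this forces $\rk[i_s,j_{m-1}]+x_{i_s}\le\rk[i_m,j_{m-1}]+x_{i_m}$, and combining with $\rk[i_s,j_{m-1}]\ge\rk[i_s,i_m]+\rk[i_m,j_{m-1}]$ also yields $\rk[i_s,i_m]+x_{i_s}\le x_{i_m}$. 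This proves both equalities for $1\le m\le t+1$; taking $m=1$ in the second and using the identity $r(i_1,\ldots,j_t)=\rk[i_1,j_0]+x_{i_1}$ gives $y_{j_0}=r_{\max}$, so in particular $\y$ has height $r_{\max}$.

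For stage (iii), suppose $\y=\z+w$ as above and normalize $\y,\z,w$ to vanish at $-\infty$; then $\z$ is strictly order-preserving, $w$ is order-preserving, and $w_\infty\ge1$ (otherwise $w$ has height $0$, hence $w=0$). From the two equalities and the telescoping relation, $\y$ increases by exactly $\rk[a,b]$ on each of the intervals $[i_{t+1},j_t]$, $[i_m,j_m]$, and $[i_m,j_{m-1}]$ for $1\le m\le t$. Hence along a longest chain inside such an interval $\y$ goes up by exactly $1$ at every cover step, while the strictly order-preserving $\z$ goes up by at least $1$; so $w$ goes up by at most $0$ there, and being order-preserving $w$ is constant on that chain. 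Following the overlapping intervals from $[i_{t+1},j_t]=[-\infty,j_t]$ up to $[i_1,j_0]=[i_1,\infty]$ then gives
\[
0=w_{-\infty}=w_{j_t}=w_{i_t}=w_{j_{t-1}}=w_{i_{t-1}}=\cdots=w_{j_1}=w_{i_1}=w_{j_0}=w_\infty,
\]
contradicting $w_\infty\ge1$, so $\y$ is minimal. I expect stage (ii) — making the surgery on condition-$N$ sequences precise and carrying out the $r$-value bookkeeping — to be the main obstacle; given those equalities, stages (i) and (iii) are short.
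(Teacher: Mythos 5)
The paper does not reproduce a proof of this lemma --- it is cited verbatim from Miyazaki, so there is no in-paper argument to compare against. Judged on its own terms, your proof is sound, and the route (maximality of $r$ exploited via ``surgery'' on condition-$N$ sequences, then a decomposition argument along intervals where $\y$ is tight) is the natural one and almost certainly tracks Miyazaki's.

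A few small confirmations worth recording. In stage~(ii), your claim that condition $N$ eliminates the degenerate case $i_s=j_{m-1}$ for $s>m$ is indeed correct: part~(1) gives $i_m<j_{m-1}$, so $i_s=j_{m-1}$ would force $i_m<i_s<j_s$, contradicting $i_m\nleq j_s$ from part~(2). The surgered sequence $i_1,j_1,\ldots,i_{m-1},j_{m-1},i_s,j_s,\ldots,i_t,j_t$ satisfies condition $N$ because the surviving pairs of indices inherit their part~(2) constraints from the original, and the only new adjacency $j_{m-1}>i_s$ is precisely what you checked. Your $r$-value bookkeeping also checks out: using the identity $r=\rk[i_1,j_0]+x_{i_1}$ (which follows by telescoping the definition of $r$), the surgery changes $x_{i_{m-1}}$ by exactly $(\rk[i_m,j_{m-1}]+x_{i_m})-(\rk[i_s,j_{m-1}]+x_{i_s})$ and propagates that change unchanged to $x_{i_1}$, which gives the displayed formula; the truncation case $s=t+1$ is handled by the same computation with $x'_{i_{m-1}}=\rk[-\infty,j_{m-1}]-\rk[i_{m-1},j_{m-1}]$. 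The deduction of the $y_{i_m}$ equality from the $y_{j_{m-1}}$ equality via super-additivity uses that $i_s\le i_m$ implies $i_s\le j_{m-1}$, which holds since $i_m<j_{m-1}$. Stage~(iii) is correct: from the two equalities and telescoping, $\y$ jumps by exactly $1$ along every cover step of a longest chain in each interval $[i_{t+1},j_t]$, $[i_m,j_m]$, $[i_m,j_{m-1}]$, so any decomposition $\y=\z+w$ with $\z$ strictly order-preserving forces $w$ to be constant along these chains, and the intervals overlap at $j_t,i_t,j_{t-1},\ldots,i_1$ to carry $w_{-\infty}=0$ all the way up to $w_\infty$, ruling out $w\ne 0$.
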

Figure~\ref{fig:FinkMiyazaki} illustrates Definition~\ref{def:Nsequences} and Lemma~\ref{lem:miya}. 
	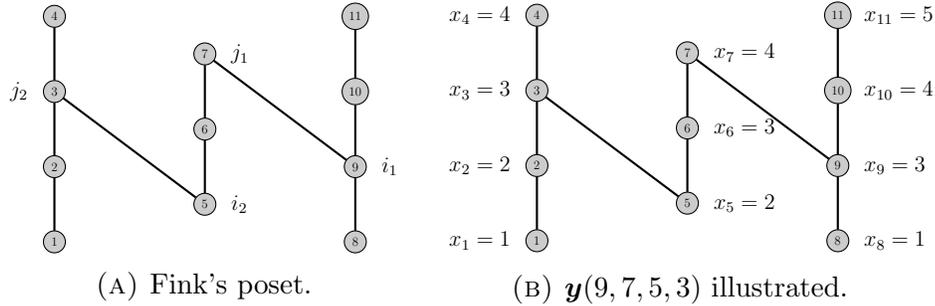
\begin{figure}[h]
	\centering
		\begin{subfigure}{.5\textwidth}
			\centering
		\begin{tikzpicture}[darkstyle/.style={circle,draw,fill=gray!40,minimum size=20}, scale = .5]
		\node[circle,draw=black,fill=white!80!black,minimum size=20, scale=.4] (1) at (0,6) { 4};
		\node[circle,draw=black,fill=white!80!black,minimum size=20, scale=.4] (2) at (0,4) { 3};
		\node[circle,draw=black,fill=white!80!black,minimum size=20, scale=.4] (3) at (0,2) { 2};
		\node[circle,draw=black,fill=white!80!black,minimum size=20, scale=.4] (4) at (0,0) { 1};
		
		\draw[thick] (1)--(2)--(3) -- (4);
		\node[circle,draw=black,fill=white!80!black,minimum size=20, scale=.4] (5) at (4,5) { 7};
		\node[circle,draw=black,fill=white!80!black,minimum size=20, scale=.4] (6) at (4,3) { 6};
		\node[circle,draw=black,fill=white!80!black,minimum size=20, scale=.4] (7) at (4,1) { 5};

		\draw[thick] (5) -- (6) -- (7);
		
		\node[circle,draw=black,fill=white!80!black,minimum size=20, scale=.4] (8) at (8,6) {11 };
		\node[circle,draw=black,fill=white!80!black,minimum size=20, scale=.4] (9) at (8,4) { 10};
		\node[circle,draw=black,fill=white!80!black,minimum size=20, scale=.4] (10) at (8,2) {9 };
		\node[circle,draw=black,fill=white!80!black,minimum size=20, scale=.4] (11) at (8,0) { 8};
		
		\draw[thick] (8)--(9)--(10) -- (11);
		\draw[thick] (2) -- (7) ;
		\draw[thick] (5) -- (10) ;
		
		\node[right, scale=.7] at (8.5,2) {$i_1$};
		\node[right, scale=.7] at (4.5,5) {$j_1$};
		\node[right, scale=.7] at (4.5,1) {$i_2$};
		\node[left, scale=.7] at (-.5,4) {$j_2$};
		\end{tikzpicture}
		\caption{Fink's poset.}
		\label{fig:Fink}	
		\end{subfigure}%
		\begin{subfigure}{.5\textwidth}
			\centering
					\begin{tikzpicture}[darkstyle/.style={circle,draw,fill=gray!40,minimum size=20}, scale = .5]
		\node[circle,draw=black,fill=white!80!black,minimum size=20, scale=.4] (1) at (0,6) { 4};
		\node[circle,draw=black,fill=white!80!black,minimum size=20, scale=.4] (2) at (0,4) { 3};
		\node[circle,draw=black,fill=white!80!black,minimum size=20, scale=.4] (3) at (0,2) { 2};
		\node[circle,draw=black,fill=white!80!black,minimum size=20, scale=.4] (4) at (0,0) { 1};
		
		\draw[thick] (1)--(2)--(3) -- (4);
		\node[circle,draw=black,fill=white!80!black,minimum size=20, scale=.4] (5) at (4,5) { 7};
		\node[circle,draw=black,fill=white!80!black,minimum size=20, scale=.4] (6) at (4,3) { 6};
		\node[circle,draw=black,fill=white!80!black,minimum size=20, scale=.4] (7) at (4,1) { 5};

		\draw[thick] (5) -- (6) -- (7);
		
		\node[circle,draw=black,fill=white!80!black,minimum size=20, scale=.4] (8) at (8,6) {11 };
		\node[circle,draw=black,fill=white!80!black,minimum size=20, scale=.4] (9) at (8,4) { 10};
		\node[circle,draw=black,fill=white!80!black,minimum size=20, scale=.4] (10) at (8,2) {9 };
		\node[circle,draw=black,fill=white!80!black,minimum size=20, scale=.4] (11) at (8,0) { 8};
		
		\draw[thick] (8)--(9)--(10) -- (11);
		\draw[thick] (2) -- (7) ;
		\draw[thick] (5) -- (10) ;
		\node[left, scale = .7] at (-0.5,0) {$x_1 = 1$};
		\node[left, scale = .7] at (-0.5,2) {$x_2 = 2$};
		\node[left, scale = .7] at (-0.5,4) {$x_3 = 3$};
		\node[left, scale = .7] at (-0.5,6) {$x_4 = 4$};
		
		\node[right, scale = .7] at (4.5,1) {$x_5 = 2$};
		\node[right, scale = .7] at (4.5,3) {$x_6 = 3$};
		\node[right, scale = .7] at (4.5,5) {$x_7 = 4$};
		
		\node[right, scale = .7] at (8.5,0) {$x_8 = 1$};
		\node[right, scale = .7] at (8.5,2) {$x_9 = 3$};
		\node[right, scale = .7] at (8.5,4) {$x_{10} = 4$};
		\node[right, scale = .7] at (8.5,6) {$x_{11} = 5$};
		\end{tikzpicture}
			\caption{$\y(9,7,5,3)$ illustrated.}			
			\label{fig:BadPoint}
		\end{subfigure}
		\caption{Fink's poset and the minimal element $\y$ illustrated.}
		\label{fig:FinkMiyazaki}
	\end{figure}

	Now, we can introduce the characterization of Miyazaki:
	\begin{lemma}[{\cite[Thm 3.9]{Miyazaki}}]
		\label{lem:Miyazaki-zigzag}
		Let $\poset$ be a poset and $r= \operatorname{codeg}(\order(\poset))$.
		Then $\poset$ is level is and only if $r_{\max}=r$.
	\end{lemma}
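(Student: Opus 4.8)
The plan is to reduce the statement to the single equality $g=r_{\max}$, where $g$ denotes the largest height of a minimal generator of $\canonical{\k[\order(\poset)]}$. By the Danilov--Stanley description (Theorem~\ref{thm:ch1DanilovStanley}), $\canonical{\k[\order(\poset)]}$ is the monomial ideal supported on the interior lattice points of $\cone(\order(\poset))$, and by Remark~\ref{rem:CodegreeInterpretation} the least height at which such a point occurs is $r=\codeg(\order(\poset))$. An interior point of height $r$ cannot be written as an interior point of smaller height plus an element of $\coneZ{\order(\poset)}$, so it is automatically a minimal generator (Definition~\ref{def:MinimalLatticePoints}); consequently $\order(\poset)$ is level if and only if \emph{every} minimal generator has height $r$, i.e.\ if and only if $g=r$. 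On the other side, the empty sequence ($j_0=\infty$, $i_1=-\infty$) gives $r(\,)=\rk[-\infty,\infty]=r$, so $r_{\max}\ge r$ always. Hence, once $g=r_{\max}$ is known, $\order(\poset)$ is level $\iff g=r \iff r_{\max}=r$, as claimed.

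The inequality $g\ge r_{\max}$ is immediate from Lemma~\ref{lem:miya}: taking any sequence with condition~$N$ attaining $r_{\max}$, the point $\y(i_1,j_1,\ldots,i_t,j_t)$ is a minimal generator of height $y(\ldots)_{j_0}=r_{\max}$. (In particular this already shows that $r_{\max}>r$ forces a minimal generator in degree $>\codeg$, hence non-levelness.) The real work is the reverse inequality $g\le r_{\max}$: from a minimal generator $\x$ of height $m$ I must build a sequence with condition~$N$ whose value $r(\ldots)$ is at least $m$. I would begin with a reformulation of minimality. View $\x$ as a strictly order-preserving labelling of $\posetTopBottom$ with $\x_{-\infty}=0$, $\x_{\infty}=m$. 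Then $\x$ is a minimal generator if and only if it cannot be lowered by exactly one level, i.e.\ for no order filter $F$ of $\poset$ is $\x-\indicator{F\cup\{\infty\}}$ still strictly order-preserving: if $\x=\y+\z$ with $\y$ interior of smaller height and $\z\in\coneZ{\order(\poset)}$, then $\x-\y$ is a nonnegative, weakly order-preserving integral labelling of $\posetTopBottom$ vanishing at $-\infty$, hence a sum $\indicator{G_1}+\cdots+\indicator{G_k}$ of indicators of a descending chain of filters $G_1\supseteq\cdots\supseteq G_k$ with $\infty\in G_1\not\ni -\infty$, and removing $\indicator{G_1}$ lowers $\x$ by one level. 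A direct computation further shows that $\x-\indicator{F\cup\{\infty\}}$ fails to be strictly order-preserving precisely when some cover $p\lessdot q$ of $\posetTopBottom$ has $p\notin F\cup\{\infty\}$, $q\in F\cup\{\infty\}$ and $\x_q=\x_p+1$; call such a cover \emph{tight}. Thus $\x$ is a minimal generator exactly when every filter of $\posetTopBottom$ containing $\infty$ but not $-\infty$ is entered by a tight cover.

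Next I would extract a zigzag from $\x$ by a greedy descent. Put $j_0:=\infty$ and descend along a maximal chain of tight covers to a first vertex $i_1$; since a chain of tight covers has exactly $\x_{j_0}-\x_{i_1}$ edges while every chain from $i_1$ to $j_0$ has at most that many, it is a longest chain and $\x_{j_0}-\x_{i_1}=\rk[i_1,j_0]$. If $i_1=-\infty$ we stop; otherwise the up-set of $i_1$ in $\posetTopBottom$ is a filter containing $\infty$ but not $-\infty$, hence entered by a tight cover $p\lessdot q$; maximality of the descent forces $q\ne i_1$, so $q>i_1$, and we put $j_1:=q$ (so $i_1<j_1$) and resume a maximal tight descent from $j_1$ (possible since $p$ is a tight lower cover of $j_1$), reaching $i_2$ with $\x_{j_1}-\x_{i_2}=\rk[i_2,j_1]$; then iterate, enlarging the test filter at step $s$ to the union of the up-sets of $i_1,\ldots,i_s$ so as to control incomparabilities. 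Each descent strictly decreases the $\x$-value, so the process terminates, and minimality (non-reducibility) forces termination only at $i_{t+1}=-\infty$. By construction every descending segment is tight, so $\x_{j_s}-\x_{i_{s+1}}=\rk[i_{s+1},j_s]$ for $0\le s\le t$, whereas $\x_{j_s}-\x_{i_s}\ge\rk[i_s,j_s]$ always. Telescoping the labels along $\infty=j_0>i_1<j_1>\cdots<j_t>i_{t+1}=-\infty$,
\begin{align*}
m \;=\; \x_{\infty}-\x_{-\infty}
  &= \sum_{s=0}^{t}\bigl(\x_{j_s}-\x_{i_{s+1}}\bigr)-\sum_{s=1}^{t}\bigl(\x_{j_s}-\x_{i_s}\bigr) \\
  &= \sum_{s=0}^{t}\rk[i_{s+1},j_s]-\sum_{s=1}^{t}\bigl(\x_{j_s}-\x_{i_s}\bigr) \\
  &\le \sum_{s=0}^{t}\rk[i_{s+1},j_s]-\sum_{s=1}^{t}\rk[i_s,j_s]
   \;=\; r(i_1,j_1,\ldots,i_t,j_t),
\end{align*}
so $r_{\max}\ge m$, which gives $g\le r_{\max}$ and finishes the proof.

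The hard part will be the greedy construction in the previous paragraph: verifying that the descent never gets stuck before reaching $-\infty$ — this is exactly where the tight-cover reformulation of minimality enters — and, more delicately, that the extracted sequence genuinely satisfies condition~$N$, not merely the alternating comparabilities $i_1<j_1>i_2<\cdots$. In particular the incomparabilities $i_m\nleq j_n$ for $m<n$ are the reason the test filter must be enlarged (rather than reset) at each step, and checking that this enlargement actually suffices, together with the termination argument, is where the bulk of the technical effort lies; the remaining bookkeeping with interval ranks is routine.
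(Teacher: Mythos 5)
The paper does not actually prove Lemma~\ref{lem:Miyazaki-zigzag}: it is quoted verbatim from Miyazaki's work and used as a black box to derive Proposition~\ref{prop:Miyazaki-zigzag}. So there is no ``paper proof'' to compare against; I can only assess your argument on its own terms.

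Your overall architecture is sound and matches the intended picture. Setting $g$ to be the top degree of a minimal generator of $\canonical{\k[\order(\poset)]}$, the reduction ``level $\iff g=r$'' is correct, the observation $r_{\max}\ge r$ via the empty sequence is correct, and the inequality $g\ge r_{\max}$ is a clean and correct application of Lemma~\ref{lem:miya} (the point $\y$ attached to an $r_{\max}$-attaining sequence is a minimal generator of height exactly $r_{\max}$). Your reformulation of minimality of $\x$ in terms of tight covers entering every filter is also correct: $\x-\indicator{F}$ fails to be strictly order-preserving precisely when a cover $p\lessdot q$ with $p\notin F$, $q\in F$ is tight, and the decomposition of $\x-\y$ into a sum of indicator vectors of a decreasing chain of filters shows that non-minimality is equivalent to being lowerable by a single filter. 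The telescoping identity and the resulting bound $m\le r(i_1,j_1,\ldots,i_t,j_t)$ for any zigzag whose down-segments are tight are also correct.

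The genuine gap is exactly where you flag it, and it is not a detail that ``routine bookkeeping'' will close: you have not shown that the greedily extracted zigzag satisfies condition~$N$. Using the union filter $F_s=\bigcup_{u\le s}\mathrm{upset}(i_u)$ only guarantees that the tight cover $p\lessdot q$ you pick has $p\notin F_s$ and $q\in F_s$; nothing prevents $q\ge i_u$ for some $u<s$, in which case setting $j_s:=q$ gives $i_u\le j_s$ and condition~$N$ fails. Indeed $q\in F_s$ only tells you $q\ge i_u$ for \emph{some} $u\le s$, not that $u=s$. Restricting to the filter $\mathrm{upset}(i_s)$ alone does not help either: the entering cover could again have $q$ above an earlier $i_u$. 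Since $r_{\max}$ is by definition a maximum over condition-$N$ sequences only, the inequality $m\le r(\ldots)$ for your extracted zigzag does not yield $m\le r_{\max}$ unless condition~$N$ actually holds. To close this you would either need a more careful choice rule (e.g.\ choosing the entering cover so that its top $q$ is as low as possible, and then proving this suffices) or a separate reduction lemma showing that any tight zigzag can be shortcut to a condition-$N$ zigzag without decreasing $r(\ldots)$; neither is established in your sketch, and neither is obvious. A second, much smaller issue: termination does not follow from ``each descent strictly decreases the $\x$-value'' --- $\x_{j_s}>\x_{i_s}$ goes back up, so the sequence of $\x$-values at the $i_s$ need not be monotone. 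The correct termination argument is that $i_{s+1}\notin F_s$ forces $F_{s+1}\supsetneq F_s$, so the filters strictly increase in a finite poset.
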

	
	In this paper, we characterize levelness of order polytopes in terms of weighted digraphs.
	Given a poset $\poset$, we define the \emph{Hasse graph $H(\poset)$} of $\poset$ to be the digraph with nodes coming from $\posetTopBottom$ and with directed, weighted edges $(i,j,-1)$ and $(j,i,1)$, where $ i \lessdot j$.
	In our language, a sequence $i_1,j_1,\ldots,i_t,j_t$ with condition $N$ can be reinterpreted as a path $\path$ in $H(\poset)$ from $\infty$ to $-\infty$ with the up-edges (or the down-edges) coming from longest chains in $[i_m,j_m]$ (or $[i_{m+1},j_{m}]$), where $j_0=\infty$ and $i_{t+1}=-\infty$, and we say that such a path satisfies condition $N$.
	Moreover, we set $r(\path):=r(i_1,j_1,\ldots,i_t,j_t)$ and $\y(\path):=\y(i_1,j_1,\ldots,i_t,j_t)$. We chose the weights of the edges, so that $-r(\path)$ equals the weighted length.
	
	\begin{remark}
		This special minimal element $\y(\path)$ has the property that 
		for every up (or down) interval $[i,j]$ in the path we have $y(\path)_i - y(\path)_j = \rk [i,j]$. 
		This is a direct consequence of Lemma \ref{lem:miya} and a brief computation.
	\end{remark}
	
	Now, we can characterize the level property as the following.
	\begin{proposition}
		\label{prop:Miyazaki-zigzag}
		Let $\poset$ be a finite poset and $r= \operatorname{codeg}(\order(\poset))$.
		Then $\poset$ is not level if and only if there exists a path $\path$ in $H(\poset)$ with condition $N$ such that $r(\path) > r$.
	\end{proposition}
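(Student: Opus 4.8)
The strategy is to reduce the statement to Miyazaki's characterization, Lemma~\ref{lem:Miyazaki-zigzag}, which says that $\poset$ is level if and only if $r_{\max}=r$. Combined with the correspondence recalled above between sequences with condition $N$ and paths in $H(\poset)$ with condition $N$ --- under which $r(i_1,j_1,\ldots,i_t,j_t)$ equals $r(\path)$ --- it suffices to prove that $\poset$ is not level if and only if $r_{\max}>r$. For this we first record the elementary inequality $r_{\max}\ge r$, which always holds: the correspondence includes the empty sequence, realized by the path that descends from $\infty$ to $-\infty$ along a single longest chain of $\posetTopBottom$, and for the empty sequence one has $i_1=-\infty$ and $j_0=\infty$, so its $r$-value equals $\rk[-\infty,\infty]=\rk(\posetTopBottom)$, which in turn equals $r=\codeg(\order(\poset))$ by the remark identifying the codegree of $\order(\poset)$ with the rank of $\posetTopBottom$. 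Since $r_{\max}$ is a maximum over all condition-$N$ sequences, it follows that $r_{\max}\ge r$, so $r_{\max}=r$ fails precisely when $r_{\max}>r$.

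With $r_{\max}\ge r$ in hand the equivalence is immediate. If $\poset$ is not level, then $r_{\max}\ne r$ by Lemma~\ref{lem:Miyazaki-zigzag}, hence $r_{\max}>r$, and by definition of $r_{\max}$ there is a sequence $i_1,j_1,\ldots,i_t,j_t$ with condition $N$ whose $r$-value exceeds $r$; translating to the path language yields a path $\path$ in $H(\poset)$ with condition $N$ and $r(\path)>r$. Conversely, given such a path, the corresponding sequence witnesses $r_{\max}>r$, so in particular $r_{\max}\ne r$, and Lemma~\ref{lem:Miyazaki-zigzag} shows that $\poset$ is not level.

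The only place where I would write out more detail is the sequence/path dictionary itself: that a path with condition $N$ from $\infty$ to $-\infty$ decomposes uniquely into maximal descending and ascending runs, producing a well-defined sequence $i_1,j_1,\ldots,i_t,j_t$; that condition $N$ for the path is exactly conditions (1)--(2) of Definition~\ref{def:Nsequences} for that sequence; and that replacing the chain inside each interval by a longest chain leaves the value $r$ unchanged, so that $r(\path)$ agrees with $r(i_1,j_1,\ldots,i_t,j_t)$ and, via the chosen edge weights, with minus its weighted length. I do not expect any real obstacle here: all the substance is already contained in Lemmas~\ref{lem:miya} and~\ref{lem:Miyazaki-zigzag}, and the proposition is essentially a change of vocabulary, designed to set up the Bellman--Ford reformulation carried out in the next section.
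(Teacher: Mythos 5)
Your proof is correct and takes essentially the same route as the paper, which simply notes that the proposition follows from Lemmas~\ref{lem:Miyazaki-zigzag} and~\ref{lem:miya}; you spell out the needed inequality $r_{\max}\ge r$ via the empty sequence and then translate the negation of $r_{\max}=r$ into the existence of a condition-$N$ path with $r(\path)>r$.
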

	
	\begin{proof}
		This directly follows from Lemmas \ref{lem:Miyazaki-zigzag} and \ref{lem:miya}.
	\end{proof}
	
	\begin{remark}
		Even if $\poset$ is level with $r= \operatorname{codeg}(\order(\poset))$, there may exist a path in $H(\poset)$ of length $>r$.
		The interested reader might construct their favorite counter-example.
	\end{remark}

	\section{A New Characterization of Levelness}
	\label{sec:NewAlgo} 
	In this section, we introduce an algorithm for checking levelness of order polytopes. 
	First, we need to associate a weighted digraph to a poset $\poset$ together with a subposet $\poset' \subset \posetTopBottom$, where we require that $i \lessdot_{\poset'} j$ implies $i \lessdot_{\poset} j$.
	\begin{definition}
	\label{def:WeightedGraphs}
		Let $\poset$ be a finite poset and let $\poset'$ be a subposet of $\posetTopBottom$ such that $i \lessdot_{\poset'} j$ implies $i \lessdot_{\poset} j$. Let $\graph{\poset, \poset'} = (\posetTopBottom, E)$ be the weighted digraph with weighted, directed edges:
		\begin{enumerate}
			\item $(i,j,-1) \in E$ if and only if $j \gtrdot_{\posetTopBottom} i$;
			\item $(j,i,1) \in E$ if and only if $j \gtrdot_{\poset'} i$.
		\end{enumerate}
		Clearly, $\graph{\poset,\posetTopBottom}=H(\poset)$.
		If $\poset$ is clear from the context, we will write $\graph{\poset'}$.
	\end{definition}
	A \emph{negative cycle} is a directed cycle whose sum of weights is negative.  A \emph{wedge of cycles} is a closed directed path (where repetition is allowed) whose sum of weights is negative. 
	An integer point $\x\in\int(\cone(\order(\poset))) \cap \Z^{d+1}$  is \emph{sharp} along a covering pair $j \gtrdot i$ if $x_j = x_i+1$.

	Next, we associate a weighted digraph to every integer point in $\int(\cone(\order(\poset)))$. The following lemma shows that the associated digraph does not have any negative cycles. 
	\begin{lemma}
		\label{lem:NoNegCycles}
		Let $\b \in \int(\cone(\order(\poset))) \cap \Z^{d+1}$ be given. Then the weighted digraph $\Gamma_{\b}$ whose nodes are given by the elements of $\posetTopBottom$ and whose weighted, directed edges are
		\begin{itemize}
			\item $\left \{ (i,j,-1)\colon i \lessdot j\right\}  \cup \left \{(j,i, 1) \colon i \lessdot j \text{, } b_j - b_i=1\right\}$ ,
			\item $\{(-\infty, i, -1): i \gtrdot -\infty \} \cup \{( i,-\infty, 1): i \gtrdot -\infty, b_i  =1\}$,
			\item and $\{(i,\infty, -1): \infty  \gtrdot i\} \cup \{(\infty, i, 1): \infty \gtrdot i, b_i  =\max_j {b_j}\}$.
		\end{itemize}
		does not have any negative cycles. In particular, every subgraph contains no negative cycles. 
	\end{lemma}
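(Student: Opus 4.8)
The plan is to exhibit a \emph{feasible potential} for $\Gamma_{\b}$, i.e.\ a function $\pi\colon\posetTopBottom\to\Z$ satisfying $\pi(v)-\pi(u)\le w$ for every weighted directed edge $(u,v,w)$ of $\Gamma_{\b}$. Given such a $\pi$, telescoping along any closed directed walk $u_0\to u_1\to\cdots\to u_k=u_0$ shows that the sum of its edge weights is at least $\sum_{l}\bigl(\pi(u_{l+1})-\pi(u_l)\bigr)=0$. Hence $\Gamma_{\b}$ has no negative closed walk at all; in particular it has neither a negative cycle nor a negative wedge of cycles, and since any subgraph uses only a subset of these edges, the conclusion passes to every subgraph as well.

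To define $\pi$, recall (directly from the inequalities defining $\order(\poset)$, cf.\ Remark~\ref{rem:InteriorLatticePointsStrictlyOrderPreservingMaps}) that an interior lattice point $\b\in\Z^{d+1}$ amounts to a strictly order-preserving integral function $i\mapsto b_i$ on $\poset$ with $b_i\ge 1$ for every $i$. It is convenient to extend this function to $\posetTopBottom$ by $b_{-\infty}:=0$ and $b_{\infty}:=\max_{j}b_j+1$; with this convention the three families of edges in the statement are captured uniformly: for each covering pair $u\lessdot v$ in $\posetTopBottom$ there is a down-edge $(u,v,-1)$, and there is an up-edge $(v,u,1)$ exactly when $b_v-b_u=1$. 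Now put $\pi(v):=-b_v$.

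Finally one checks the potential inequality on each edge. For a down-edge $(u,v,-1)$ it reads $b_v-b_u\ge 1$: this holds for $u,v\in\poset$ because $b$ is strictly order-preserving and integral, it is $b_v\ge 1$ when $u=-\infty$, and it is $b_u\le\max_{j}b_j$ when $v=\infty$, each of which is clear. For an up-edge $(v,u,1)$ present in $\Gamma_{\b}$ we have $b_v-b_u=1$, hence $\pi(u)-\pi(v)=b_v-b_u=1\le 1$. So $\pi$ is feasible and the lemma follows. I do not expect a genuine obstacle here: the argument is essentially bookkeeping, and the only care required is to fix the right extension of $\b$ to $\pm\infty$ and the sign of $\pi$ (so that the weight-$1$ covering edges become tight), together with the observation that the single telescoping estimate simultaneously rules out wedges of cycles and survives restriction to subgraphs.
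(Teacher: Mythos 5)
Your proof is correct and is essentially the same argument as the paper's, repackaged in the cleaner language of feasible potentials: the paper telescopes the weights of a cycle along its maximal up- and down-runs and invokes $b_j-b_i\ge\length{i,j}$, which is exactly the edge-by-edge potential inequality $\pi(v)-\pi(u)\le w$ for $\pi=-b$ summed along those runs. One small caution: your "down-edge"/"up-edge" labels are reversed relative to the covering direction (you call $(u,v,-1)$ with $u\lessdot v$ a down-edge), which could confuse a reader, but the verification itself is correct.
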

	
	\begin{proof}
		For $i, j \in \posetTopBottom$, let $u(i,j)$ denote a directed path from $i$ to $j$ where $i<j$ and let $d(l,k)$ denote a directed path from $l$ to $k$ where $l>k$.
		Let $u(i_1,i_2)$, $d(i_2,i_3)$, $u(i_3,i_4)$, $\dots$, $d(i_s,i_1)$ be a directed cycle in $\Gamma_{\b}$ with $i_1< i_2$, $i_3<i_2$, $\dots$, $i_1<i_s$. \rem{In particular, $p(i_1,i_2)$ is an up path, while $p(i_3,i_2)$ is a down path, etc.} We first remark that the weights of the down-paths $d(i,i+1)$ are given by  
		\[
		\stackrel{=1}{\overbrace{b_{i} - b^{(1)}}}+ \stackrel{=1}{\overbrace{b^{(1)} - b^{(2)}}} + \dots - b^{(r)}+ \stackrel{=1}{\overbrace{b^{(r)} - b_{i+1}}} = b_{i} - b_{i+1},
		\]
		where we set $b_{\infty} = \max_k b_k +1$ and $b_{-\infty} = 0$.
		Therefore, the sum of the weights in the cycle is equal to
		\begin{align*}
			&(b_{i_2} - b_{i_3}) +  \cdots + (b_{i_s} - b_{i_1}) - \length{i_1,i_2}-  \cdots -\length{i_{s-1},i_s} \\
			= &(b_{i_2} - b_{i_1}) +  \cdots + (b_{i_s} - b_{{i_{s-1}}}) - \length{i_1,i_2}-  \cdots -\length{i_{s-1},i_s} \geq 0,
		\end{align*}
		since $\b \in \int(\coneOPi)$ implies that $b_j - b_i \geq \length{i,j}$ for all $i<j$, where $\length{i,j}$ is the length of the path from $i$ to $j$. 
	\end{proof}
	The following theorem uses the \emph{Bellman--Ford algorithm}, which was introduced by Bellman and Ford, see for instance \cite{Bellman}. We are using this algorithm as a black box. Instead of explicitly describing it, we will merely state some basic facts about it:
	\begin{itemize}
		\item The Bellman--Ford algorithm finds the shortest path from a sink to any other node in a weighted digraph. In contrast to other algorithms, it can also deal with negative weights assuming that the digraph does not contain any negative cycles (that can be reached from the starting node), see \cite[Thm. 8.5]{SchrijverCombOptA}.
		\item If there is such a negative cycle, the Bellman--Ford algorithm can detect the negative cycle, \cite[Thm. 8.6]{SchrijverCombOptA}.
		\item The Bellman--Ford algorithm runs in $O(\# V \cdot \# E)$, where $V$ is the vertex set and $E$ is the edge set of the underlying graph, see \cite[Thm. 8.5]{SchrijverCombOptA}.
	\end{itemize}
	Given a path $\path$  in $H(\poset)$ with condition $N$, let $\poset'(\path)$ be the subposet of $\posetTopBottom$ whose covering pairs are given by the up paths of $\path$.
	Now, we give a new characterization of level order polytopes.
	\begin{theorem}
		\label{thm:levelchar}
		Let $\poset$ be a finite poset on $d$ elements and let $r= \operatorname{codeg}(\order(\poset))$. Then $\poset$ is level if and only if for any path $\path$ in $H(\poset)$ with condition $N$ such that $\graph{\poset'(\path)}$ has no negative cycles,
		$\Gamma(\poset'(\path) \cup \{\textnormal{longest chains in }\posetTopBottom \})$ has no negative cycles. 
	\end{theorem}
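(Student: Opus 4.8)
The plan is to recast Miyazaki's numerical criterion (Proposition~\ref{prop:Miyazaki-zigzag}, Lemma~\ref{lem:Miyazaki-zigzag}) as a statement about feasible potentials of the digraphs $\graph{\poset'}$. I identify an interior lattice point of $\cone(\order(\poset))$ at height $h$ with a tuple $\b=(b_i)_{i\in\posetTopBottom}$ satisfying $b_{-\infty}=0$, $b_\infty=h$, and $b_i<b_j$ whenever $i<_{\posetTopBottom}j$; being interior forces $b_j-b_i\ge\rk[i,j]$ for all $i\le j$ in $\posetTopBottom$, and $h\ge r$ by Remark~\ref{rem:CodegreeInterpretation}. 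The central step is the \emph{dictionary}: $\graph{\poset'}$ has no negative cycle if and only if there is an interior lattice point of $\cone(\order(\poset))$ that is sharp along every covering pair of $\poset'$. For ``if'' (a mild variant of Lemma~\ref{lem:NoNegCycles}), given such a $\b$ the map $p(i):=-b_i$ satisfies $p(v)\le p(u)+w$ on every edge $(u,v,w)$ of $\graph{\poset'}$ --- the up-edges $(i,j,-1)$ because $b_j-b_i\ge 1$ along every cover, the $\poset'$-down-edges $(j,i,1)$ because $b_j-b_i=1$ there --- so $\graph{\poset'}$ has no negative cycle. For ``only if'', let $p$ be the shortest-path distances in $\graph{\poset'}$ from $-\infty$, finite everywhere since $-\infty$ reaches every node along up-edges; these are integral, satisfy $p(v)\le p(u)+w$ on every edge, and $p(-\infty)=0$. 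Setting $b_i:=-p(i)$, the up-edges force $b_j\ge b_i+1$ along every cover, so (using the up-paths $-\infty\to\cdots\to i$ and $i\to\cdots\to\infty$) $0<b_i<b_\infty$, whence $\b$ is a genuine interior lattice point; the $\poset'$-down-edges force $b_j\le b_i+1$, that is $b_j=b_i+1$, so $\b$ is sharp along $\poset'$. Summing along one, and then an arbitrary, maximal longest chain of $\posetTopBottom$ (each of length $r$, running from $-\infty$ to $\infty$) yields the companion fact: an interior lattice point $\b$ is sharp along every covering pair occurring in a longest chain of $\posetTopBottom$ if and only if $b_\infty=r$.

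For the implication ``$\poset$ level $\Rightarrow$ the stated condition'', let $\path$ satisfy condition $N$ with $\graph{\poset'(\path)}$ having no negative cycle, and put $\Qc:=\poset'(\path)\cup\{\text{longest chains in }\posetTopBottom\}$. By the dictionary there is an interior lattice point $\b$ sharp along $\poset'(\path)$. By levelness in the form of Remark~\ref{rem:LevelExplanation} there is an interior lattice point $\y$ at height $r$ with $\b-\y\in\cone(\order(\poset))$; this membership gives $y_j-y_i\le b_j-b_i$ along every cover, so for a covering pair $i\lessdot_{\poset'(\path)}j$ we get $1\le y_j-y_i\le b_j-b_i=1$, hence $\y$ is also sharp along $\poset'(\path)$. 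Since $y_\infty=r$, the companion fact makes $\y$ sharp along the longest chains of $\posetTopBottom$ too, hence sharp along $\Qc$, and the dictionary returns that $\graph{\Qc}$ has no negative cycle.

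For the converse, assume $\poset$ is not level. By Lemma~\ref{lem:Miyazaki-zigzag} (the empty sequence already gives $r_{\max}\ge r$) we have $r_{\max}>r$, so pick $\path$ with $r(\path)=r_{\max}$ and put $\Qc:=\poset'(\path)\cup\{\text{longest chains in }\posetTopBottom\}$. By Lemma~\ref{lem:miya} the point $\y(\path)$ is an interior lattice point which, as a consequence of that lemma, is sharp along every covering pair occurring in $\path$, in particular along $\poset'(\path)$; hence $\graph{\poset'(\path)}$ has no negative cycle, so $\path$ is admissible for the condition. Suppose, for contradiction, that $\graph{\Qc}$ also had no negative cycle. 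Then the dictionary and the companion fact give an interior lattice point $\b$ with $b_\infty=r$ that is sharp along $\poset'(\path)$; consequently $b_{j_m}-b_{i_m}=\rk[i_m,j_m]$ along each up-step $i_m\lessdot\cdots\lessdot j_m$ of $\path$, while $b_{j_{m-1}}-b_{i_m}\ge\rk[i_m,j_{m-1}]$ along each down-step since $\b$ is interior and $i_m<_{\posetTopBottom}j_{m-1}$ (using condition $N$, with $j_0=\infty$, $i_{t+1}=-\infty$). Substituting these into $r(\path)=\sum_{m=1}^{t+1}\rk[i_m,j_{m-1}]-\sum_{m=1}^{t}\rk[i_m,j_m]$ and telescoping gives $r(\path)\le b_{j_0}-b_{i_{t+1}}=b_\infty-b_{-\infty}=r$, contradicting $r(\path)=r_{\max}>r$. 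Hence $\graph{\Qc}$ has a negative cycle, so $\path$ witnesses the failure of the condition; with the previous paragraph this is precisely the contrapositive of ``$\poset$ level $\Rightarrow$ the stated condition''.

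The step I expect to carry the real weight is the dictionary --- extracting an integral feasible potential from the absence of negative cycles and matching the boundary covers at $\pm\infty$ with the normalization $b_{-\infty}=0$, $b_\infty=\text{height}$ --- after which each implication is only a few lines. One should also make sure the hypothesis ``$\graph{\poset'(\path)}$ has no negative cycle'' is invoked exactly where needed, namely to produce the interior point $\b$ that is fed into the levelness criterion; in the converse direction that graph is automatically negative-cycle-free via $\y(\path)$.
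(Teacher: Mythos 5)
Your proof is correct and follows essentially the same route as the paper: the ``dictionary'' you isolate is exactly what the paper obtains by running Bellman--Ford on $\graph{\poset'(\path)}$ (producing the interior point $\b$), the forward direction then applies levelness and the sharpness transfer to a height-$r$ point $\tilde{\b}$ just as you do, and the converse likewise proceeds via Lemma~\ref{lem:Miyazaki-zigzag} and Miyazaki's $\y(\path)$. The only real deviation is that where the paper simply asserts that $\Gamma(\poset'(\path)\cup\{\text{longest chains}\})$ has a negative cycle once $r(\path)>r$ (implicitly the closed walk formed by a longest chain followed by $\path$ reversed, of total weight $r-r(\path)<0$), you derive the same conclusion by contradiction through the dictionary and telescoping --- a slightly longer but equally valid argument that makes the paper's unexplained step explicit.
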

	\begin{proof}
		%
		To show the first direction, let's assume $\order(\poset)$ is level. 
		Associated to $\Gamma(\poset'(\path))$, there is a $\b \in \int(\coneOPi)\cap \Z^{d+1}$. To see this, run the Bellman--Ford algorithm on $\Gamma(\poset'(\path))$. The Bellman--Ford algorithm minimizes the distance from $-\infty$ to any point $i$. After multiplying all entries by $-1$, the algorithm will return a point $\b$ such that for all covering pairs $ j \gtrdot i$ in $\posetTopBottom$, we have $b_j \geq b_i +1$ and moreover for any weighted edge $(j,i,1)$ in $\Gamma(\poset'(\path))$ we have $b_i \geq b_j - 1$. The first condition implies that $\b \in \int(\coneOPi)\cap \Z^{d+1}$.
		Moreover, it follows from these conditions that for any weighted edge $(j,i,1)$ in $\Gamma(\poset'(\path))$, we have $b_j -b_i=1$. Since $\poset$ is level, there exists a point $\tilde {\b} \in \int(\coneOPi) \cap \Z^{d+1}$ on height $r$ such that $\b - \tilde {\b} \in \coneOPi \cap \Z^{d+1}$. This implies that for every covering pair $j \gtrdot i$ in $\posetTopBottom$, we have 
		\[
		b_j - b_i \geq \tilde{b}_j - \tilde{b}_i.
		\]
		Hence for any weighted edge $(j,i,1)$ in $\Gamma(\poset'(\path))$, we have $\tilde{b_j} -\tilde{b_i}=1$.
		Since $\tilde{\b}$ is on height $r$, we also know that $\tilde{\b}$ is sharp along the longest chains in $\posetTopBottom$.
		Since $\mathcal{G}: = \Gamma(\poset' \cup \{\text{longest chains in } \posetTopBottom \})$ is a subgraph of $\Gamma_{\tilde{\b}}$,
		using Lemma~\ref{lem:NoNegCycles}, we get that 
		$\mathcal{G}$ does not contain a negative cycle. 
		
		We prove the other direction by contraposition. Let's assume that $\poset$ is not level. Then there exists a
		path $\path$ with condition $N$ such that $r(\path)=r_{\max} > r$.
		Moreover,  $\graph{\poset'(\path)}$ is a subgraph of $\Gamma_{\y(\path)}$.
		Hence by using Lemma \ref{lem:NoNegCycles}, it follows that  $\graph{\poset'(\path)}$ has no negative cycle.
		On the other hand, since $\rk(\posetTopBottom)=r$, it follows that
		$\Gamma(\poset'(\path) \cup \{\text{longest chains in }\posetTopBottom \})$ has a negative cycle.
		%
	\end{proof}
	This directly implies the following:
	\begin{corollary}
		\label{cor:GeneralGamma'}
		Let $\poset$ be a finite poset. $\poset$ is level if and only if for all $\Gamma(\poset')$ that do not have a negative cycle, the digraph $\Gamma(\poset' \cup \{\textnormal{longest chains in } \posetTopBottom\})$ does not have a negative cycle.
	\end{corollary}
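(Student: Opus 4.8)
The plan is to read this off Theorem~\ref{thm:levelchar}: its criterion is precisely the criterion of the corollary, restricted to the particular subposets $\poset'(\path)$ arising from paths $\path$ with condition $N$, so only two small observations are needed.

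For the ``if'' direction, suppose that for every admissible subposet $\poset'\subseteq\posetTopBottom$ (that is, one with $i\lessdot_{\poset'}j\Rightarrow i\lessdot_{\posetTopBottom}j$) such that $\graph{\poset'}$ has no negative cycle, the graph $\graph{\poset'\cup\{\textnormal{longest chains in }\posetTopBottom\}}$ also has no negative cycle. First I would observe that each $\poset'(\path)$ is such an admissible subposet: its cover relations are the edges of the up-paths of $\path$, and each up-path is a longest chain in an interval $[i_m,j_m]\subseteq\posetTopBottom$, whose consecutive elements are necessarily cover relations of $\posetTopBottom$ (otherwise the chain would not be longest). Hence the assumed condition, specialized to $\poset'=\poset'(\path)$, is exactly the hypothesis of Theorem~\ref{thm:levelchar}, and that theorem yields that $\poset$ is level.

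For the ``only if'' direction, assume $\poset$ is level and let $\poset'$ be any admissible subposet with $\graph{\poset'}$ free of negative cycles. Here I would simply rerun the first half of the proof of Theorem~\ref{thm:levelchar}, which never used that $\poset'$ came from a path: applying the Bellman--Ford algorithm to $\graph{\poset'}$ from $-\infty$ and negating the output produces $\b\in\int(\coneOPi)\cap\Z^{d+1}$ with $b_j-b_i=1$ along every weighted edge $(j,i,1)$ of $\graph{\poset'}$; levelness then furnishes $\tilde{\b}\in\int(\coneOPi)\cap\Z^{d+1}$ at height $r$ with $\b-\tilde{\b}\in\coneOPi$, which forces $\tilde{b}_j-\tilde{b}_i=1$ along those same edges and sharpness of $\tilde{\b}$ along the longest chains of $\posetTopBottom$. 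Consequently $\graph{\poset'\cup\{\textnormal{longest chains in }\posetTopBottom\}}$ is a subgraph of $\Gamma_{\tilde{\b}}$, so by Lemma~\ref{lem:NoNegCycles} it has no negative cycle.

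The only subtlety is that the ``only if'' half of the corollary is formally stronger than the ``only if'' half of Theorem~\ref{thm:levelchar}, since it quantifies over all admissible $\poset'$ rather than just the $\poset'(\path)$; but the portion of the theorem's proof used above makes no use of the origin of $\poset'$, so nothing new is required, and I do not expect a genuine obstacle here.
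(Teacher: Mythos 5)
Your proposal is correct and matches the paper's own (very terse) proof: one direction follows by applying Theorem~\ref{thm:levelchar} after noting that the $\poset'(\path)$ are admissible subposets, and the other direction follows by observing that the first half of the proof of Theorem~\ref{thm:levelchar} (Bellman--Ford, then levelness produces $\tilde{\b}$, then Lemma~\ref{lem:NoNegCycles}) never uses that $\poset'$ came from a path with condition $N$. The subtlety you flag at the end --- that the ``only if'' half of the corollary is formally stronger than that of the theorem and hence needs the proof rerun rather than the statement cited --- is exactly the point the paper's two-line proof implicitly relies on.
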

	\begin{proof}
		One direction directly follows from Theorem~\ref{thm:levelchar}, so we only need to show that if $\poset$ is level and if $\Gamma(\poset')$ does not contain a negative cycle, then  $\Gamma(\poset' \cup \{\textnormal{longest chains in } \posetTopBottom\})$ does not have a negative cycle. However, this follows from the proof of Theorem~\ref{thm:levelchar}.
	\end{proof}
	\begin{remark}
		\label{rem:CorExplanation}
		For practical purposes this corollary is more convenient than the previous characterization. This is due to the fact that it is hard to determine (all) paths with condition $N$. We will use this corollary to give an infinite family of level posets, see Theorem~\ref{thm:OrdSumLevel}.
	\end{remark}
Moreover, we get that --- given the input $\poset$ --- determining the levelness of order polytopes is in $\operatorname{co-NP}$, i.e., the complexity class having a short certificate for rejection. For more about complexity classes, we refer to \cite[Sec. 2.5]{schrijver1986}.
	\begin{corollary}
		\label{cor:co-NP}
		Levelness of order polytopes is in $\operatorname{co-NP}$.
	\end{corollary}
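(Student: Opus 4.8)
The plan is to exhibit, for every poset $\poset$ that is \emph{not} level, a polynomial-size certificate that a polynomial-time verifier can check; this places non-levelness in $\operatorname{NP}$ and hence levelness in $\operatorname{co-NP}$. By Corollary~\ref{cor:GeneralGamma'}, $\poset$ fails to be level precisely when there is a subposet $\poset' \subseteq \posetTopBottom$ with $i \lessdot_{\poset'} j \Rightarrow i \lessdot_{\poset} j$ such that $\Gamma(\poset')$ has no negative cycle but $\Gamma(\poset' \cup \{\textnormal{longest chains in }\posetTopBottom\})$ does. Such a $\poset'$ is specified by its set of cover relations, of which there are at most $\binom{d+2}{2}$, so the certificate has size polynomial in the encoding of $\poset$.

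It then remains to check that verification is polynomial. First, from the input one builds $H(\poset)$ and, via a topological sort of the Hasse diagram of $\posetTopBottom$, computes $\height(i)$ and $\depth(i)$ for every element by the usual dynamic program; in particular $r = \codeg(\order(\poset)) = \rk(\posetTopBottom)$ is read off, and a cover relation $j \gtrdot i$ of $\posetTopBottom$ lies on a longest chain if and only if $\height(i) + \depth(j) + 1 = \rk(\posetTopBottom)$, so the edge set ``longest chains in $\posetTopBottom$'' is identified in polynomial time. Given $\poset'$, the weighted digraphs $\Gamma(\poset')$ and $\Gamma(\poset' \cup \{\textnormal{longest chains in }\posetTopBottom\})$ are assembled directly from Definition~\ref{def:WeightedGraphs}; each has $d+2$ nodes and $O(d^2)$ edges. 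Running the Bellman--Ford algorithm on $\Gamma(\poset')$ decides whether it has a negative cycle, and running it on $\Gamma(\poset' \cup \{\textnormal{longest chains in }\posetTopBottom\})$ decides whether that digraph has one; by the stated properties of Bellman--Ford this costs $O(\# V \cdot \# E)$. The verifier accepts the certificate exactly when the first graph has no negative cycle and the second has one.

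By Corollary~\ref{cor:GeneralGamma'}, a certificate accepted by this verifier exists if and only if $\poset$ is not level. Hence non-levelness of order polytopes lies in $\operatorname{NP}$, and therefore levelness of order polytopes lies in $\operatorname{co-NP}$.

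The only point needing a little care is negative-cycle detection: Bellman--Ford as usually stated detects negative cycles reachable from its chosen source, so to certify a cycle lying anywhere in the digraph one should either adjoin a universal weight-zero source joined to all nodes, or exploit that $-\infty$ already reaches every node of $\posetTopBottom$ along the weight-$(-1)$ up-edges. I do not expect any real obstacle here; the substance of the argument is entirely contained in Corollary~\ref{cor:GeneralGamma'}, and what remains is the routine observation that computing heights, depths, and longest chains and then invoking Bellman--Ford twice is polynomial in the size of $\poset$.
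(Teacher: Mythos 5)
Your proof is correct and follows essentially the same route as the paper: the certificate for non-levelness is a subposet $\poset'$ witnessing the failure of Corollary~\ref{cor:GeneralGamma'}, and verification consists of two runs of the Bellman--Ford algorithm. You simply spell out the routine details (size bound on the certificate, how to compute $\rk(\posetTopBottom)$ and the longest-chain edges, and the source-reachability caveat for negative-cycle detection) that the paper leaves implicit.
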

	\begin{proof}
		If $\order(\poset)$ is not level, then there exists a short certificate $\poset' (\path)$ such that $\graph{\poset'(\path)}$ does not have a negative cycle but $\graph{ \poset '(\path) \cup \{\textnormal{longest chains in } \posetTopBottom \}}$ has a negative cycle.
		This will be tested by the Bellman--Ford algorithm in polynomial time, since we need to run the Bellman--Ford algorithm twice, once for $\graph{\poset'(\path)}$ and once for $\graph{ \poset '(\path) \cup \{\textnormal{longest chains in } \posetTopBottom \}}$. Therefore, we can verify non-levelness in polynomial time.
	\end{proof}

	We now explicitly describe the algorithm underlying Corollarly~\ref{cor:GeneralGamma'}:
	\begin{algorithm}
		\label{alg:Miyazaki}	
		\text{ }\newline
		For $\graph{\poset'}\subset H(\poset)$: 
		
		\qquad Run Bellman--Ford for $\graph{\poset'}$
		
		\qquad If negative cycle:
		
		\qquad \qquad 1=1
		
		\qquad Else:
		
		\qquad \qquad Run Bellman--Ford for  $\graph{\poset' \cup \{\textnormal{longest chains in } \posetTopBottom\}}$
		
		\qquad \qquad If negative cycle:
		
		
		
		
		
		
		\qquad \qquad\qquad  Return NOT LEVEL
		
		\qquad \qquad Else:
		
		\qquad \qquad\qquad 1=1 
		
		\qquad Return LEVEL
	\end{algorithm}
	
	\begin{theorem}
		\label{thm:LevelCharAlgMiyazaki}
		A poset $\poset$ is level if and only if Algorithm~\ref{alg:Miyazaki} returns level.
	\end{theorem}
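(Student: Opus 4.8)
The plan is to recognize that Theorem~\ref{thm:LevelCharAlgMiyazaki} is a bookkeeping reformulation of Corollary~\ref{cor:GeneralGamma'}, so the proof reduces to (i) checking that Algorithm~\ref{alg:Miyazaki} terminates and that its return value is governed by an explicit combinatorial condition, and (ii) matching that condition against the criterion of the corollary.

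First I would address termination and the meaning of the loop. The outer loop ranges over the subposets $\poset'$ of $\posetTopBottom$ subject to the constraint that $i \lessdot_{\poset'} j$ implies $i \lessdot_{\poset} j$; since $\posetTopBottom$ is finite there are only finitely many such $\poset'$, so the loop executes finitely often, and each iteration makes at most two calls to Bellman--Ford on the finite digraphs $\graph{\poset'}$ and $\graph{\poset' \cup \{\textnormal{longest chains in }\posetTopBottom\}}$, each call terminating in $O(\#V\cdot\#E)$ steps by the facts recalled before the algorithm. Hence the procedure halts.

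Next I would pin down the return value. By construction Algorithm~\ref{alg:Miyazaki} returns \textsc{not level} if and only if there is at least one subposet $\poset'$ in the loop for which the Bellman--Ford run on $\graph{\poset'}$ reports no negative cycle while the run on $\graph{\poset' \cup \{\textnormal{longest chains in }\posetTopBottom\}}$ reports a negative cycle; in every other case control falls through to the last line and the output is \textsc{level}. Here I invoke the black-box properties of Bellman--Ford: run from the source $-\infty$ it correctly decides the presence of a negative cycle reachable from the source. The one point worth a sentence of care is that in $\graph{\poset'}$ (and a fortiori after adding longest-chain edges) every node of $\posetTopBottom$ is reachable from $-\infty$ along the weight-$(-1)$ edges, which realize the full Hasse diagram of $\posetTopBottom$ directed upward; so ``reachable negative cycle'' and ``negative cycle'' coincide, and the Bellman--Ford verdict is exactly ``$\graph{\cdot}$ has a negative cycle.''

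Finally I would combine this with Corollary~\ref{cor:GeneralGamma'}: the corollary states that $\poset$ is level if and only if, for every $\poset'$ such that $\graph{\poset'}$ has no negative cycle, the digraph $\graph{\poset' \cup \{\textnormal{longest chains in }\posetTopBottom\}}$ also has no negative cycle. This is precisely the negation of the condition under which the algorithm returns \textsc{not level}. Therefore $\poset$ is level $\iff$ the algorithm never returns \textsc{not level} $\iff$ it returns \textsc{level}. I expect no genuine obstacle in this proof, since all the analytic content has already been packaged into Lemma~\ref{lem:NoNegCycles} and Corollary~\ref{cor:GeneralGamma'}; the only subtleties are the interface with Bellman--Ford's single-source reachability (handled by the remark above) and the fact that the enumeration of admissible $\poset'$ is exponential in $d$, which is consistent with the $\operatorname{co-NP}$ membership of Corollary~\ref{cor:co-NP} rather than any polynomial-time assertion.
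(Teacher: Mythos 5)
Your proof is correct and takes the same approach as the paper, which simply cites Corollary~\ref{cor:GeneralGamma'}. The extra care you take regarding termination and the reachability of negative cycles from $-\infty$ fills in details the paper leaves implicit, but the argument is essentially identical.
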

	\begin{proof}
		This directly follows from Corollary~\ref{cor:GeneralGamma'}.
	\end{proof}
	
	\section{A Necessary Condition of Ene, Herzog, Hibi, and Saeedi Madani}
	\label{sec:NecessaryCondition}
	
	We now want to show that \cite[Thm 4.1]{EneHerzogHibi} is a special case of Corollary~\ref{cor:GeneralGamma'}. We first need to define the depth and the height of an element, where we follow again \cite{EneHerzogHibi}. The \emph{height of an element $i\in \poset$}, denoted $\height(i)$ is the maximum length of a chain in $\posetTopBottom$ descending from $i$. Similarly, we define the \emph{depth of an element $i$}, denoted $\depth(i)$, to be the maximum length of a chain in $\posetTopBottom$ ascending from $i$.

	They show that the following is a necessary condition for levelness:
	\begin{theorem}[{\cite[Thm. 4.1]{EneHerzogHibi}.}]
		Suppose $\poset$ is level. Then
		\begin{equation}
			\label{eq:heightdepth}
			\operatorname{height}(j) + \operatorname{depth} (i) \leq \operatorname{rank} (\posetTopBottom) +1
		\end{equation}
		for all $j \gtrdot i \in \poset$.
	\end{theorem}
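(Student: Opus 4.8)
The plan is to prove the contrapositive: assuming the inequality \eqref{eq:heightdepth} fails at some cover $j \gtrdot i$ in $\poset$, I will exhibit an explicit witness sequence that forces $r_{\max} > r$, and then invoke Miyazaki's characterization (Lemma~\ref{lem:Miyazaki-zigzag}) to conclude that $\poset$ is not level.

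First I would observe that the single covering pair, viewed as the length-one sequence $i_1 := i$, $j_1 := j$ (that is, $t = 1$), satisfies condition $N$ for free: requirement (1) of Definition~\ref{def:Nsequences} reduces to $i <_{\poset} j$, which holds since $j \gtrdot i$, and requirement (2) is vacuous because there are no indices with $1 \le m < n \le 1$. So this is a legitimate sequence to plug into the definition of $r(\,\cdot\,)$.

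Next I would compute $r(i_1,j_1)$ straight from the definition, using the conventions $j_0 = \infty$ and $i_2 = -\infty$. This yields
\[
r(i_1,j_1) = \bigl(\rk[i_1,j_0] - \rk[i_1,j_1]\bigr) + \rk[i_2,j_1] = \rk[i,\infty] - \rk[i,j] + \rk[-\infty,j].
\]
Now $\rk[i,\infty]$ is precisely the maximum length of a chain in $\posetTopBottom$ ascending from $i$, i.e.\ $\depth(i)$, and $\rk[-\infty,j]$ is precisely the maximum length of a chain in $\posetTopBottom$ descending from $j$, i.e.\ $\height(j)$; moreover $\rk[i,j] = 1$ because $j$ covers $i$. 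Hence $r(i_1,j_1) = \depth(i) + \height(j) - 1$.

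Finally, if \eqref{eq:heightdepth} fails, then $\height(j) + \depth(i) \ge \rk(\posetTopBottom) + 2 = r + 2$, so $r(i_1,j_1) = \height(j) + \depth(i) - 1 \ge r+1 > r$, whence $r_{\max} \ge r(i_1,j_1) > r$ and Lemma~\ref{lem:Miyazaki-zigzag} gives that $\poset$ is not level. I do not anticipate a genuine obstacle; the only care needed is in matching the bookkeeping conventions ($j_0 = \infty$, $i_{t+1} = -\infty$, and the index shift in the sum defining $r$) and in checking that $\rk[i,\infty] = \depth(i)$ and $\rk[-\infty,j] = \height(j)$ against the definitions of depth and height recalled just before the theorem. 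One can equivalently package the same witness as a path in $H(\poset)$ — descend a longest chain from $\infty$ to $i$, traverse the cover edge up to $j$, then descend a longest chain from $j$ to $-\infty$ — and apply Proposition~\ref{prop:Miyazaki-zigzag} in place of Lemma~\ref{lem:Miyazaki-zigzag}; the two arguments coincide up to this translation.
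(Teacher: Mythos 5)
The paper does not prove this theorem: it is cited verbatim from \cite{EneHerzogHibi}, and the paper's own contribution here is Theorem~\ref{thm:HibiConditionIsEquivalent}, which reinterprets condition \eqref{eq:heightdepth} rather than deriving it from levelness. So there is no in-paper argument to compare against, but your proof is correct and worth assessing on its own terms. The length-one sequence $i_1 = i$, $j_1 = j$ does satisfy condition $N$ (requirement (2) is vacuous when $t=1$); the computation
\[
r(i,j) = \rk[i,\infty] - \rk[i,j] + \rk[-\infty,j] = \depth(i) - 1 + \height(j)
\]
matches the definitions of height and depth recalled just before the theorem; and once you observe that $r = \codeg(\order(\poset)) = \rk(\posetTopBottom)$ (per the remark following Theorem~\ref{thm:EhrhartPolyofOrder}), the failure of \eqref{eq:heightdepth} forces $r_{\max} \geq r(i,j) \geq r+1 > r$, so Lemma~\ref{lem:Miyazaki-zigzag} gives non-levelness. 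This is a genuinely different route from the source proof in \cite{EneHerzogHibi} (which argues directly with the canonical module), and within the paper's framework it is arguably the most economical derivation available. It also makes transparent that \eqref{eq:heightdepth} is exactly Miyazaki's criterion restricted to the shortest nonempty condition-$N$ sequences, which is the moral of the paper's Theorem~\ref{thm:HibiConditionIsEquivalent} and of the remark that this is the ``$\poset'$ a single edge'' case of Corollary~\ref{cor:GeneralGamma'}. One small caution on your closing sentence: the paper's claim that ``$-r(\path)$ equals the weighted length'' appears to have the sign reversed given its edge-weight conventions (down-edges weight $+1$, up-edges weight $-1$, path running $\infty \to i_1 \to j_1 \to \cdots \to -\infty$), so if you do package your witness as a path in $H(\poset)$ and invoke Proposition~\ref{prop:Miyazaki-zigzag}, keep track of the sign directly rather than trusting that sentence; your primary argument via the definition of $r(\cdot)$ is unaffected.
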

	Our next result shows that this is weaker than Corollary~\ref{cor:GeneralGamma'}. In fact, it is equivalent to Corollary~\ref{cor:GeneralGamma'} where $\poset'$ is a single edge.
	
	\begin{theorem}
		\label{thm:HibiConditionIsEquivalent}
		Let $\poset$ be a finite poset and $r= \operatorname{codeg}(\order(\poset))$.
		The following are equivalent:
		\begin{enumerate}
			\item inequality (\ref{eq:heightdepth}) is satisfied by all covering pairs
			\item for all Hasse edges $j\gtrdot i \in \poset$ there is an integer point $\x \in r\int(\order(\poset))$ such that  $x_j = x_i +1$.
		\end{enumerate}
	\end{theorem}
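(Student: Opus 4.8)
The plan is to pass to the combinatorial model of interior lattice points and argue on chains of $\posetTopBottom$. By Remark~\ref{rem:InteriorLatticePointsStrictlyOrderPreservingMaps}, together with the fact (recalled above) that $r=\rk(\posetTopBottom)$ is the length of a longest chain of $\posetTopBottom$, integer points $\x\in r\int(\order(\poset))$ correspond bijectively to integer-valued maps $f\colon\posetTopBottom\to\{0,1,\dots,r\}$ with $f(-\infty)=0$, $f(\infty)=r$ that are \emph{strictly} order preserving, i.e.\ $a<b\Rightarrow f(a)<f(b)$; under this correspondence the condition $x_j=x_i+1$ becomes $f(j)=f(i)+1$. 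Write $\ell(a,b)$ for the length of a longest chain from $a$ to $b$ in $\posetTopBottom$ when $a\le b$, so that $\height(k)=\ell(-\infty,k)$ and $\depth(k)=\ell(k,\infty)$, the superadditivity $\ell(a,b)+\ell(b,c)\le\ell(a,c)$ holds for $a\le b\le c$, and every strictly order-preserving $f$ as above satisfies $f(b)-f(a)\ge\ell(a,b)$.

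For $(2)\Rightarrow(1)$ I would argue directly. Fix a Hasse edge $j\gtrdot i$ and an $f$ as in (2) with $f(j)=f(i)+1$. A longest chain from $-\infty$ to $j$ forces $\height(j)\le f(j)-f(-\infty)=f(j)$, while a longest chain from $i$ to $\infty$ forces $f(i)\le f(\infty)-\depth(i)=r-\depth(i)$. Combining, $\height(j)\le f(i)+1\le r-\depth(i)+1$, which is exactly inequality~(\ref{eq:heightdepth}).

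For $(1)\Rightarrow(2)$ I would fix a Hasse edge $j\gtrdot i$ and \emph{build} $f$ by prescribing it on the four-element subposet $S=\{-\infty,i,j,\infty\}$ and then extending. Since $\height(j)\ge\height(i)+1$ and $\depth(i)\ge\depth(j)+1$, hypothesis~(\ref{eq:heightdepth}) says precisely that the integer interval $[\,\height(j)-1,\ r-\depth(i)\,]$ is nonempty; pick $c$ in it and set $g(-\infty)=0,\ g(i)=c,\ g(j)=c+1,\ g(\infty)=r$. A short check --- using $\ell(i,j)=1$ for a covering pair, $\ell(-\infty,i)=\height(i)\le\height(j)-1\le c$, $\ell(-\infty,j)=\height(j)\le c+1$, $\ell(j,\infty)=\depth(j)\le\depth(i)-1\le r-c-1$, and $\ell(i,\infty)=\depth(i)\le r-c$ --- shows $g$ is \emph{consistent}, i.e.\ $g(b)-g(a)\ge\ell(a,b)$ for all $a\le b$ in $S$; this is the only place hypothesis~(\ref{eq:heightdepth}) enters. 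Now define, for every $k\in\posetTopBottom$,
\[
f(k):=\max\{\,g(s)+\ell(s,k)\ :\ s\in S,\ s\le k\,\},
\]
which is well defined since $-\infty\in S$. I would then verify the four points: $f$ restricts to $g$ on $S$ (each competing term $g(s')+\ell(s',s)$ is $\le g(s)$ by consistency); $f$ is strictly order preserving (if $a<b$ and $s$ realises $f(a)$ then $f(b)\ge g(s)+\ell(s,b)\ge g(s)+\ell(s,a)+1=f(a)+1$); $f\ge 0$ (the term $s=-\infty$ already gives $\ge 0$); and $f\le r$, more precisely $f(k)\le r-\ell(k,\infty)$, because for $s\le k$ one has $g(s)+\ell(s,k)\le g(s)+\ell(s,\infty)-\ell(k,\infty)\le r-\ell(k,\infty)$ by consistency at the pair $(s,\infty)$. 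Thus $f\colon\posetTopBottom\to\{0,\dots,r\}$ is strictly order preserving with $f(-\infty)=0$, $f(\infty)=r$ and $f(j)=g(j)=g(i)+1=f(i)+1$; restricting to $\poset$ and reading the correspondence backwards yields the point $\x\in r\int(\order(\poset))$ required in (2).

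The main obstacle is the construction in $(1)\Rightarrow(2)$: the naive attempts --- take $f$ minimal from below (forcing $f(i)=\height(i)$) or maximal from above (forcing $f(j)=r-\depth(j)$) --- generally fail, because fixing the value at one endpoint of the edge $i\lessdot j$ cascades through the whole poset and can clash with the value demanded at the other endpoint. Making this clash precise is what produces condition~(\ref{eq:heightdepth}): it is exactly the assertion that the admissible window for $c=f(i)$ is nonempty, and once $g$ is consistent on $S$ the superadditivity of $\ell$ --- via the $\max$-extension above and its dual upper bound $f(k)\le\min\{\,g(s')-\ell(k,s')\ :\ s'\in S,\ s'\ge k\,\}$ --- guarantees that no obstruction remains anywhere in $\posetTopBottom$.
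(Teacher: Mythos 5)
Your proof is correct and takes essentially the same route as the paper: the $(2)\Rightarrow(1)$ direction is the same chain-length argument, and your $(1)\Rightarrow(2)$ construction via the $\max$-extension $f(k)=\max\{g(s)+\ell(s,k):s\in S,\ s\le k\}$ coincides with the paper's recursive labelling when one picks $c=\height(j)-1$; you merely present it more systematically, identify the full admissible window for $c$, and carry out the strict-monotonicity and boundedness checks more explicitly than the paper does.
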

	
	\begin{proof}
		Let's assume that all covering pairs satisfy (\ref{eq:heightdepth}) and fix a covering pair $j \gtrdot i$. We remark that $\operatorname{rank} (\posetTopBottom)$ equals the codegree $r$ of the order polytope $\order(\poset)$. Thus, we need to show that there exists an integer point $\x \in r \int(\order(\poset))$ such that  $x_j = x_i +1$. To create such an $\x$, we can label the elements in $\poset$ using labels from $\{1, 2, \dots, r-1\}$.   We first label $x_j = \operatorname{height}(j)$ and hence $x_i = \operatorname{height}(j) -1$. For $k \in \poset \setminus \{i,j \}$ we label $x_k = - \infty$, and then we recursively relabel by
		\begin{equation}
			\label{eq:label}
			x_k = \begin{cases}  \max\{\height(k), x_i + \length{[i,k]}\} &\text{ if }k>i,\\
				\height(k) &\text{ otherwise.}\\
			\end{cases}
		\end{equation}
		To show that this indeed gives an interior integer point in $r \int \order(\poset)$, we need to show that $r> x_k \geq \height (k)$ for all $k$. We say a label $x_k$ is well-defined if  it satisfies this condition. There are two cases:
		\begin{enumerate}
			\item $k>i$, then (\ref{eq:heightdepth}) ensures that (\ref{eq:label}) only yields well-defined labels;
			\item $k \ngtr i$, then the recursive definition gives us $\height(k)$, which by definition is well-defined.
		\end{enumerate}
		This proves the first direction. 
		
		Now let assume that for all Hasse edges ($j\gtrdot i$) in $\poset$ there exists an integer point $\x \in r \int(\order(\poset))$ such that  $x_j = x_i +1$. Let's fix a covering pair $j \gtrdot i$. Then we have an integer point $\x \in r\int(\order(\poset))$ with $x_j = x_i +1$ and it follows that $\height(j)\leq x_j$. Since we have an integer point in the interior of $r \order(\poset)$, we also get that
		\[
		\depth(i) \leq \rk(\posetTopBottom)- x_i . 
		\]
		Putting everything together, we obtain
		\[
		\height(j) + \depth (i)\leq   \rk(\posetTopBottom) +1 ,
		\]
		as desired.
	\end{proof}

	However, this result is not sufficient. The following example was related to us by Alex Fink, see Figure~\ref{fig:Fink}.

	\begin{remark}
		\label{rem:Fink}
		Let $\poset$ be the poset from Figure~\ref{fig:Fink}. We have that $\codeg(\order(\poset)) = 5$. Moreover, for any covering pair $i \lessdot j$ in $\poset$, there is a minimal element $\x$ on height $5$ with $x_i +1 = x_j$. Thus, by Theorem~\ref{thm:HibiConditionIsEquivalent} the condition of \cite[Thm. 4.1]{EneHerzogHibi} is satisfied. However, $\poset$ is \emph{not} level. The minimal element $\y(9,7,5,3)$ is on height $6$, see Figure~\ref{fig:BadPoint}.
	\end{remark}
	
	\section{Series-Parallel Posets}
	\label{sec:SeriesParallelPosets}

	The goal of this section is to describe a new family of level posets. The main character of this section is the \emph{ordinal sum}.  We follow the notation of~\cite[Sec. 3.2] {StanleyEC1}. Let $\poset_1$ and $\poset_2$ be two posets. Then their ordinal sum $\poset_1 \triangleleft \poset_2$ is the poset with elements from the union $\poset_1 \cup \poset_2$ and with relations $s\leq t$ if
	\begin{itemize}
		\item $s, t \in \poset_2$ with $s\leq_{\poset_2} t$, or
		\item $s, t \in {\poset_1}$ with $s\leq_{\poset_1} t$, or
		\item $s \in {\poset_1}$ and $t \in \poset_2$.
	\end{itemize}
	Posets that can be built up up as ordinal sums of posets are called \emph{series-parallel posets}.
	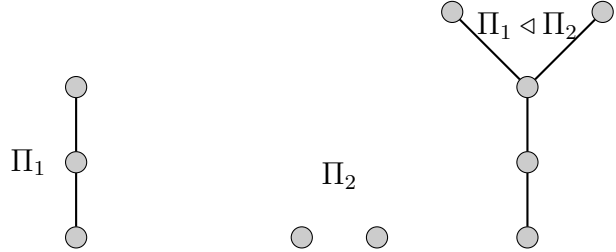
\begin{figure}[h]
		\center
		\begin{tikzpicture}[darkstyle/.style={circle,draw,fill=gray!40,minimum size=20}, scale = .5]
		\node[circle,draw=black,fill=white!80!black,minimum size=20, scale=.4] (1) at (0,4) { };
		\node[circle,draw=black,fill=white!80!black,minimum size=20, scale=.4] (2) at (0,2) { };
		\node[circle,draw=black,fill=white!80!black,minimum size=20, scale=.4] (3) at (0,0) { };
		\draw[thick] (1)--(2)--(3);
		\node[left] at (-.5,2) {$\poset_1$};
		\node[circle,draw=black,fill=white!80!black,minimum size=20, scale=.4] (5) at (8,0) { };
		\node[circle,draw=black,fill=white!80!black,minimum size=20, scale=.4] (6) at (6,0) { };
		\node[above] at (7,1) {$\poset_2$};
		\node[circle,draw=black,fill=white!80!black,minimum size=20, scale=.4] (7) at (12,4) { };
		\node[circle,draw=black,fill=white!80!black,minimum size=20, scale=.4] (8) at (12,2) { };
		\node[circle,draw=black,fill=white!80!black,minimum size=20, scale=.4] (9) at (12,0) { };
		\draw[thick] (7)--(8)--(9);
		\node[above] at (12,5) {$\poset_1 \triangleleft \poset_2$};
		\node[circle,draw=black,fill=white!80!black,minimum size=20, scale=.4] (10) at (10,6) { };
		\node[circle,draw=black,fill=white!80!black,minimum size=20, scale=.4] (11) at (14,6) { };
		\draw[thick] (10)--(7);
		\draw[thick] (11)--(7);
		\end{tikzpicture}
		\caption{Ordinal sum of a chain of length 3 and an antichain of length 2.}
	\end{figure}
	
	We want to show the following result:
	\begin{theorem}
		\label{thm:OrdSumLevel}
		The ordinal sum $\poset ={\poset_1} \triangleleft \poset_2$ of two posets $\poset_1$, $\poset_2$ is level if and only if both $\poset_1$ and $\poset_2$ are level.
	\end{theorem}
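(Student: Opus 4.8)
The plan is to use the digraph characterization from Corollary~\ref{cor:GeneralGamma'}: a poset is level if and only if for every subposet $\poset'$ of $\posetTopBottom$ (whose covering pairs are covering pairs of $\posetTopBottom$) such that $\graph{\poset'}$ has no negative cycle, the enlarged digraph $\graph{\poset' \cup \{\text{longest chains in }\posetTopBottom\}}$ also has no negative cycle. The key structural observation is that in $\poset = \poset_1 \triangleleft \poset_2$, the Hasse diagram of $\posetTopBottom$ decomposes: $\posetTopBottom[\poset_1 \triangleleft \poset_2]$ is obtained by stacking $\underline{\poset_2}^{\,\overline{\phantom{x}}}$ (with a shared cut-vertex) on top of $\overline{\poset_1}^{\,\underline{\phantom{x}}}$. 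More precisely, writing $\hat0$ and $\hat1$ for the extra bottom and top elements, every maximal element of $\poset_1$ is covered in $\poset$ exactly by every minimal element of $\poset_2$; so $\posetTopBottom$ looks like $\underline{\overline{\poset_1}}$ with its top $\hat1$ identified, via the complete bipartite layer between $\max(\poset_1)$ and $\min(\poset_2)$, with a copy of $\underline{\overline{\poset_2}}$ reusing $\hat0$. I would first record that $\rk(\posetTopBottom) = \rk(\underline{\overline{\poset_1}}) + \rk(\underline{\overline{\poset_2}})$, and that the longest chains of $\posetTopBottom$ are exactly concatenations of a longest chain in $\underline{\overline{\poset_1}}$ with a longest chain in $\underline{\overline{\poset_2}}$ (passing through the bipartite layer), since the bipartite layer contributes a single edge to any chain.

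Next I would analyze how cycles in $\graph{\poset'}$ decompose. Because the only edges crossing between the "$\poset_1$-part" and the "$\poset_2$-part" are the bipartite edges between $\max(\poset_1)$ and $\min(\poset_2)$, any directed closed walk either stays entirely inside one part or crosses the bipartite layer an even number of times. The crucial point is that the down-edges $(i,j,-1)$ for $j\gtrdot i$ are always present, while the up-edges $(j,i,1)$ are present only when $j\gtrdot_{\poset'}i$; in the bipartite layer an up-edge and its reverse down-edge have weights $+1$ and $-1$, so traversing the layer up then down (or down then up) between two different vertices of $\max(\poset_1)$, resp.\ $\min(\poset_2)$, contributes net weight $0$. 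This lets me "project" any cycle of $\graph[\poset]{\poset'}$ onto cycles (or wedges of cycles) of $\graph[\poset_1]{\poset_1'}$ and $\graph[\poset_2]{\poset_2'}$ for the induced subposets $\poset_i' := \poset'|_{\text{part }i}$, preserving total weight. The forward direction (if $\poset$ level then $\poset_1,\poset_2$ level) is then almost immediate: a would-be bad $\poset_i'$ for $\poset_i$ can be lifted to a bad $\poset'$ for $\poset$ by taking the union with a longest chain of the other $\underline{\overline{\poset_j}}$ through the bipartite layer; the no-negative-cycle hypothesis is preserved and the new negative cycle persists. For the converse, given a bad $\poset'$ for $\poset$ (no negative cycle in $\graph{\poset'}$, but a negative cycle $C$ after adjoining the longest chains), I project $C$ and the longest-chain edges to one of the two parts, obtaining a negative closed walk there after adjoining $\poset_j$'s longest chains, while $\graph[\poset_j]{\poset_j'}$ still has no negative cycle; a negative wedge of cycles forces an actual negative cycle, contradicting levelness of $\poset_j$.

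The main obstacle I anticipate is the bookkeeping in the projection step: I must check that when $\poset'$ has no negative cycle then neither does $\graph[\poset_i]{\poset_i'}$ (this should follow since $\graph[\poset_i]{\poset_i'}$ embeds into $\graph[\poset]{\poset'}$ as an induced subgraph on the part-$i$ vertices, using that no Hasse up-edge of $\poset_i'$ is lost), and — more delicately — that a negative cycle in the enlarged graph of $\poset$ does not "hide" entirely in the bipartite layer or split its negativity evenly between the two parts in a way that leaves each projected piece non-negative. I expect to handle this by a weight-accounting argument: write the total weight of $C$ as the sum of its part-$1$ weight, its part-$2$ weight, and its bipartite-layer weight, show the bipartite contribution is $\geq 0$ (each up-step $+1$ is paid for by the matching down-step through $\posetTopBottom$), hence one of the two part-weights is negative; then close up that partial walk into a genuine closed walk in $\graph[\poset_i]{\poset_i' \cup \{\text{longest chains}\}}$ using the shared endpoints in $\max(\poset_1)$ or $\min(\poset_2)$ together with the longest-chain edges already adjoined. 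Finally I would note Proposition~\ref{prop:HStarOrdinalSum} is consistent with this (though not needed for the proof) and that an induction gives the analogous statement for iterated ordinal sums.
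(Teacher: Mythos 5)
Your plan matches the paper's proof in all essentials: both directions go through Corollary~\ref{cor:GeneralGamma'} by lifting/projecting weighted digraphs across the bipartite layer between $\max(\poset_1)$ and $\min(\poset_2)$ (via the same two quotient maps collapsing one side), with the identical weight-accounting argument (the paper shows the crossing edges contribute exactly $0$ because up- and down-crossings pair off; your weaker ``$\geq 0$'' also suffices) and the same wedge-of-cycles caveat when pushing a cycle through the quotient. One tiny slip that does not affect the argument: $\rk(\posetTopBottom) = \rk(\addBottom{\overline{\poset_1}}) + \rk(\addBottom{\overline{\poset_2}}) - 1$ rather than the sum, since the bipartite layer is a single cover step in $\posetTopBottom$ but absorbs both the top edge of $\addBottom{\overline{\poset_1}}$ and the bottom edge of $\addBottom{\overline{\poset_2}}$.
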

	\begin{proof}
		
		We prove the first direction by contraposition. So let's assume that $\poset_1$ is not level. 
		By Corollary \ref{cor:GeneralGamma'},
		there exists a weighted digraph 
		$\Gamma_{\poset_1}$ with nodes coming from $\addBottom{\overline{\poset_1}}$ which does not contain a negative cycle, but the weighted directed graph $\Gamma_{\poset_1} \cup \{ \text{longest chains in } \addBottom{\overline{\poset_1}}\}$ has a negative cycle.
		However, we also get that $\Gamma_{\poset}$ has a weighted digraph with up-edges of weight $1$ only coming from up-edges of $\Gamma_{\poset_1}$ does not contain a negative cycle, but $\Gamma_{\poset} \cup \{ \text{longest chains in } \posetTopBottom\}$ contains one, proving that $\poset_1 \triangleleft \poset_2$ is not level. The case where $\poset_2$ is not level follows analogously.

		We prove the other direction again by contraposition. So let's assume that $\poset_1 \triangleleft \poset_2$ is not level. 	By Corollary \ref{cor:GeneralGamma'}, there exists a weighted digraph $\Gamma$ with nodes coming from $\posetTopBottom$ such that $\Gamma$ does not have a negative cycle and  $\Gamma \cup \{\textnormal{longest chains in } \posetTopBottom\}$ has a negative cycle,
		where $\Gamma \cup \{\textnormal{longest chains in } \posetTopBottom\}$ is the weighted digraph obtained from $\Gamma$ by adding down edges of weight 1 along longest chains in $\posetTopBottom$.
		In order to show that either $\poset_1$ or $\poset_2$ are not level, we will construct graphs $\Gamma_{\poset_1}$ and $\Gamma_{\poset_2}$ without negative cycles such that adding  down edges of weight 1 along longest chains creates a negative cycle. The following two quotient maps will be essential for this:
		\begin{align*}
					&\addBottom{\overline{{\poset_1} \triangleleft {\poset_2}}} \stackrel{q_1}{ \twoheadrightarrow} \addBottom{\overline{{\poset_1} \triangleleft {\poset_2}}} / (p_2 \sim p_2' \sim \infty) \quad\cong \addBottom{\overline{{\poset_1}}} \\
			&\addBottom{\overline{{\poset_1} \triangleleft {\poset_2}}}  \stackrel{q_2}{\twoheadrightarrow} \addBottom{\overline{{\poset_1} \triangleleft {\poset_2}}} / (p_1 \sim p_1' \sim -\infty) \,\cong \addBottom{\overline{{\poset_2}}},
		\end{align*}
		where $p_1,p_1' \in \poset_1$ and $p_2,p_2' \in \poset_2$.

		\begin{figure}[h]
			\center
			\begin{tikzpicture}[darkstyle/.style={circle,draw,fill=gray!40,minimum size=20}, scale = .5]
			\node[circle,draw=black,fill=white!80!black,minimum size=20, scale=.4] (0) at (0,6) {$\infty$ };
			\node[circle,draw=black,fill=white!80!black,minimum size=20, scale=.4] (1) at (0,4) { };
			\node[circle,draw=black,fill=white!80!black,minimum size=20, scale=.4] (2) at (0,2) { };
			\node[circle,draw=black,fill=white!80!black,minimum size=20, scale=.4] (3) at (0,0) { };
			\node[circle,draw=black,fill=white!80!black,minimum size=20, scale=.4] (4) at (0,-2) { -$\infty$ };
			\draw[thick] (0)--(1)--(2)--(3)--(4);
			\node[left] at (-.5,2) {$\addBottom{\overline{{\poset_1}}}$};
			\node[circle,draw=black,fill=white!80!black,minimum size=20, scale=.4] (12) at (7,2) {$\infty$};
			\node[circle,draw=black,fill=white!80!black,minimum size=20, scale=.4] (5) at (8,0) { };
			\node[circle,draw=black,fill=white!80!black,minimum size=20, scale=.4] (6) at (6,0) { };
			\node[circle,draw=black,fill=white!80!black,minimum size=20, scale=.4] (13) at (7,-2) {-$\infty$};
			\node[above] at (7,2.5) {$\addBottom{\overline{{\poset_2}}}$};
			\draw[thick] (5)--(12) --(6);
			\draw[thick] (5)--(13) --(6);
			\node[circle,draw=black,fill=white!80!black,minimum size=20, scale=.4] (7) at (12,4) { };
			\node[circle,draw=black,fill=white!80!black,minimum size=20, scale=.4] (8) at (12,2) { };
			\node[circle,draw=black,fill=white!80!black,minimum size=20, scale=.4] (9) at (12,0) { };
			\draw[thick] (7)--(8)--(9);
			\node[above] at (12,5) {$\addBottom{\overline{{\poset_1} \triangleleft {\poset_2}}}$};
			\node[circle,draw=black,fill=white!80!black,minimum size=20, scale=.4] (10) at (10,6) { };
			\node[circle,draw=black,fill=white!80!black,minimum size=20, scale=.4] (11) at (14,6) { };
			\node[circle,draw=black,fill=white!80!black,minimum size=20, scale=.4] (14) at (12,8) {$\infty$ };
			\node[circle,draw=black,fill=white!80!black,minimum size=20, scale=.4] (15) at (12,-2) {$-\infty$ };
			\draw[thick] (10)--(14)--(11);
			\draw[thick] (9)--(15);
			\draw[thick] (10)--(7);
			\draw[thick] (11)--(7);
			\end{tikzpicture}
			\caption{Original poset (on the right) and the two quotient posets (on the left and in the middle).}
			\label{fig:QuotientPoset}
		\end{figure}
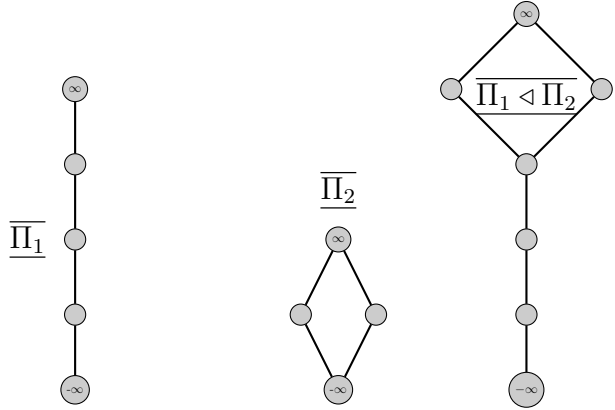

		Note that these quotient maps also induce weighted directed graphs $\Gamma_{\poset_1}$ and $\Gamma_{\poset_2}$ on the underlying posets $\addBottom{\overline{{\poset_1}}}$ and $\addBottom{\overline{{\poset_2}}}$, respectively. We will show the following:
		\begin{enumerate}
			\item Both $\Gamma_{\poset_1}$ and $\Gamma_{\poset_2}$ do not have a negative cycle
			\item Either $\Gamma_{\poset_1} \cup \{\text{longest chains in }{\addBottom{\overline{{\poset_1}}}} \}$ or $\Gamma_{\poset_2} \cup \{\text{longest chains in } {\addBottom{\overline{{\poset_2}}}} \}$ or both have a negative cycle.
		\end{enumerate}
		This implies that either $\poset_1$ or $\poset_2$ or both cannot be level proving the claim. The first claim follows by contraposition. If either $\Gamma_{\poset_1}$ or $\Gamma_{\poset_2}$ had a negative cycle, then one can \emph{lift} this cycle to obtain a negative cycle in $\Gamma$. This is due to the fact that every maximal element in $\poset_1$ is comparable to every minimal element in $\poset_2$, together with the fact that every up-edge has the same weight, namely $-1$. 
		
		Now let's prove the second claim. We remark that longest chains in $\addBottom{\overline{{\poset_1} \triangleleft {\poset_2}}}$ are concatenations of longest chains in $\addBottom{\overline{{\poset_1}}}$ and longest chains in $\addBottom{\overline{{\poset_2}}}$ and vice versa. This means that 
		\[
		\operatorname{im_{\poset_1}}(\Gamma \cup \{ \text{longest chains in }{\addBottom{\overline{{\poset_1} \triangleleft {\poset_2}}}}\}) = \operatorname{im}_{\poset_1}(\Gamma)\cup \{\text{longest chains in } {\addBottom{\overline{{\poset_1}}}}\}
		\]
		and 
		\[
		\operatorname{im_{\poset_2}}(\Gamma \cup \{ \text{longest chains in }{\addBottom{\overline{{\poset_1} \triangleleft {\poset_2}}}}\}) = \operatorname{im}_{\poset_2}(\Gamma)\cup \{\text{longest chains in } {\addBottom{\overline{{\poset_2}}}}\},
		\]
		where $\operatorname{im_{\poset_1}}$ (or $\operatorname{im_{\poset_2}})$ denotes the image of the quotient map onto $\addBottom{\overline{{\poset_1}}}$ (or $\addBottom{\overline{{\poset_2}}}$). Moreover, if a negative cycle of $\Gamma \cup \{\text{longest chains in }{\addBottom{\overline{{\poset_1} \triangleleft {\poset_2}}}} \}$ is entirely contained in $q_1^{-1}(\addBottom{{{\poset_1}}})~\cup~\{\text{min. elt's of } \poset_2\}$ or $q_2^{-1}({\overline{{\poset_2}}})~\cup~\{\text {max. elt's of } \poset_1\}$, then clearly the image also has a negative cycle. (Caveat: After forming the quotient map, the cycle might become a wedge of cycles. But since the total weight of the original cycle is the sum of the weights of the cycle in the image, at least one of these cycles in the wedge has to be negative.)
		
		So we only need to consider the case where a negative cycle contains edges contained in $\poset_1$ and in $\poset_2$. We can cover the cycle into the part, whose edges are entirely in $q_1^{-1}(\addBottom{{{\poset_1}}})~\cup~\{\text{min. elt's of } \poset_2\}$, and a part whose edges are in $q_2^{-1}({\overline{{\poset_2}}})~\cup~\{\text {max. elt's of } \poset_1\}$. Note that the edges between $\poset_1$ and $\poset_2$ appear in both parts. Therefore, the total weight $w$ of the cycle equals
		\[
		0>w = w_{\addBottom{\overline{{\poset_1}}}} +w_{\addBottom{\overline{{\poset_2}}}} - w_{{\poset_1}{\poset_2}},
		\]
		where $w_{\addBottom{\overline{{\poset_1}}}}$ and $w_{\addBottom{\overline{{\poset_2}}}}$ are the weights of the parts in the preimage of $\addBottom{\overline{{\poset_1}}}$ and $\addBottom{\overline{{\poset_2}}}$, respectively. The weight of the connecting edges between $\poset_1$ and $\poset_2$ is denoted $w_{{\poset_1}{\poset_2}}$. This weight is 0, since there are as many up- as there are down-edges and the weights are $-1$ and $1$, respectively. Therefore, either $w_{\overline{{\poset_1}}}$ or $w_{\overline{{\poset_2}}}$ or both are negative. If $w_{\addBottom{\overline{{\poset_1}}}}$ is negative, applying the quotient map gives us a wedge of cycles in $\addBottom{\overline{{\poset_1}}}$ with negative weight. Hence it contains at least one negative cycle. The case where $w_{\addBottom{\overline{{\poset_2}}}}$ is negative is similar. Therefore, we have seen that either $\poset_1$ or $\poset_2$ is not level proving the claim.

	\end{proof}
	
	For the remainder of this section, let $\poset={\poset_1} \triangleleft \poset_2$. We will first give a geometric description of the order polytope and the chain polytope of $\poset$ in terms of the order and chain polytopes of $\poset_1$ and $\poset_2$, respectively.
	
	\begin{lemma}
		\label{lem:OrdSumChainDesc}
		Let $\chain({\poset})$, $\chain({\poset_1})$, $\chain({\poset_2})$ be the chain polytopes of $\poset$, ${\poset_1},$ and $\poset_2$, respectively. Then  
		\begin{equation}
			\label{eq:ordsumchainpoly}
			\chain({\poset})=\operatorname{conv}\{\chain({\poset_1}) \times \mathbf 0_{\poset_2} \cup \mathbf 0_{\poset_1} \times \chain({\poset_2})  \} =  \chain({\poset_1}) \oplus \chain({\poset_2}),
		\end{equation}
		where $\oplus$ is the \emph{free sum} of $\chain({\poset_1})$ and $\chain({\poset_2})$.
	\end{lemma}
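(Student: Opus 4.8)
The plan is to read off the identity from Stanley's description of the vertices of a chain polytope (Theorem~\ref{thm:VerticesChainPoly}) together with an analysis of the antichains of an ordinal sum. Observe first that the second equality in~(\ref{eq:ordsumchainpoly}) is merely the definition of the free sum: both $\chain(\poset_1)$ and $\chain(\poset_2)$ contain the origin $\mathbf{0} = \indicator{\emptyset}$, so under the identification $\R^{\poset} \cong \R^{\poset_1} \times \R^{\poset_2}$ we have $\chain(\poset_1) \oplus \chain(\poset_2) = \conv\big((\chain(\poset_1) \times \mathbf{0}_{\poset_2}) \cup (\mathbf{0}_{\poset_1} \times \chain(\poset_2))\big)$ by definition. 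So the content is the first equality.

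The combinatorial key is that since every element of $\poset_1$ lies below every element of $\poset_2$ in $\poset = \poset_1 \triangleleft \poset_2$, any $p_1 \in \poset_1$ and $p_2 \in \poset_2$ are comparable. Hence an antichain of $\poset$ is contained entirely in $\poset_1$ or entirely in $\poset_2$, and conversely every antichain of $\poset_1$ and every antichain of $\poset_2$ is an antichain of $\poset$. Under the above identification this means that the characteristic vectors of the antichains of $\poset$ are exactly the vectors $(\indicator{A_1}, \mathbf{0})$ with $A_1$ an antichain of $\poset_1$, together with the vectors $(\mathbf{0}, \indicator{A_2})$ with $A_2$ an antichain of $\poset_2$ (the empty antichain, giving the origin, appearing on both lists). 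By Theorem~\ref{thm:VerticesChainPoly} the first family is the vertex set of $\chain(\poset_1) \times \mathbf{0}_{\poset_2}$, the second is the vertex set of $\mathbf{0}_{\poset_1} \times \chain(\poset_2)$, and their union is the vertex set of $\chain(\poset)$. Passing to convex hulls proves the first equality in~(\ref{eq:ordsumchainpoly}).

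I do not expect a genuine obstacle here; the only points to watch are the bookkeeping around the shared origin (the empty antichain) and the elementary claim about antichains of $\triangleleft$, which is immediate from the definition of the ordinal sum. If one prefers to avoid the vertex description, the same identity follows by a direct argument with the defining inequalities, using that the chains of $\poset$ are precisely the concatenations $C_1 \cup C_2$ of a chain $C_1$ of $\poset_1$ with a chain $C_2$ of $\poset_2$: for $g = (g_1, g_2) \in \chain(\poset)$ one sets $\alpha = \max_{C_1} \sum_{i \in C_1} g_1(i)$ and $\beta = \max_{C_2} \sum_{i \in C_2} g_2(i)$, notes $\alpha + \beta \le 1$ (take the concatenation of the two maximizing chains), and writes $g = \alpha\,(g_1/\alpha, \mathbf{0}) + \beta\,(\mathbf{0}, g_2/\beta) + (1 - \alpha - \beta)\,\mathbf{0}$ (with the obvious modification when $\alpha$ or $\beta$ is $0$), exhibiting $g$ as a convex combination of points of the two summands; the reverse inclusion is the trivial check that $\chain(\poset_1) \times \mathbf{0}_{\poset_2}$ and $\mathbf{0}_{\poset_1} \times \chain(\poset_2)$ both lie in $\chain(\poset)$, since restricting a chain of $\poset$ to either part gives a chain there.
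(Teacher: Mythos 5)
Your proof is correct and takes essentially the same approach as the paper: both invoke Stanley's vertex description of the chain polytope (Theorem~\ref{thm:VerticesChainPoly}) together with the observation that in an ordinal sum no antichain can meet both $\poset_1$ and $\poset_2$. Your write-up is merely more explicit, and the optional H-description argument you sketch is a valid alternative but not needed.
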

	\begin{proof}
		By Theorem \ref{thm:VerticesChainPoly}, the vertices of the chain polytope are given by the indicator vectors of antichains. Now one notices that no antichain can contain elements from both $\poset_1$ and $\poset_2$.
	\end{proof}
	
	\begin{lemma}
		\label{lem:OrdSumOrderDesc}
		Let $\order({\poset}), \order({\poset_1}), \order({\poset_2})$ be the order polytopes of $\poset, {\poset_1}, \poset_2$, respectively. Then  
		\begin{equation}
			\label{eq:ordsumorderpoly}
			\order({\poset})=\operatorname{conv}\{\mathbf 1_{\poset_2} \times \order({\poset_1})\cup\order({\poset_2}) \times \mathbf  0_{\poset_1}  \}.
		\end{equation}
	\end{lemma}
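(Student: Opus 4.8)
The plan is to argue on the level of vertices, exactly as in the proof of Lemma~\ref{lem:OrdSumChainDesc}, using that by Corollary~\ref{cor:VerticesOrderPoly} the vertices of an order polytope are precisely the indicator vectors of its order filters. Throughout I identify $\R^{\poset}$ with $\R^{\poset_2}\times\R^{\poset_1}$, the first block of coordinates indexed by ${\poset_2}$ and the second by ${\poset_1}$, and I write $A:=\mathbf 1_{\poset_2}\times\order({\poset_1})$ and $B:=\order({\poset_2})\times\mathbf 0_{\poset_1}$ for the two polytopes whose convex hull appears on the right-hand side of (\ref{eq:ordsumorderpoly}). The vertices of $A$ are then the points $\mathbf 1_{\poset_2}\times\indicator{F_1}$ with $F_1$ an order filter of ${\poset_1}$, and the vertices of $B$ are the points $\indicator{F_2}\times\mathbf 0_{\poset_1}$ with $F_2$ an order filter of ${\poset_2}$. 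It therefore suffices to prove that the set of indicator vectors of order filters of $\poset$ equals $V(A)\cup V(B)$: for then $\order({\poset})=\conv\{\indicator F\}=\conv(V(A)\cup V(B))=\conv(A\cup B)$, using Corollary~\ref{cor:VerticesOrderPoly} together with $A=\conv V(A)$ and $B=\conv V(B)$.

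First I would classify the order filters of $\poset={\poset_1}\triangleleft{\poset_2}$. If $F$ is a filter with $F\cap{\poset_1}\neq\emptyset$, then picking $p\in F\cap{\poset_1}$ and using $p<_{\poset}q$ for every $q\in{\poset_2}$ forces ${\poset_2}\subseteq F$; moreover $F\cap{\poset_1}$ is a filter of ${\poset_1}$, so $F=(F\cap{\poset_1})\cup{\poset_2}$ and under the coordinate identification $\indicator F=\mathbf 1_{\poset_2}\times\indicator{F\cap{\poset_1}}$, a vertex of $A$. If instead $F\cap{\poset_1}=\emptyset$, then $F\subseteq{\poset_2}$ is a filter of ${\poset_2}$ and $\indicator F=\indicator F\times\mathbf 0_{\poset_1}$ lies in $V(B)$ (the first $\indicator F$ taken in $\R^{\poset_2}$). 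Conversely, for any filter $F_1$ of ${\poset_1}$ the set $F_1\cup{\poset_2}$ is a filter of $\poset$ (a routine check that it is an up-set, since nothing in ${\poset_2}$ lies below anything in ${\poset_1}$) with indicator $\mathbf 1_{\poset_2}\times\indicator{F_1}$, and for any filter $F_2$ of ${\poset_2}$ the set $F_2$ is a filter of $\poset$ because ${\poset_2}$ is an up-set of $\poset$, with indicator in $V(B)$. Thus the set of filter indicators of $\poset$ and $V(A)\cup V(B)$ coincide, which completes the argument.

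I expect no serious obstacle; this is a direct vertex count once the filters of an ordinal sum are understood, and the remaining points are purely bookkeeping: the identification $\R^{\poset}\cong\R^{\poset_2}\times\R^{\poset_1}$; the fact that the empty filter of ${\poset_1}$ corresponds to the filter ${\poset_2}$ of $\poset$, so $A$ and $B$ overlap in the vertex $\mathbf 1_{\poset_2}\times\mathbf 0_{\poset_1}$; and the degenerate cases ${\poset_1}=\emptyset$ or ${\poset_2}=\emptyset$, in which one of $A,B$ collapses to a single point and (\ref{eq:ordsumorderpoly}) reduces to the trivial identity $\order({\poset_2})=\conv(\order({\poset_2})\cup\{\mathbf 1_{\poset_2}\})$ or its mirror image. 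Alternatively, one can give a direct proof: every $f\in\order({\poset})$ satisfies $a:=\max_{p\in{\poset_1}}f(p)\le\min_{p\in{\poset_2}}f(p)=:b$, and for any $\lambda\in[a,b]$ one writes $f$ as the convex combination $\lambda\,(\mathbf 1_{\poset_2}\times\tfrac1\lambda f|_{\poset_1})+(1-\lambda)\,(\tfrac1{1-\lambda}(f|_{\poset_2}-\lambda\mathbf 1_{\poset_2})\times\mathbf 0_{\poset_1})$ of a point of $A$ and a point of $B$, with the obvious conventions when $\lambda\in\{0,1\}$ or one of the posets is empty.
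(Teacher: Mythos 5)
Your proof is correct and takes essentially the same approach as the paper: both classify the order filters of $\poset_1 \triangleleft \poset_2$ (a filter meeting $\poset_1$ must contain all of $\poset_2$) and invoke Corollary~\ref{cor:VerticesOrderPoly}. The paper's version is just the one-line form of your argument; your explicit convex-combination alternative at the end is also valid but not needed.
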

	\begin{proof}
		By Corollary \ref{cor:VerticesOrderPoly}, the vertices of the order polytope are given by the indicator vectors of filters. Now one notices that as soon as a filter contains an element of $\poset_1$, it contains \emph{all} elements of $\poset_2$.
	\end{proof}
	
	Moreover, we have:
	\begin{proposition}
		\label{prop:HStarOrdinalSum}
		Let $\poset, {\poset_1}, \poset_2$, be finite posets where $\poset := {\poset_1} \triangleleft \poset_2$. Moreover, let $h^{\ast}_{\poset}, h^{\ast}_{\poset_1}, h^{\ast}_{\poset_2}$ be the $h^{\ast}$-polynomial of the Ehrhart series of the corresponding order polytopes. Then
		\begin{equation*}
			\label{eq:hstarordsum}
			h^{\ast}_{\poset} =  h^{\ast}_{\poset_1}  h^{\ast}_{\poset_2}.
		\end{equation*} 
	\end{proposition}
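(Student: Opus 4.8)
The plan is to deduce the identity from the corresponding factorization of Ehrhart series, namely
\[
\Ehr_{\order(\poset)}(z) = (1-z)\,\Ehr_{\order(\poset_1)}(z)\,\Ehr_{\order(\poset_2)}(z).
\]
Granting this, write $d_i=\#\poset_i$, so that $\#\poset=d_1+d_2$ and all three order polytopes are full-dimensional; substituting $\Ehr_{\order(\poset_i)}(z)=h^{\ast}_{\poset_i}(z)/(1-z)^{d_i+1}$ and $\Ehr_{\order(\poset)}(z)=h^{\ast}_{\poset}(z)/(1-z)^{d_1+d_2+1}$ into the displayed identity and cancelling the common denominator $(1-z)^{d_1+d_2+1}$ gives $h^{\ast}_{\poset}=h^{\ast}_{\poset_1}h^{\ast}_{\poset_2}$ at once.

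To prove the series identity I would count lattice points in dilates. Directly from Definition~\ref{def:orderpolytope} and the definition of the ordinal sum, a lattice point of $t\order(\poset)$ is the same thing as a pair $(f_1,f_2)$ of order-preserving maps $f_i\colon\poset_i\to\{0,1,\dots,t\}$ with $\max f_1\le\min f_2$, since the only relations of $\poset$ across the two blocks are $p<q$ for every $p\in\poset_1$, $q\in\poset_2$. Assume first $\poset_2\ne\emptyset$ and stratify by $c:=\min f_2\in\{0,\dots,t\}$. The number of admissible $f_1$ is the number of order-preserving maps $\poset_1\to\{0,\dots,c\}$, which equals $\ehr_{\order(\poset_1)}(c)$ (Theorem~\ref{thm:EhrhartPolyofOrder}); the number of $f_2$ with $\min f_2=c$ exactly is $\ehr_{\order(\poset_2)}(t-c)-\ehr_{\order(\poset_2)}(t-c-1)$ (maps into $\{c,\dots,t\}$ minus maps into $\{c+1,\dots,t\}$), with the convention $\ehr_{\order(\poset_2)}(-1)=0$. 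Hence
\[
\ehr_{\order(\poset)}(t)=\sum_{c=0}^{t}\ehr_{\order(\poset_1)}(c)\bigl(\ehr_{\order(\poset_2)}(t-c)-\ehr_{\order(\poset_2)}(t-c-1)\bigr).
\]
This is a Cauchy product, and since $\sum_{m\ge0}\bigl(\ehr_{\order(\poset_2)}(m)-\ehr_{\order(\poset_2)}(m-1)\bigr)z^m=(1-z)\Ehr_{\order(\poset_2)}(z)$ (using $\ehr_{\order(\poset_2)}(-1)=0$), passing to generating functions yields exactly the desired factorization. The degenerate cases are immediate: if $\poset_2=\emptyset$ then $\poset=\poset_1$ and one instead stratifies by $\max f_1$ (or simply notes $\Ehr_{\order(\emptyset)}(z)=1/(1-z)$, which makes the formula an identity), and similarly for $\poset_1=\emptyset$.

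I do not expect a genuine obstacle here; the only thing to get right is the bookkeeping of the several off-by-one shifts --- identifying lattice points of $t\order(\poset_i)$ with order-preserving maps into the $(t+1)$-chain, the ``minimum exactly $c$'' inclusion--exclusion, and the single extra factor $(1-z)$ reflecting that $\dim\order(\poset)=d_1+d_2$ rather than $d_1+d_2+1$. An alternative, equivalent route phrases everything through the order polynomial $\Omega(\poset,\cdot)$ and the bijection $\sum_{k=1}^{n}\Omega(\Qc,k)=\Omega(\Qc\triangleleft\{\ast\},n)$ (extend an order-preserving map by sending a new top element to the chosen bound), but the lattice-point stratification above seems cleanest.
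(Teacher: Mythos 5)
Your proof is correct, but it takes a genuinely different route from the paper. The paper first switches from the order polytope to the chain polytope (they have the same Ehrhart series by Theorem~\ref{thm:EhrhartPolyofOrder}), then invokes Lemma~\ref{lem:OrdSumChainDesc}, which exhibits $\chain(\poset_1\triangleleft\poset_2)$ as the free sum $\chain(\poset_1)\oplus\chain(\poset_2)$, and finally applies a result of Hibi and Higashitani asserting that $h^{\ast}_{P\oplus Q}=h^{\ast}_{P}h^{\ast}_{Q}$ whenever the free sum has the integer-decomposition property (which chain polytopes do, since they have unimodular triangulations). Your argument instead stays entirely on the order-polytope side and establishes the Ehrhart-series factorization $\Ehr_{\order(\poset)}=(1-z)\Ehr_{\order(\poset_1)}\Ehr_{\order(\poset_2)}$ directly by stratifying lattice points in $t\order(\poset)$ by the value $c=\min f|_{\poset_2}$ and recognizing the resulting sum as a Cauchy product. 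Your version is more self-contained and elementary --- it needs neither the transfer to chain polytopes, the free-sum structure, nor the IDP hypothesis --- whereas the paper's proof buys conceptual clarity by placing the result inside the general free-sum framework, which connects it to existing machinery and makes the IDP hypothesis (and the role of unimodular triangulations) explicit. The off-by-one bookkeeping and the degenerate cases $\poset_i=\emptyset$ are handled correctly in your write-up; the intermediate convolution identity you derive is also a clean restatement of the classical fact that the order polynomial of an ordinal sum is a certain finite-difference convolution of the factors' order polynomials.
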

	\begin{proof}
		By Theorem~\ref{thm:EhrhartPolyofOrder} the Ehrhart series of the chain polytope of a poset $\poset$ is the same as the Ehrhart series of the order polytope of $\poset$.In~\cite[Lem 3.2]{HibiHigashitani}, Hibi and Higashitani show that if the free sum, $P \oplus Q$, of two lattice polytopes $P$, $Q$ both containing the origin has the integer-decomposition property, then  $h^{\ast}_{P \oplus Q} = h^{\ast}_P h^{\ast}_Q$.
Now using Lemma~\ref{lem:OrdSumChainDesc} together with~\cite[Lem 3.2]{HibiHigashitani} implies the result. Note that every chain polytope and every order polytope has a unimodular triangulation and thus has the integer-decomposition property. For the order polytope, we directly get a regular, unimodular, flag triangulation by taking the standard triangulation of the cube and restricting it to the order polytope. For the chain polytope,  Stanley~\cite{StanleyOrderPoly} constructs such a regular, unimodular, flag triangulation.
	\end{proof}
	
	\begin{remark}
		\label{rem:HibiExampleSameHStar}
		In \cite{HibiStraightening}, Takayuki Hibi gives an example of two order polytopes $\order(\poset_1)$, $\order(\poset_2)$ where both have the same $h^{\ast}$-polynomial, but $\poset_1$ is level and $\poset_2$ is not level. This shows that the level property cannot be characterized by the $h^{\ast}$-polynomial. We remark that Theorem~\ref{thm:OrdSumLevel} together with Proposition \ref{prop:HStarOrdinalSum} gives a way to create infinitely many such examples $\poset_1 \triangleleft \poset_3$ and $\poset_2 \triangleleft \poset_3$, where $\poset_3$ is any level poset.
	\end{remark}

	\section{Connected components of level posets}
	\label{sec:ConnectedComponents}\rem{
		\christiancomment{$P, Q$ IDP + level then $P \times Q$ IDP +
			level. Florian will deliver example that both are necessary}
		
		\christiancomment{given $P$, what can we say about the set of $k$
			s.t. $kP$ level?}
		
		\flocomment{since we use $r$ for codegree in the other sections, I will change $l$ to $r$ here. $r \mapsto s$,$l \mapsto r$ }}
	In this section, we discuss connected components of level posets.
	Any connected component of a Gorenstein poset is Gorenstein.
	This fact naturally leads us consider whether any connected component of a level poset is level.
	However, this is not true in general.
	From the following result we know that there exists a level poset such that a connected component of the poset is not level.
	
	\begin{theorem}[{\cite[Theorem 4.7]{EneHerzogHibi}}]
		\label{thm:chain}
		Let $\poset$ be a poset on $d$ elements and let $C_{s}$ be a totally ordered set with $s$ elements. Then the poset on the set $\poset \cup C_{s}$ ,where elements from $\poset$ and $C_{s}$ are incomparable, is level for all $s \gg 0$.
	\end{theorem}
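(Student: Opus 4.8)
The plan is to realize $\order(\poset\sqcup C_s)$ as a Cartesian product and then invoke Theorem~\ref{thm:prod}. Since no element of $\poset$ is comparable to any element of $C_s$, a map $f\colon\poset\cup C_s\to\R$ is order preserving if and only if both its restriction to $\poset$ and its restriction to $C_s$ are order preserving; identifying $\R^{\poset\cup C_s}$ with $\R^{\poset}\times\R^{C_s}$, this gives $\order(\poset\sqcup C_s)=\order(\poset)\times\order(C_s)$. Both factors are alcoved polytopes, since an order polytope is defined by inequalities of the shapes $0\le x_i\le 1$ and $x_i-x_j\le 0$, and $\order(\poset)$ is $d$-dimensional.

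To apply Theorem~\ref{thm:prod} with $P=\order(\poset)$ and $Q=\order(C_s)$, I would check its two hypotheses on $Q$. First, $\order(C_s)=\{0\le x_1\le\cdots\le x_s\le 1\}$ is a simplex whose $s+1$ vertices are the indicator vectors of the filters of $C_s$; the unimodular substitution $y_i=x_i-x_{i-1}$ (with $x_0:=0$) carries it onto the standard simplex, so its $h^{\ast}$-polynomial equals $1$. As $1$ is palindromic, $\order(C_s)$ is Gorenstein, hence level. Second, by the identification of the codegree of an order polytope with the rank of $\posetTopBottom$ recalled above, $\codeg(\order(C_s))$ equals the rank of the chain $-\infty<1<\cdots<s<\infty$, namely $s+1$.

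It then follows that for every $s\ge d$ we have $\codeg(Q)=s+1\ge d+1=\dim P+1$, so Theorem~\ref{thm:prod} shows that $P\times Q=\order(\poset\sqcup C_s)$ is level; equivalently, $\poset\sqcup C_s$ is level for all $s\ge d$, and in particular for all $s\gg 0$. I do not anticipate a genuine obstacle here: the substance is already contained in Theorem~\ref{thm:prod}, and the only steps needing a little care are the product decomposition of $\order(\poset\sqcup C_s)$ and the computation $\codeg(\order(C_s))=s+1$. Alternatively, one could argue directly from Corollary~\ref{cor:GeneralGamma'}, using that once $s$ is large enough every longest chain of $\poset\sqcup C_s$ (with a top and a bottom element adjoined) runs through the $C_s$-summand, and that appending those down-edges of weight $1$ to a digraph $\Gamma(\poset')$ without a negative cycle cannot create one; but the product route is shorter and reuses machinery already set up here.
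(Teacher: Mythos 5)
Your proof is correct and takes essentially the same route the paper does: Theorem~\ref{thm:chain} is cited from \cite{EneHerzogHibi} without proof, but the paper's own Theorem~\ref{thm:chainbound} strengthens it to the explicit bound $s\geq d$, and its proof is precisely your argument — decompose $\order(\poset\sqcup C_s)=\order(\poset)\times\order(C_s)$, note that $\order(C_s)$ is the $s$-dimensional unimodular simplex with codegree $s+1\ge\dim\order(\poset)+1$, and apply Theorem~\ref{thm:prod}.
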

	
	We give an explicit bound for $s$ appearing in Theorem~\ref{thm:chain}.
	\begin{theorem}
		\label{thm:chainbound}
		Let $\poset$ be a poset on $d$ elements and let $C_{s}$ be a totally ordered set with $s$ elements. Then the poset on the set $\poset \cup C_{s}$, where elements from $\poset$ and $C_{s}$ are incomparable, is level for all $s\geq d$.
	\end{theorem}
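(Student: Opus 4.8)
The plan is to recognise $\order(\poset\cup C_s)$ as a Cartesian product to which Theorem~\ref{thm:prod} applies directly. Write $\Pi$ for the poset on $\poset\cup C_s$ in which every element of $\poset$ is incomparable to every element of $C_s$. Because there are no relations between the two parts, an order-preserving map on $\Pi$ is exactly a pair consisting of an order-preserving map on $\poset$ and one on $C_s$; equivalently, under the identification $\R^{\Pi}=\R^{\poset}\times\R^{C_s}$ we have
\[
\order(\Pi)=\order(\poset)\times\order(C_s).
\]
So it is enough to prove that this product is level.

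Next I would record the data of the two factors. The polytope $\order(\poset)$ is an alcoved polytope of dimension $d$. The polytope $\order(C_s)$ is the order simplex $\{\,0\le x_1\le x_2\le\cdots\le x_s\le 1\,\}$, a unimodular $s$-simplex; by Theorem~\ref{thm:EhrhartPolyofOrder} its Ehrhart series is $1/(1-z)^{s+1}$, so its $h^{\ast}$-polynomial equals $1$. Hence $\order(C_s)$ is Gorenstein, in particular level, and $\codeg(\order(C_s))=s+1$.

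Now the hypothesis $s\ge d$ yields
\[
\codeg\bigl(\order(C_s)\bigr)=s+1\ \ge\ d+1=\dim\order(\poset)+1 .
\]
Therefore Theorem~\ref{thm:prod}, applied with $P=\order(\poset)$ and $Q=\order(C_s)$, shows that $\order(\poset)\times\order(C_s)=\order(\Pi)$ is level, which is the assertion. All of the substance is carried by Theorem~\ref{thm:prod}; nothing further is required.

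It may be worth noting an alternative argument that stays inside the order-polytope world and uses Miyazaki's criterion (Lemma~\ref{lem:Miyazaki-zigzag}): one shows $r_{\max}(\Pi)=s+1=\codeg(\order(\Pi))$. Here one uses that consecutive terms of a sequence with condition $N$ are comparable, so such a sequence lies entirely in $\poset$ or entirely in $C_s$; that a condition-$N$ sequence inside the chain $C_s$ is forced into the shape $i_t<j_t<\cdots<i_1<j_1$ and then has $r$-value exactly $s+1$ by a short telescoping; and that for a condition-$N$ sequence inside $\poset$ the element $\y(\path)$ of Lemma~\ref{lem:miya} is a minimal generator of $\canonical{\k[\order(\poset)]}$, whose height is at most $d+1$ because, read as a strictly order-preserving map $\poset\to\{1,\dots,h-1\}$ with $h$ its height, it must use every value in $\{1,\dots,h-1\}$ (a skipped value could be \emph{squeezed out}, i.e., decrementing all coordinates above it by one exhibits a proper factorization, contradicting minimality). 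Combining these with $d+1\le s+1$ gives $r_{\max}(\Pi)=s+1$. In either approach the one delicate point is this bound on the height of a minimal generator of the canonical module, and the first approach simply absorbs it into Theorem~\ref{thm:prod}.
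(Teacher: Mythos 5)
Your main argument is exactly the paper's proof: identify $\order(\poset\cup C_s)=\order(\poset)\times\order(C_s)$, note that $\order(C_s)$ is the unimodular $s$-simplex (Gorenstein, hence level, with codegree $s+1$), and apply Theorem~\ref{thm:prod} using $s+1\geq d+1=\dim\order(\poset)+1$. The sketched alternative via Miyazaki's criterion is a nice bonus but not needed; the primary route is the same as the paper's.
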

	
	In order to prove this theorem, we consider a more general class of lattice polytopes containing any order polytope.
	\begin{definition}
		We say that a polytope  $\polytope \subset \R^d$ is {\em alcoved} if $\polytope$ is  an intersection of some half-spaces bounded by the hyperplanes 
		$$
		H_{i,j}^{m}=\{(z_1,\ldots,z_d) \in \R^d : z_i-z_j=m\} \textnormal{\ for\ } 0 \leq i < j \leq d, m \in \Z,
		$$ 
		where $z_0=0$.  
	\end{definition}
	It is known that any order polytope is alcoved. After a unimodular change of coordinates, every chain polytope is alcoved, too. Furthermore, any alcoved polytope possesses the integer-decomposition property.

	For a lattice polytope $\polytope =\{\x \in \R^d : A\x \leq \b \} \subset \R^d$,
	we set
	$\polytope^{(1)}= \{\x \in \R^d: A\x \leq \b-1\}$. 
	\begin{remark}
		If $\polytope =\{\x \in \R^d : A\x \leq \b \} \subset \R^d$ is an alcoved polytope for some $m \times d$ integer matrix $A$ and some integer vector $\b \in \Z^m$, then since $A$ is a totally unimodular matrix,
		$\Pc^{(1)}$ is a lattice polytope.
		In particular, one has 
		$\Pc^{(1)}=\conv(\int(\polytope) \cap \Z^d)$.
	\end{remark}
	
	\begin{lemma}
		\label{alcoved:lemma}
		Let $\polytope \subset \R^d$ be an alcoved polytope.
		Then for any positive integer $k$, $k\polytope$ and $\polytope^{(1)}$ are alcoved.
	\end{lemma}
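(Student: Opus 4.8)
The plan is to argue directly from an alcoved presentation of $\polytope$ and observe that both dilation and the $(1)$-operation act on the (integer) right-hand sides in a way that keeps them integral, so that each output is again cut out by inequalities of the required form. By definition of ``alcoved'' we may fix a presentation
\[
\polytope = \{\x \in \R^d : x_i - x_j \leq m_{ij} \text{ for } (i,j) \in S\},
\]
where $S$ is a finite set of ordered pairs of distinct indices in $\{0,1,\dots,d\}$, each $m_{ij}\in\Z$, and $x_0:=0$; indeed, every half-space bounded by some $H_{i,j}^{m}$ has the form $\{x_i-x_j\leq m\}$ or $\{x_i-x_j\geq m\}=\{x_j-x_i\leq -m\}$, so after possibly swapping $i$ and $j$ every facet inequality can be written in the displayed shape with integral right-hand side.

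For the dilate I would use $\y\in k\polytope \iff \y/k\in\polytope$. Since the phantom coordinate satisfies $0=k\cdot 0$, applying the defining inequalities of $\polytope$ to $\y/k$ yields $y_i-y_j\leq k m_{ij}$ for all $(i,j)\in S$, again with $y_0:=0$. As $k m_{ij}\in\Z$, this exhibits an alcoved presentation of $k\polytope$, so $k\polytope$ is alcoved.

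For $\polytope^{(1)}$ I would note that in the presentation above the constraint matrix $A$ has rows of the form $e_i-e_j$ (with $e_0$ the zero vector of $\R^d$), so $\polytope^{(1)}=\{\x : A\x\leq\b-1\}$ is exactly $\{\x\in\R^d : x_i-x_j\leq m_{ij}-1 \text{ for }(i,j)\in S\}$, once more with $x_0:=0$; since $m_{ij}-1\in\Z$ this is an alcoved presentation. (It is worth a sentence that this is independent of the chosen alcoved presentation of $\polytope$, since the rows $e_i-e_j$ form a totally unimodular system and the remark preceding the lemma identifies $\polytope^{(1)}$ with $\conv(\int(\polytope)\cap\Z^d)$.) I do not anticipate a genuine obstacle: the only points requiring care are that the ``alcoved'' hypothesis really forces each facet inequality into the integral form $x_i-x_j\leq m$ after possibly interchanging $i$ and $j$, and that the convention $x_0=0$ is preserved under dilation because $0=k\cdot 0$.
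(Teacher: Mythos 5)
Your proof is correct and follows essentially the same approach as the paper: fix an alcoved presentation with integer right-hand sides, observe that dilation by $k$ multiplies these by $k$ and the $(1)$-operation shifts them by $1$, and conclude that both outputs are again cut out by integral inequalities of the form $x_i - x_j \leq m$. The only cosmetic difference is that the paper writes the constraints as two-sided bounds $b_{ij} \leq z_i - z_j \leq c_{ij}$, while you normalize them to one-sided form after swapping indices where needed.
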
	
	\begin{proof}
		Since $\polytope$ is alcoved, $\polytope$ is a polytope given by inequalities of the form $b_{ij} \leq z_i-z_j \leq c_{ij}$, for some collection of integer parameters $b_{ij}$ and $c_{ij}$.
		Hence for any positive integer $k$, $k\polytope$ is a polytope given by inequalities of the form $kb_{ij} \leq z_i-z_j \leq kc_{ij}$.
		Moreover, $\polytope^{(1)}$ is a polytope given by inequalities of the form $b_{ij}+1 \leq z_i-z_j \leq c_{ij}-1$.
		Therefore, both $k\polytope$ and $\polytope^{(1)}$ are alcoved.
	\end{proof}
	For two lattice polytopes $\polytope$ and $Q$ in $\R^d$,
	set 
	$$\Ca(\polytope,Q) = \conv (\polytope \times \{0\} \cup Q \times \{1\})\subset \R^{d+1}.$$
	We say that $\Ca(\polytope,Q)$ is the \textit{Cayley polytope} of $\polytope$ and $Q$.
	\begin{lemma}
		Let $\polytope$ and $Q$ be alcoved polytopes in $\R^d$. Then $\Ca(\polytope,Q)$ has a regular unimodular triangulation. In particular, $\Ca(\polytope,Q)$ has the integer-decomposition property.
	\end{lemma}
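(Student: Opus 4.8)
The plan is to pass from $\Ca(\polytope,Q)$ to a mixed subdivision of the Minkowski sum $\polytope+Q$ via the Cayley trick. Recall that alcoved polytopes are closed under Minkowski sum (the defining parameters $c_{ij}$ in the description $z_i-z_j\le c_{ij}$ simply add), so $\polytope+Q$ is again alcoved, and — like every alcoved polytope — it carries the regular unimodular \emph{alcove triangulation} $\mathcal T$, namely the restriction to the polytope of the affine Coxeter arrangement of type $\widetilde A_d$ (the hyperplanes $z_i-z_j=m$, $z_0=0$); the same $\mathcal T$ restricts to regular unimodular triangulations of $\polytope$ and of $Q$. By the Cayley trick, regular triangulations of $\Ca(\polytope,Q)$ correspond to regular fine mixed subdivisions of $\polytope+Q$, and such a triangulation is unimodular if and only if every maximal cell of the mixed subdivision is a \emph{unimodular mixed cell}, i.e.\ has the form $\sigma+\tau$ with $\sigma$ a face of $\mathcal T|_{\polytope}$, $\tau$ a face of $\mathcal T|_{Q}$, $\dim\sigma+\dim\tau=d$, the affine hulls of $\sigma$ and $\tau$ spanning complementary directions, and the two direction lattices $L_\sigma,L_\tau$ satisfying $L_\sigma\oplus L_\tau=\Z^d$. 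So it suffices to produce one such mixed subdivision: generically perturbing the (regular) alcove triangulations of $\polytope$ and $Q$ against one another gives a regular fine mixed subdivision, and the content of the proof is to check that its mixed cells are unimodular.

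This last point is the main obstacle, and it is precisely where the alcoved hypothesis is used; note that one cannot instead simply realize $\Ca(\polytope,Q)$ itself as an alcoved polytope, since this already fails for $\polytope=[0,1]$, $Q=[0,3]$ in $\R^{1}$. The key observation is that the direction lattice of a face $\sigma$ of the alcove triangulation is always a flat of the braid arrangement: locally at a lattice point the $\widetilde A_d$ arrangement is the type $A_d$ braid arrangement on the coordinates $z_0=0,z_1,\dots,z_d$, so $L_\sigma=\Z^{\mathcal S}:=\{z\in\Z^d : z_i=z_j\text{ whenever }i,j\text{ lie in a common block of }\mathcal S,\ z_i=0\text{ for }i\text{ in the block of }0\}$ for some set partition $\mathcal S$ of $\{0,1,\dots,d\}$. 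For a mixed cell we then have $L_\sigma=\Z^{\mathcal S}$ and $L_\tau=\Z^{\mathcal T}$, and the conditions $\dim\sigma+\dim\tau=d$ and $L_\sigma\cap L_\tau=0$ force $\mathcal S\wedge\mathcal T$ to be the discrete partition and $\mathcal S\vee\mathcal T$ the indiscrete one. Equivalently, the bipartite graph whose nodes are the blocks of $\mathcal S$ and the blocks of $\mathcal T$, with an edge joining $B$ and $C$ whenever $B\cap C\neq\emptyset$, is a tree, and its $d+1$ edges are naturally labelled by the elements of $\{0,1,\dots,d\}$ (the element $B\cap C$). Given any $w\in\Z^d$, viewed as an integer weight on the edges of this tree with the edge labelled $0$ receiving weight $0$, writing $w=z+z'$ with $z\in\Z^{\mathcal S}$ and $z'\in\Z^{\mathcal T}$ amounts to finding an integral ``potential'' assignment on the nodes whose sum along each edge matches the weight; on a tree such an assignment exists and is unique after normalising the node containing $0$ on the $\mathcal S$-side to $0$ (consistency with $w_0=0$ then forces the $\mathcal T$-side node of $0$ to be $0$ as well). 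Hence $\Z^{\mathcal S}\oplus\Z^{\mathcal T}=\Z^d$, which is exactly the required unimodularity of the mixed cell.

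Finally, regularity comes for free: the alcove triangulations of $\polytope$ and $Q$ are regular, so a generic common perturbation of the corresponding height functions induces a regular triangulation of $\Ca(\polytope,Q)$, which by the previous paragraph is unimodular. The existence of a (regular) unimodular triangulation immediately yields that $\Ca(\polytope,Q)$ has the integer-decomposition property, as the in-particular clause asserts.
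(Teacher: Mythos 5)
Your proposal is correct, but it takes a genuinely different route from the paper, whose entire proof consists of the citation ``This is \cite[Lemma 4.15]{HPPS}, since alcoved polytopes have a type A root system.'' You instead supply a self-contained argument: pass from the Cayley polytope to mixed subdivisions of $\polytope+Q$ via the Cayley trick, observe that a generic common lift of the two alcove height functions yields a regular fine mixed subdivision whose cells are sums $\sigma+\tau$ of alcove faces, reduce unimodularity of the corresponding simplex $\Ca(\sigma,\tau)$ to $L_\sigma\oplus L_\tau=\Z^d$ (this reduction is the standard determinant computation and you state it correctly), identify the direction lattices of alcove faces with braid-arrangement flats $\Z^{\mathcal S}$ indexed by set partitions of $\{0,\dots,d\}$, and then settle the lattice-splitting by the bipartite tree/potential argument; the dimension count plus $L_\sigma\cap L_\tau=0$ gives $\mathcal S\wedge\mathcal T=\hat 0$ and $\mathcal S\vee\mathcal T=\hat 1$, so the block-incidence graph is a tree on $d+2$ nodes with $d+1$ edges, and on a tree the integral potential is determined uniquely after the normalisation at the block of $0$, with the equation for $k=0$ automatically consistent. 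Your aside that $\Ca(\polytope,Q)$ need not itself be alcoved (with the example $[0,1]$, $[0,3]$) is a useful sanity check that the detour through mixed subdivisions is really needed. What the paper's approach buys is brevity — one line and a reference — and access to the greater generality of \cite[Lemma 4.15]{HPPS}, which is stated for Cayley sums of polytopes cut out by a common dicing rather than only the type-$A$ case; what your approach buys is a readable, self-contained proof in the special case actually used, and it makes visible exactly where the type-$A$ structure enters (the flats are partition flats, so the mixed-cell incidence pattern is a tree). The two arguments are close in spirit — the HPPS proof also hinges on the lattice structure of the dicing flats — so this is best viewed as an unpacking of the cited lemma for the alcoved case rather than an essentially new method.
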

	\begin{proof}
		This is~\cite[Lemma 4.15]{HPPS}, since alcoved polytopes have a type A root system.
	\end{proof}
	This directly implies the following result.
	\begin{corollary}
		\label{cor:Alcoved_onto}
		If $\polytope,Q \subset \R^d$ are alcoved polytopes, then the map
		\[
		\left(P \cap \Z^d\right) \times \left(Q \cap \Z^d \right) \twoheadrightarrow (P + Q) \cap \Z^d.
		\]
		is onto.
	\end{corollary}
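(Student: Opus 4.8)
The plan is to deduce this from the Cayley trick together with the previous lemma. Write points of $\R^{d+1}$ as pairs $(\x, h)$, and call the last coordinate $h$ the \emph{height}. For $t \in [0,1]$ the height-$t$ slice of $\Ca(\polytope, Q)$ is $(1-t)\polytope + tQ$, so scaling by $2$ and taking $t = 1/2$ shows that the height-$1$ slice of $2\,\Ca(\polytope,Q)$ is exactly the Minkowski sum $\polytope + Q$. In particular, $\x \in (\polytope + Q)\cap \Z^d$ if and only if $(\x,1) \in 2\,\Ca(\polytope,Q)\cap\Z^{d+1}$.

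Next I would record the lattice points of $\Ca(\polytope,Q)$ itself. Since $\Ca(\polytope,Q) \subseteq \R^d \times [0,1]$, every lattice point of $\Ca(\polytope,Q)$ has height $0$ or $1$; the height-$0$ lattice points are exactly the $(\u,0)$ with $\u \in \polytope \cap \Z^d$, and the height-$1$ lattice points are exactly the $(\v,1)$ with $\v \in Q \cap \Z^d$.

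Now fix $\x \in (\polytope + Q)\cap\Z^d$, so that $(\x,1) \in 2\,\Ca(\polytope,Q)\cap\Z^{d+1}$ by the slice computation. By the previous lemma, $\Ca(\polytope,Q)$ has the integer-decomposition property, so $(\x,1) = (\u,a) + (\v,b)$ for some lattice points $(\u,a),(\v,b) \in \Ca(\polytope,Q)\cap\Z^{d+1}$. Their heights satisfy $a,b \in \{0,1\}$ and $a+b = 1$, hence, after possibly swapping the two summands, $a = 0$ and $b = 1$. By the description of the lattice points above, this gives $\u \in \polytope\cap\Z^d$ and $\v \in Q\cap\Z^d$ with $\x = \u + \v$, i.e.\ $\x$ is the image of $(\u,\v)$ under the addition map. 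This proves surjectivity.

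I do not expect a real obstacle here: the only genuine input is the preceding lemma, and everything else is the Cayley-slice identity and elementary bookkeeping with heights. The one point to state carefully is why a height-$1$ lattice point of $2\,\Ca(\polytope,Q)$ must split as a height-$0$ point plus a height-$1$ point, which is immediate from $\Ca(\polytope,Q)$ lying between the hyperplanes of height $0$ and $1$.
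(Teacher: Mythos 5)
Your proof is correct and follows the paper's argument exactly: identify $(\polytope+Q)\times\{1\}$ as the height-$1$ slice of $2\,\Ca(\polytope,Q)$, invoke the IDP of $\Ca(\polytope,Q)$ from the preceding lemma, and observe that the two summands must sit at heights $0$ and $1$, giving points of $\polytope\cap\Z^d$ and $Q\cap\Z^d$ respectively. The only difference is that you spell out the bookkeeping in a bit more detail than the paper does.
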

	
	\begin{proof}
		We have that 
		\[
		2 \Ca(\polytope,Q) \cap \{(x_1,\ldots,x_{d+1}) \in \R^{d+1} : x_{d+1}=1\} =( \polytope+Q) \times \{1\}.
		\]
		Since $\Ca(\polytope,Q)$ has the integer-decomposition property, it follows  that every integer point in $2\Ca(\polytope,Q)$ can be written as a sum of two integer points in $\Ca(\polytope,Q)$. However, the only way we can get an integer point at height $1$ is if we add one integer point at height $0$ and at height $1$, i.e., one integer point belongs to $P$ and one belongs to $Q$, proving the claim.
	\end{proof}
	Now, we give a characterization on levelness of alcoved polytopes.
	
	\begin{proposition}
		\label{al_level}
		Let $\polytope \subset \R^d$ be an alcoved polytope and let $r=\textnormal{codeg}(\polytope)$.
		Then $\polytope$ is level if and only if
		for any integer $k \geq r$,
		it follows that  $(k\polytope)^{(1)}=(r\polytope)^{(1)}+(k-r)\polytope$. 
	\end{proposition}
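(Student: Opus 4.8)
The plan is to turn both sides of the claimed identity into statements about lattice points and match them with the decomposition form of levelness. Fix a presentation $\polytope = \{\x \in \R^d : A\x \leq \b\}$ with $A$ totally unimodular of alcoved type. Then $l\polytope = \{\x : A\x \leq l\b\}$, and by the remark preceding Lemma~\ref{alcoved:lemma} we have $(l\polytope)^{(1)} = \{\x : A\x \leq l\b - 1\} = \conv(\int(l\polytope) \cap \Z^d)$ for every positive integer $l$; in particular $(l\polytope)^{(1)} \cap \Z^d = \int(l\polytope) \cap \Z^d$. Recalling from Remark~\ref{rem:CodegreeInterpretation} that $r$ is the least positive integer with $\int(r\polytope) \cap \Z^d \neq \emptyset$, and using that alcoved polytopes have the integer-decomposition property, Remark~\ref{rem:LevelExplanation} becomes the following: $\polytope$ is level if and only if for every integer $k \geq r$ and every $\x \in \int(k\polytope) \cap \Z^d$ there are $\y \in \int(r\polytope) \cap \Z^d$ and $\z \in (k-r)\polytope \cap \Z^d$ with $\x = \y + \z$.

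Before splitting into cases I would record the inclusion $(r\polytope)^{(1)} + (k-r)\polytope \subseteq (k\polytope)^{(1)}$, which holds for every $k \geq r$ with no hypothesis on $\polytope$: from $A\y \leq r\b - 1$ and $A\z \leq (k-r)\b$ one gets $A(\y + \z) \leq k\b - 1$. So the proposition is really about the decomposition property versus the reverse inclusion.

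For the implication ($\Leftarrow$), assume the identity and take $\x \in \int(k\polytope) \cap \Z^d$ with $k \geq r$; then $\x$ is a lattice point of $(k\polytope)^{(1)}$, which by hypothesis equals $(r\polytope)^{(1)} + (k-r)\polytope$. Since $(r\polytope)^{(1)}$ and $(k-r)\polytope$ are alcoved by Lemma~\ref{alcoved:lemma}, Corollary~\ref{cor:Alcoved_onto} yields $\x = \y + \z$ with $\y \in (r\polytope)^{(1)} \cap \Z^d = \int(r\polytope) \cap \Z^d$ and $\z \in (k-r)\polytope \cap \Z^d$, which is exactly the levelness criterion above. For the implication ($\Rightarrow$), assume $\polytope$ is level and fix $k \geq r$: each $\x \in \int(k\polytope) \cap \Z^d$ decomposes as $\y + \z$ with $\y \in \int(r\polytope) \cap \Z^d \subseteq (r\polytope)^{(1)}$ and $\z \in (k-r)\polytope \cap \Z^d \subseteq (k-r)\polytope$, so $\int(k\polytope) \cap \Z^d \subseteq (r\polytope)^{(1)} + (k-r)\polytope$; taking convex hulls, and using that the right-hand side is convex, gives $(k\polytope)^{(1)} = \conv(\int(k\polytope) \cap \Z^d) \subseteq (r\polytope)^{(1)} + (k-r)\polytope$, which together with the earlier inclusion is equality.

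The load-bearing ingredient (more a cited input than a genuine obstacle, since the rest is bookkeeping) is Corollary~\ref{cor:Alcoved_onto}: a lattice point of a Minkowski sum of general polytopes need not be a sum of lattice points of the summands, and it is the integer-decomposition property of Cayley polytopes of alcoved polytopes that makes the implication ($\Leftarrow$) go through. I would also be careful to invoke the remark preceding Lemma~\ref{alcoved:lemma}, where total unimodularity is exactly what forces $(l\polytope)^{(1)} = \conv(\int(l\polytope) \cap \Z^d)$, and to note that $k = r$ is trivial, since $(k - r)\polytope = \{\mathbf{0}\}$ and both sides reduce to $(r\polytope)^{(1)}$.
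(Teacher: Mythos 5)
Your proof is correct and takes essentially the same route as the paper: the forward direction reduces to taking convex hulls of the lattice-point decomposition furnished by levelness, and the reverse direction rests on Lemma~\ref{alcoved:lemma} together with Corollary~\ref{cor:Alcoved_onto} to split a lattice point of the Minkowski sum into lattice points of the alcoved summands. The only cosmetic difference is that you establish the two inclusions separately in the forward direction, whereas the paper invokes $\conv(A+B)=\conv(A)+\conv(B)$ in one step; the ingredients and the structure are the same.
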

	
	\begin{proof}
		First, assume that $\polytope$ is level.
		Then from the definition of levelness, 
		for any integer $k \geq r $, $\int(k\polytope) \cap \Z^d=\int(r \polytope) \cap \Z^d+(k-r)\polytope \cap \Z^d$.
		Hence one has $(k\polytope)^{(1)} =\conv((\int(r\polytope) \cap \Z^d)+(k-r)\polytope \cap \Z^d)=\conv(\int(r\polytope) \cap \Z^d)+\conv((k-r)\polytope \cap \Z^d)= (r\polytope)^{(1)}+(k-r)\polytope.$
		
		Conversely, assume that for any integer $k  \geq r$,
		$(k\polytope)^{(1)}=(r\polytope)^{(1)}+(k-r)\polytope$. 
		By Lemma~\ref{alcoved:lemma}, $(k\polytope)^{(1)}, (r\polytope)^{(1)}$ and  $(k-r)\polytope$ are alcoved polytopes.
		Hence by Corollary~\ref{cor:Alcoved_onto}, $\polytope$ is level.
	\end{proof}
	
	\begin{lemma}
		\label{lem:min}
		Let $\polytope \subset \R^d$ be a lattice polytope
		and let $r' \geq \textnormal{codeg}(\polytope)$ be an integer.
		Assume that there exists an integer $k > r'$
		such  that $(k\polytope)^{(1)}=(r'\polytope)^{(1)}+(k-r')\polytope$.
		Then for any integer $k > k' \geq r'$,
		we have $(k'\polytope)^{(1)}=(r'\polytope)^{(1)}+(k'-r')\polytope$.
	\end{lemma}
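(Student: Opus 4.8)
The plan is to deduce the asserted equality from a Minkowski‑cancellation argument. Write $\polytope=\{\x\in\R^d: A\x\le\b\}$ with $A$ an integer matrix and $\b$ an integer vector, so that $m\polytope=\{\x: A\x\le m\b\}$ and $(m\polytope)^{(1)}=\{\x: A\x\le m\b-1\}$ for every non‑negative integer $m$. Two elementary facts carry the argument. First, for non‑negative integers $s,t$ one has $(t\polytope)^{(1)}+s\polytope\subseteq\bigl((t+s)\polytope\bigr)^{(1)}$, since $A\y\le t\b-1$ and $A\z\le s\b$ force $A(\y+\z)\le(t+s)\b-1$. Second, $m\polytope=m_1\polytope+m_2\polytope$ whenever $m=m_1+m_2$ with $m_i\ge 0$, because $\polytope$ is convex. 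In particular the inclusion $(r'\polytope)^{(1)}+(k'-r')\polytope\subseteq(k'\polytope)^{(1)}$ holds automatically, so only the reverse inclusion requires work.

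Fix an integer $k'$ with $r'\le k'<k$; the case $k'=r'$ is trivial, so assume $k'>r'$. The first fact (with $t=k'$, $s=k-k'$) gives $(k'\polytope)^{(1)}+(k-k')\polytope\subseteq(k\polytope)^{(1)}$, while the hypothesis together with the second fact gives
\[
(k\polytope)^{(1)}=(r'\polytope)^{(1)}+(k-r')\polytope=\Bigl((r'\polytope)^{(1)}+(k'-r')\polytope\Bigr)+(k-k')\polytope .
\]
Combining the two inclusions,
\[
(k'\polytope)^{(1)}+(k-k')\polytope\ \subseteq\ \Bigl((r'\polytope)^{(1)}+(k'-r')\polytope\Bigr)+(k-k')\polytope .
\]
Set $A':=(k'\polytope)^{(1)}$, $B:=(r'\polytope)^{(1)}+(k'-r')\polytope$, $C:=(k-k')\polytope$. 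Since $k'\ge r'\ge\codeg(\polytope)$, both $k'\polytope$ and $r'\polytope$ contain an interior lattice point, which then lies in $(k'\polytope)^{(1)}$ resp. $(r'\polytope)^{(1)}$; hence $A'$ and $B$ are nonempty, $B$ is compact and convex (a Minkowski sum of two polytopes), and $C$ is a nonempty compact convex set as $k-k'\ge 1$.

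It remains to cancel $C$ from $A'+C\subseteq B+C$, which is exactly Rådström's cancellation lemma: for nonempty sets with $B$ closed and convex and $C$ bounded, $A'+C\subseteq B+C$ implies $A'\subseteq B$. (Concretely, iterating gives $A'+nC\subseteq B+nC$ for all $n$; fixing $\a\in A'$ and $\u_0\in C$ one writes $\a+\u_{i-1}=\v_i+\u_i$ with $\v_i\in B$, $\u_i\in C$, sums the first $n$ of these to get $n\a=\sum_{i=1}^n\v_i+\u_n-\u_0$, divides by $n$, and lets $n\to\infty$: $\tfrac1n\sum_{i=1}^n\v_i\in B$ by convexity, $\tfrac1n(\u_n-\u_0)\to 0$ by boundedness of $C$, and closedness of $B$ gives $\a\in B$.) Thus $(k'\polytope)^{(1)}\subseteq(r'\polytope)^{(1)}+(k'-r')\polytope$, and together with the automatic reverse inclusion this is the claimed equality. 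The only genuine obstacle is recognizing the cancellation structure and checking its hypotheses — in particular the non‑emptiness, which is where $r'\ge\codeg(\polytope)$ is used; everything else is a one‑line inequality manipulation. (For an alcoved $\polytope$ the inner parallel body $(m\polytope)^{(1)}=\{\x:A\x\le m\b-1\}$ coincides with $\conv(\int(m\polytope)\cap\Z^d)$, so the lemma applies in the form needed for Theorem~\ref{thm:chainbound}.)
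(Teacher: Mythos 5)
Your proof is correct and matches the paper's argument in substance: both establish $(k'\polytope)^{(1)} + (k-k')\polytope \subseteq (r'\polytope)^{(1)} + (k'-r')\polytope + (k-k')\polytope = (k\polytope)^{(1)}$ and then cancel the common summand $(k-k')\polytope$, the paper phrasing this as a contradiction after observing that adding $(k-k')\polytope$ to both sides of the (assumed strict) inclusion $(k'\polytope)^{(1)} \supsetneq (r'\polytope)^{(1)} + (k'-r')\polytope$ would give $(k\polytope)^{(1)} \supsetneq (k\polytope)^{(1)}$, while you argue directly via R\aa dstr\"om cancellation. Your write-up is in fact slightly more careful than the paper's: the paper's implicit step ``$A \supsetneq B$ implies $A + C \supsetneq B + C$'' for convex bodies is exactly the cancellation lemma you name, verify the hypotheses of (closedness, convexity, boundedness, and non-emptiness, the last being where $r' \ge \codeg(\polytope)$ enters), and sketch a proof of.
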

	
	\begin{proof}
		Assume that 
		there exists an integer	$k>k' \geq r'$ such that
		$(k'\polytope)^{(1)} \supsetneq  (r'\polytope)^{(1)}+(k'-r')\polytope$.
		In general, it follows that $(k'\polytope)^{(1)} \supset  (r'\polytope)^{(1)}+(k'-r')\polytope$.
		Then we have 
		$$(k\polytope)^{(1)} \supset (k'\polytope)^{(1)} +(k-k')\polytope \supsetneq  (r'\polytope)^{(1)}+(k-r')\polytope=(k\polytope)^{(1)}.$$
		Hence this is a contradiction.
	\end{proof}
	
	On levelness of dilated polytopes, the following theorem is known.
	\begin{theorem}[{\cite[Theorem 1.3.3]{BrunsGubeladzeTrung}}]
		\label{di_level}
		Let $\polytope$ be a lattice $d$-polytope.
		Then for any integer $k \geq d+1$, $k\polytope$ is level.
	\end{theorem}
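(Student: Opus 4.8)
The plan is to identify $\k[k\polytope]$ with a high Veronese subring of the Ehrhart ring $\k[\polytope]$ and to invoke the standard fact that a sufficiently high Veronese forces the canonical module to be generated in a single degree, where ``sufficiently high'' is governed by the Castelnuovo--Mumford regularity of $\omega_{\k[\polytope]}$ (which I will compute to equal $d+1$). First, a reduction to codegree one: since $h^{\ast}_0 = 1$ we have $\deg(\polytope) \ge 0$, hence $\codeg(\polytope) \le d+1$ by Definition~\ref{def:DegAndCodeg}, and therefore for $k \ge d+1$ the dilate $k\polytope$ contains an interior lattice point --- add an interior lattice point of $\codeg(\polytope)\cdot\polytope$ to any lattice point of $(k-\codeg(\polytope))\polytope$ --- so that $\codeg(k\polytope) = 1$. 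Thus it suffices to show that $\omega_{\k[k\polytope]}$ is generated in degree $1$.

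Next, one checks directly that $\k[k\polytope] = \k[\polytope]^{(k)}$, the $k$-th Veronese subring: the degree-$m$ component of either ring is spanned by the monomials corresponding to the lattice points of $\cone(\polytope)$ at height $km$, equivalently to $km\polytope \cap \Z^d$. Since $\k[\polytope]$ is Cohen--Macaulay (Theorem~\ref{thm:ch1Hochster}), the canonical module of a Veronese subring is the corresponding Veronese submodule, so writing $M := \omega_{\k[\polytope]}$ for the Danilov--Stanley module of Theorem~\ref{thm:ch1DanilovStanley} we have $\omega_{\k[k\polytope]} = M^{(k)} = \bigoplus_{m \ge 0} M_{km}$; this is a standard fact about canonical modules of Veronese subrings of Cohen--Macaulay graded algebras (see, e.g., \cite{BrunsGubeladze}).

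Now let $R := \k[\polytope]$, a standard graded $\k$-algebra with $R_0 = \k$ of Krull dimension $d+1$. As $R$ is Cohen--Macaulay, $M$ is maximal Cohen--Macaulay, so $H^i_{\maxideal}(M) = 0$ for $i \ne d+1$, and by graded local duality $H^{d+1}_{\maxideal}(M)$ is the graded $\k$-dual of $\operatorname{Hom}_R(M,M) \cong R$, hence is concentrated in non-positive degrees. Therefore $\operatorname{reg}(M) = d+1$; in particular $M$ is generated in degrees at most $d+1$, so $M_e = R_{e-(d+1)}\,M_{d+1}$ for every $e \ge d+1$. Fixing $k \ge d+1$ and $j \ge 2$, we have $k(j-1) \ge k \ge d+1$ and $kj \ge d+1$, so
\[
\bigl(M^{(k)}\bigr)_j = M_{kj} = R_{kj-(d+1)}\,M_{d+1} = R_k\,\bigl(R_{k(j-1)-(d+1)}\,M_{d+1}\bigr) = R_k\,M_{k(j-1)} = \bigl(R^{(k)}\bigr)_1\bigl(M^{(k)}\bigr)_{j-1}.
\]
Hence $M^{(k)}$ is generated in degrees at most $1$; since $M_0 = 0$ and $\bigl(M^{(k)}\bigr)_1 = M_k = \int(k\polytope)\cap\Z^d \neq \varnothing$ by the first step, $\omega_{\k[k\polytope]} = M^{(k)}$ is generated in degree $1 = \codeg(k\polytope)$, i.e., $k\polytope$ is level.

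The only inputs beyond routine bookkeeping are the identity $\omega_{\k[k\polytope]} = (\omega_{\k[\polytope]})^{(k)}$ and the computation $\operatorname{reg}(\omega_{\k[\polytope]}) = d+1$; I expect the point most in need of care to be keeping the grading conventions for the canonical module consistent across these two facts, so that the threshold emerges exactly as $k \ge d+1$ rather than off by one.
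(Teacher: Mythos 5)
The paper does not prove this theorem --- it is cited from Bruns--Gubeladze--Trung and then used as a black box in the proof of Theorem~\ref{thm:prod} --- so there is no in-paper argument to compare against. Judged on its own, your Veronese-plus-regularity strategy is a reasonable route, and the main algebraic ingredients are in order: the identification $\k[k\polytope]\cong\k[\polytope]^{(k)}$ and $\omega_{\k[k\polytope]}\cong(\omega_{\k[\polytope]})^{(k)}$ fall out directly from the Danilov--Stanley description (interior lattice points of $\cone(k\polytope)$ at height $m$ are exactly those of $\cone(\polytope)$ at height $km$), so you do not even need the general Goto--Watanabe fact; maximal Cohen--Macaulayness of $M:=\omega_{\k[\polytope]}$ collapses local cohomology to the top cohomological degree; graded local duality together with $\operatorname{Hom}_R(\omega_R,\omega_R)\cong R$ (valid since $R$ is a normal domain) gives $a_{d+1}(M)=0$ and hence $\operatorname{reg}(M)=d+1$; and the reduction $\codeg(k\polytope)=1$ for $k\ge d+1$ is correct.

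There is, however, a real gap: you assert that $R=\k[\polytope]$ is standard graded, i.e.\ generated in degree one. That is precisely the integer-decomposition property for $\polytope$, which fails for general lattice polytopes (e.g.\ the Reeve simplices $\conv\{0,e_1,e_2,e_1+e_2+Ne_3\}$ for $N\ge 2$). Your argument uses standard gradedness twice: in passing from $\operatorname{reg}(M)\le d+1$ to $M_e=R_{e-(d+1)}M_{d+1}$, and in the equality $R_{kj-(d+1)}M_{d+1}=R_k\bigl(R_{k(j-1)-(d+1)}M_{d+1}\bigr)$, which needs $R_aR_b=R_{a+b}$. The standard repair should be spelled out: pick a Noether normalization $S=\k[\theta_1,\dots,\theta_{d+1}]\subset R$ by degree-one parameters (possible since $\k$ is infinite). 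Local cohomology of $M$ over $S$ equals that over $R$, so $\operatorname{reg}_S(M)=d+1$; since $M$ is maximal Cohen--Macaulay it is $S$-free, and the regularity bound forces its free generators into degrees at most $d+1$, whence $M_e=S_{e-(d+1)}M_{d+1}$ for $e\ge d+1$. Then $M_{kj}=S_{kj-(d+1)}M_{d+1}=S_kS_{k(j-1)-(d+1)}M_{d+1}\subset R_kM_{k(j-1)}\subset M_{kj}$, which is the inclusion you need, and your conclusion that $M^{(k)}$ is generated in degree $1=\codeg(k\polytope)$ survives.
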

	
	Now, we prove the following theorem about levelness of a product of alcoved polytopes.
	\begin{theorem}
		\label{thm:prod}
		Let $\polytope \subset \R^d$ and $Q \subset \R^{e}$ be alcoved polytopes.
		Suppose that $Q$ is level and  $r=\textnormal{codeg}(Q) \geq \dim \polytope+1$.
		Then $P \times Q \subset \R^{d+e}$ is level.
	\end{theorem}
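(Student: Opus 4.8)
The plan is to reduce levelness of $P\times Q$ to the Minkowski‑sum criterion of Proposition~\ref{al_level} and then split the resulting identity into independent statements about $P$ and about $Q$. Write $P=\{x:Ax\le c\}$ and $Q=\{y:By\le c'\}$; since $P$ and $Q$ are alcoved we may take $A,B$ totally unimodular and $c,c'$ integral. Then $P\times Q=\{(x,y):Ax\le c,\ By\le c'\}$ is again alcoved, and for every $k\ge 1$
\[
(k(P\times Q))^{(1)}=\{(x,y):Ax\le kc-\mathbf 1,\ By\le kc'-\mathbf 1\}=(kP)^{(1)}\times(kQ)^{(1)}.
\]
Moreover $\int(kP\times kQ)=\int(kP)\times\int(kQ)$, so by Remark~\ref{rem:CodegreeInterpretation} one has $\codeg(P\times Q)=\max\{\codeg(P),\codeg(Q)\}$; since $\codeg(P)\le\dim P+1\le r=\codeg(Q)$ (Definition~\ref{def:DegAndCodeg}), this maximum equals $r$. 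Hence, by Proposition~\ref{al_level} applied to the alcoved polytope $P\times Q$, it suffices to prove $(k(P\times Q))^{(1)}=(r(P\times Q))^{(1)}+(k-r)(P\times Q)$ for all $k\ge r$. Substituting the product formula above and using $(S\times T)+(S'\times T')=(S+S')\times(T+T')$, this identity is equivalent to the conjunction
\[
(kP)^{(1)}=(rP)^{(1)}+(k-r)P\qquad\text{and}\qquad(kQ)^{(1)}=(rQ)^{(1)}+(k-r)Q,
\]
required to hold for every $k\ge r$.

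The second identity is immediate: $Q$ is level with $\codeg(Q)=r$, so it is precisely the statement of Proposition~\ref{al_level} for $Q$. Everything therefore comes down to proving the first identity, $(kP)^{(1)}=(rP)^{(1)}+(k-r)P$ for all $k\ge r$, and this is where the hypothesis $r\ge\dim P+1$ enters, even though $P$ itself need not be level.

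To prove this $P$-identity I would argue as follows. Since $r\ge\dim P+1$, Theorem~\ref{di_level} gives that $rP$ is level; and since $r\ge\dim P+1\ge\codeg(P)$, the dilate $rP$ already contains an interior lattice point, so $\codeg(rP)=1$. Applying Proposition~\ref{al_level} to the alcoved polytope $rP$ (alcoved by Lemma~\ref{alcoved:lemma}), evaluated at dilation factor $m$, yields
\[
(mrP)^{(1)}=(rP)^{(1)}+(m-1)(rP)=(rP)^{(1)}+(mr-r)P\qquad\text{for every }m\ge 1.
\]
Now fix any $m\ge 2$. Then Lemma~\ref{lem:min}, with $r'=r$ (note $r\ge\codeg(P)$) and $k=mr>r$, shows that $(k'P)^{(1)}=(rP)^{(1)}+(k'-r)P$ for every $k'$ with $r\le k'<mr$. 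Letting $m\to\infty$, this holds for all $k'\ge r$, which is exactly the $P$-identity.

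Combining the $P$-identity and the $Q$-identity through the reduction of the first paragraph gives $(k(P\times Q))^{(1)}=(r(P\times Q))^{(1)}+(k-r)(P\times Q)$ for all $k\ge r$, and hence $P\times Q$ is level by Proposition~\ref{al_level}. The main obstacle is really the $P$-identity: one must notice that, although $P$ need not be level, Theorem~\ref{di_level} forces the desired decomposition along the arithmetic progression $k\in r\Z_{\ge 1}$, and Lemma~\ref{lem:min} then propagates it down to all intermediate dilates. The remaining ingredients --- that $(\cdot)^{(1)}$, Minkowski sums, and codegrees all factor through the Cartesian product --- are routine once the block-diagonal alcoved description of $P\times Q$ is written out.
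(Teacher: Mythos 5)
Your proof is correct and follows essentially the same route as the paper's: establish $(kP)^{(1)}=(rP)^{(1)}+(k-r)P$ by combining Theorem~\ref{di_level} (to get that $rP$ is level), Proposition~\ref{al_level} applied to $rP$ (giving the identity along multiples of $r$), and Lemma~\ref{lem:min} (to interpolate down to all $k\ge r$); then use levelness of $Q$ via Proposition~\ref{al_level} and assemble the two identities through the Cartesian-product factorizations of $(\cdot)^{(1)}$ and Minkowski sums. The only cosmetic difference is that the paper phrases the final step as a chain of inclusions $(k(P\times Q))^{(1)}\subset (r(P\times Q))^{(1)}+(k-r)(P\times Q)\subset (k(P\times Q))^{(1)}$, whereas you split the product identity into the conjunction of the two factor identities; both are correct.
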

	
	\begin{proof}
		By Theorem~\ref{di_level}, $r \polytope$ is level.
		Hence for any positive integer $k'$, one has $(k'r\polytope)^{(1)}=(r\polytope)^{(1)}+(k'-1)r\polytope$ from Proposition~\ref{al_level}.
		Therefore, by Lemma~\ref{lem:min}, it follows that 
		for any $k \geq r$, we obtain
		$(k\polytope)^{(1)}=(r\polytope)^{(1)}+(k-r)\polytope$.
		Since $Q$ is level, 
		for any $k \geq r$, we obtain
		$(kQ)^{(1)}=(rQ)^{(1)}+(k-r)Q$.
		Fix a positive integer $k \geq r$.
		Since
		$\text{int}(k(\polytope \times Q)) \cap \Z^{d+e}=(\text{int}(k\polytope) \cap \Z^d) \times (\text{int}(kQ) \cap \Z^{e}),$
		\begin{displaymath}
			\begin{aligned}
				(k(\polytope \times Q))^{(1)} &=(k\polytope)^{(1)} \times (kQ)^{(1)}\\
				&=((r\polytope)^{(1)}+(k-r)\polytope) \times ((rQ)^{(1)}+(k-r)Q)\\
				&\subset ((r\polytope)^{(1)} \times (rQ)^{(1)})+(k-r)(\polytope \times Q)\\
				&=(r(\polytope \times Q))^{(1)} +(k-r)(\polytope \times Q)\\
				&\subset (k(\polytope \times Q))^{(1)}.
			\end{aligned}
		\end{displaymath}
		Hence $\polytope \times Q$ is level.
	\end{proof}
	Now, we prove Theorem~\ref{thm:chainbound}.
	\begin{proof}[Proof of Theorem~\ref{thm:chainbound}]
		The order polytope of $\poset \cup C_{r}$ is the Cartesian product of $\order(\poset)$ and $\order(C_{r})$, which is the $r$-dimensional unimodular simplex. This simplex has codegree $r+1$.
		Hence by Theorem~\ref{thm:prod},  the claim now follows.
	\end{proof}
	
	Conversely, we consider posets all of whose connected components are level.
	In fact, these posets are always level. 
	
	\begin{theorem}
		\label{thm:prod2}
		Let $\poset$ be a poset on $d$ elements and $\poset_1,\ldots,\poset_m$ the connected components of $\Pi$.
		If each $\poset_i$ is level, then $\poset$ is level.
	\end{theorem}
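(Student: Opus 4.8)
The plan is to use that $\order(\poset)$ splits as a Cartesian product over the connected components and then to invoke the alcoved‑polytope criterion of Proposition~\ref{al_level}. First I would record the structural facts. Since $\poset_1,\ldots,\poset_m$ are the connected components, elements in distinct components are incomparable, so $\order(\poset)=\order(\poset_1)\times\cdots\times\order(\poset_m)$. Write $\polytope=\order(\poset)$, $\polytope_i=\order(\poset_i)\subseteq\R^{d_i}$ with $d_i=\#\poset_i$ and $\sum_i d_i=d$, and $r_i=\codeg(\polytope_i)$. Each $\polytope_i$ is alcoved, hence so is $\polytope$, and by Lemma~\ref{alcoved:lemma} so are all the dilates and all the polytopes $(k\polytope_i)^{(1)}$, $(k\polytope)^{(1)}$ used below. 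The first computation is the codegree of the product: because $\int(c\polytope)=\prod_i\int(c\polytope_i)$, a lattice point lies in $\int(c\polytope)$ exactly when its $i$‑th block lies in $\int(c\polytope_i)$ for every $i$, so by Remark~\ref{rem:CodegreeInterpretation} we get $r:=\codeg(\polytope)=\max_i r_i$.

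Next I would reduce to a Minkowski‑sum identity. Since $\polytope$ is alcoved, Proposition~\ref{al_level} says it suffices to prove $(k\polytope)^{(1)}=(r\polytope)^{(1)}+(k-r)\polytope$ for every integer $k\ge r$. The crucial preliminary step is to ``re‑base'' each factor from its own codegree $r_i$ to the common value $r$: fixing $i$ and an integer $k\ge r\ge r_i$, Proposition~\ref{al_level} applied to the level alcoved polytope $\polytope_i$ gives both $(k\polytope_i)^{(1)}=(r_i\polytope_i)^{(1)}+(k-r_i)\polytope_i$ and $(r\polytope_i)^{(1)}=(r_i\polytope_i)^{(1)}+(r-r_i)\polytope_i$, and since $(k-r_i)\polytope_i=(r-r_i)\polytope_i+(k-r)\polytope_i$ these combine to
\[
(k\polytope_i)^{(1)}=(r\polytope_i)^{(1)}+(k-r)\polytope_i \qquad\text{for every } i \text{ and every integer } k\ge r.
\]

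Finally I would pass to products. Using the alcoved description $(k\polytope)^{(1)}=\conv(\int(k\polytope)\cap\Z^d)$, taking interior lattice points and then convex hulls commutes with direct products of the coordinate blocks, so $(k\polytope)^{(1)}=\prod_i(k\polytope_i)^{(1)}$ and $(r\polytope)^{(1)}=\prod_i(r\polytope_i)^{(1)}$, while $(k-r)\polytope=\prod_i(k-r)\polytope_i$; moreover, for sets $A_i,B_i\subseteq\R^{d_i}$ one has $\prod_i(A_i+B_i)=\bigl(\prod_i A_i\bigr)+\bigl(\prod_i B_i\bigr)$, since the summands may be chosen block by block. Plugging the factorwise identity into these gives
\[
(k\polytope)^{(1)}=\prod_i\bigl((r\polytope_i)^{(1)}+(k-r)\polytope_i\bigr)=\Bigl(\prod_i(r\polytope_i)^{(1)}\Bigr)+\Bigl(\prod_i(k-r)\polytope_i\Bigr)=(r\polytope)^{(1)}+(k-r)\polytope
\]
for all $k\ge r$, so by Proposition~\ref{al_level} the polytope $\polytope=\order(\poset)$ is level, i.e.\ $\poset$ is level.

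I expect no deep obstacle here; the one point that needs care is that the components may have different codegrees, which forces one to re‑base all of them at $r=\max_i r_i$ rather than at their individual codegrees. That re‑basing step is precisely where the (automatically available) integer‑decomposition property of order polytopes enters, through Proposition~\ref{al_level}; for general level polytopes the corresponding statement fails without further hypotheses, cf.\ Theorem~\ref{thm:ProdOfLevel} and Remark~\ref{rem:ProdOfLevel}.
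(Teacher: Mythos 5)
Your proof is correct, but it takes a genuinely different route from the paper. The paper deduces Theorem~\ref{thm:prod2} as a direct consequence of the more general Theorem~\ref{thm:ProdOfLevel}: since $\order(\poset)=\prod_i\order(\poset_i)$, and each $\order(\poset_i)$ is a level polytope with the integer-decomposition property, one applies Theorem~\ref{thm:ProdOfLevel} inductively to the factors (the codegree of a partial product is the maximum of the component codegrees, and IDP is preserved under products). The proof of Theorem~\ref{thm:ProdOfLevel} works at the semigroup level: given an interior lattice point of $\cone(P\times Q)$ at height $h$, it projects to the two factors, uses levelness of each to split off interior generators at heights $\codeg P$ and $\codeg Q$ respectively, and then uses IDP of the lower-codegree factor to ``slide up'' its interior generator to the common height $\max(\codeg P,\codeg Q)$. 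You achieve the same re-basing not at the lattice-point level but at the level of convex bodies, via the alcoved Minkowski-sum criterion (Proposition~\ref{al_level}); alcovedness is what makes this legitimate, since the passage between the lattice statement and the convex-body statement in Proposition~\ref{al_level} uses Corollary~\ref{cor:Alcoved_onto}, which encodes the IDP-type structure. In effect, you prove a special case of Theorem~\ref{thm:ProdOfLevel} restricted to alcoved factors, using the machinery of Section~7 that the paper developed for Theorem~\ref{thm:prod} rather than the direct cone argument. The paper's route gains generality (arbitrary level polytopes, with explicit IDP hypotheses), while your route is arguably more streamlined when the factors are already known to be alcoved, as order polytopes are, and it surfaces the re-basing identity $(k\polytope_i)^{(1)}=(r\polytope_i)^{(1)}+(k-r)\polytope_i$ for all $k\ge r=\max_j r_j$ quite transparently.
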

	\rem{
		\begin{lemma}
			\label{lem:idp}
			Let $\Pc \subset \R^d$ be a level polytope of dimension $d$ which has the integer-decomposition property.
			Then for any positive integer $t$, $t\Pc$ is level.
		\end{lemma}
		\begin{proof}
			Let $\ell$ be the codegree of $\Pc$.
			Then we know that the codegree of $t\Pc$ equals
			$m:=\lceil \ell/t \rceil$.
			Let $k \geq m$ be positive integer and $x$ a lattice point of $\textnormal{int}(k(t\Pc))$.
			Since $\Pc$ is level,
			there exist $y \in \textnormal{int}(\ell \Pc) \cap \Z^d$ and $z_1,\ldots,z_{kt-\ell}  \in \Pc \cap \Z^d$ such that $x=y+z_1+\cdots+z_{kt-\ell}$.
			Then $y+z_1+\cdots+z_{mt-\ell} \in  \textnormal{int}(m(t\Pc)) \cap \Z^d$
			and $z_{mt-\ell+1},\ldots,z_{kt-\ell} \in t\Pc \cap \Z^d$.
			Hence $t\Pc$ is level.
		\end{proof}
	}
	
	Theorem~\ref{thm:prod2} follows from the following result:
	\begin{theorem}
		\label{thm:ProdOfLevel}
		Let $P \subset \R^d$ and $Q\subset \R^e$ be level polytopes. If either
		\begin{enumerate}
			\item $\codeg(Q)< \codeg (P)$ and $Q$ has the integer-decomposition property,
			\item $\codeg(P)< \codeg (Q)$ and $P$ has the integer-decomposition property,
			\item or if $\codeg(Q) = \codeg (P)$,
		\end{enumerate}
		then $P \times Q$ is level. 
	\end{theorem}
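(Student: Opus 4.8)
The plan is to argue directly from the description of the canonical module by interior lattice points (Theorem~\ref{thm:ch1DanilovStanley}) together with the reformulation of levelness in Remark~\ref{rem:LevelExplanation}: a lattice polytope $R$ is level if and only if, writing $c=\codeg(R)$, for every $k\geq c$ every interior lattice point of $kR$ is the sum of an interior lattice point of $cR$ and a lattice point of $(k-c)R$. By exchanging the roles of $P$ and $Q$ if necessary --- which swaps hypotheses (1) and (2) --- I may assume $\codeg(P)\geq\codeg(Q)$, and that if $\codeg(P)>\codeg(Q)$ then $Q$ has the integer-decomposition property. Set $r:=\codeg(P)$ and $s:=\codeg(Q)\leq r$. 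First I would record two elementary facts: $\codeg(P\times Q)=\max(\codeg P,\codeg Q)=r$, since $k(P\times Q)$ has an interior lattice point exactly when both $kP$ and $kQ$ do; and $\int(k(P\times Q))\cap\Z^{d+e}=(\int(kP)\cap\Z^d)\times(\int(kQ)\cap\Z^e)$, because the interior of a product is the product of the interiors.

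Next I would fix $k\geq r$ and an interior lattice point $\x=(\x_1,\x_2)$ of $k(P\times Q)$, so $\x_1\in\int(kP)\cap\Z^d$ and $\x_2\in\int(kQ)\cap\Z^e$. It then suffices to produce $\y_1\in\int(rP)\cap\Z^d$ and $\y_2\in\int(rQ)\cap\Z^e$ with $\x_1-\y_1\in(k-r)P$ and $\x_2-\y_2\in(k-r)Q$; for then $(\y_1,\y_2)$ is an interior lattice point of $r(P\times Q)$ and $\x-(\y_1,\y_2)\in(k-r)(P\times Q)$, which by the reformulation above gives levelness of $P\times Q$ (note $\codeg(P\times Q)=r$). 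Since $P$ is level and $k\geq r=\codeg(P)$, levelness of $P$ supplies $\y_1$ directly. If $s=r$, then $Q$ is level and $k\geq s$, so levelness of $Q$ supplies $\y_2\in\int(sQ)\cap\Z^e=\int(rQ)\cap\Z^e$ with $\x_2-\y_2\in(k-s)Q=(k-r)Q$, and we are done.

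The only genuinely new case is $s<r$, where by assumption $Q$ is IDP. Here levelness of $Q$ gives $\z_2\in\int(sQ)\cap\Z^e$ with $\x_2-\z_2\in(k-s)Q\cap\Z^e$, and the integer-decomposition property lets me write $\x_2-\z_2=\mathbf{w}_1+\cdots+\mathbf{w}_{k-s}$ with each $\mathbf{w}_i\in Q\cap\Z^e$ (using $k-s\geq r-s\geq 1$). The key elementary observation is that $\int(aQ)+(Q\cap\Z^e)\subseteq\int((a+1)Q)$ for every $a\geq 1$: writing $Q$ by facet inequalities $\langle\a_i,\cdot\rangle\leq b_i$, a strict inequality at a point $p$ plus a non-strict one at $q$ yields a strict one at $p+q$. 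Applying this $r-s$ times, $\y_2:=\z_2+\mathbf{w}_1+\cdots+\mathbf{w}_{r-s}$ lies in $\int(rQ)\cap\Z^e$, while $\x_2-\y_2=\mathbf{w}_{r-s+1}+\cdots+\mathbf{w}_{k-s}$ is a sum of $k-r$ lattice points of $Q$ and hence lies in $(k-r)Q\cap\Z^e$. Combining with $\y_1$ finishes the proof. Theorem~\ref{thm:prod2} then follows by induction on the number of components: $\order(\poset)=\order(\poset_1)\times\cdots\times\order(\poset_m)$, all factors and all their products are IDP, so whichever of (1), (2), (3) applies in a given step can always be invoked. I expect the one place needing care is the degree bookkeeping in the IDP-lifting step --- in particular the boundary case $k=r$, where the leftover sum $\mathbf{w}_{r-s+1}+\cdots+\mathbf{w}_{k-s}$ is empty and $\y_2=\x_2$ --- and checking that the reformulated notion of levelness is applied with the correct codegree at each invocation.
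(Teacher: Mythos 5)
Your proof is correct and takes essentially the same approach as the paper's: reduce to the factors, use levelness of each, and in the unequal-codegree case use the IDP of the smaller-codegree factor to lift its interior point from height $\codeg(Q)$ up to height $\codeg(P)$ by tacking on lattice points of $Q$. The paper phrases this in cone coordinates (regrouping height-$1$ summands of $(\tilde{\x}_Q,h-r_Q)$) and leaves the fact that an interior cone element plus a cone element remains interior implicit, whereas you work in dilated-polytope coordinates and state that fact explicitly via the facet inequalities; the substance is identical.
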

	
	\begin{proof}
		Let $r_P : = \codeg (P)$ and let $r_Q : = \codeg(Q)$. Without loss of generality, we assume $\codeg (Q)\leq \codeg(P)$. Then $r:= \codeg({P \times Q}) = \max \{r_P, r_Q \} = r_P$, since $\x = (\x_P, \x_Q) \in \int (P \times Q) \cap \Z^{d+e}$ implies that $\x_P \in \int(P) \cap \Z^d$ and that $\x_Q \in \int(Q) \cap \Z^e$. Let $(\x_P, \x_Q,h)\in \cone(P \times Q)\cap \Z^{d+e+1}$ with $h>r$. Then this point projects to points $(\x_P,h) \in \cone(P)\cap \Z^{d+1}$ and $(\x_Q,h) \in \cone(Q)\cap \Z^{e+1}$. Since $P$ is level, we have that 
		\[
		(\x_P,h) = (\x_P^{\circ},r) + (\tilde{\x}_P, h-r),
		\]
		where $(\x_P^{\circ},r) \in \int(\cone (P))\cap \Z^{d+1}$ and $(\tilde{\x}_P, h-r)\in \cone(P) \cap \Z^{d+1}$. Similarly, since $Q$ is level, we have that 
		\[
		(\x_Q,h) = (\x_Q^{\circ},r_Q) + (\tilde{\x}_Q, h-r_Q),
		\]
		where $(\x_Q^{\circ},r_Q) \in \int(\cone (Q))\cap \Z^{e+1}$ and $(\tilde{\x}_Q, h-r_Q)\in \cone(Q) \cap \Z^{e+1}$. If $\codeg(P) = \codeg(Q)$, we now get a decomposition
		\[
		(\x_P,\x_Q,h) = (\x^{\circ}_P, \x^{\circ}_Q, r) + (\tilde{\x}_P,\tilde{\x}_Q, h-r),
		\]
		where $(\x^{\circ}_P, \x^{\circ}_Q)  \in r \int(P \times Q) \cap \Z^{d+e}$ and where $(\tilde{\x}_P,\tilde{\x}_Q)\in (h-r)(P \times Q) \cap \Z^{d+e}$ proving levelness of $P\times Q$.

		So let's assume $\codeg(Q)<\codeg(P)$. Since $Q$ has the integer-decomposition property, we can express $(\tilde{\x}_Q, h-r_Q)$ as a sum of height $1$ elements, i.e., $(\tilde{\x}_Q, h-r_Q) = (\tilde{\x}_Q^{(1)}, r_P - r_Q) + (\tilde{\x}_Q^{(2)}, h-r_P)$ and thus obtain
		\[
		(\x_Q,h) = (\x_Q^{\circ},r_Q) + (\tilde{\x}_Q, h-r_Q) = {(\x_Q^{\circ} +\tilde{\x}_Q^{(1)} ,r_P)} + (\tilde{\x}_Q^{(2)}, h-r_P),
		\]
		where $(\x_Q^{\circ} +\tilde{\x}_Q^{(1)},r_P) \in \int (\cone(Q)) \cap \Z^{e+1}$ and $(\tilde{\x}_Q^{(2)}, h-r_P)\in \cone(Q) \cap Z^{e+1}$. Therefore, we can express $(\x_P, \x_Q, h)$ as
		\[
		(\x_P, \x_Q, h) = (\x_P^{\circ}, \x_Q^{\circ} +\tilde{\x}_Q^{(1)} , r) + (\tilde{\x}_P,\tilde{\x}_Q^{(2)}, h-r)
		\]
		where $(\x_P^{\circ}, \x_Q^{\circ} +\tilde{\x}_Q^{(1)}) \in r \int(P \times Q) \times \Z^{d+e}$ and where $(\tilde{\x}_P,\tilde{\x}_Q^{(2)}) \in (h-r)\int(P \times Q)\cap \Z^{d+e}$.
	\end{proof}
	
	\begin{remark}
		\label{rem:ProdOfLevel}
		In Theorem~\ref{thm:ProdOfLevel}, we really need the assumption that the polytope of lower codegree has the integer-decomposition property. Consider the following example, where 
		\[
		P = \conv \{(0,0,0),(1,0,0),(0,1,0),(1,1,2) \}
		\]
		and where
		\[
		Q = \conv\{(0,0),(1,0),(0,1) \}.
		\]
		Then $P$ does not have the integer-decomposition property, but it is Gorenstein and thus level. Moreover, $Q$ has the integer-decomposition property and it is level, since it is Gorenstein. However, the product $P \times Q$ is not level.
		\begin{proof}
			We first remark that $\codeg({Q}) = 3$ and $\codeg({P}) = 2$. Hence, $\codeg (P \times Q) = 3$. There are exactly $4$ integer points in the interior of $3 (P \times Q)$, namely
			\[
			\int (3(P \times Q))\cap \Z^5 = \{(1,1,1,1,1), (1,2,1,1,1),(2,1,1,1,1),(2,2,3,1,1) \}.
			\]
			However, the integer point $(2,2,2,2,1) \in \int (4 (P \times Q))\cap \Z^{5}$ cannot be written as a sum
			\[
			(2,2,2,2,1) = \x_1 + \x_2
			\] 
			of points $\x_1 \in \int (3(P \times Q))\cap \Z^5$ and $\x_2 \in (P \times Q) \cap \Z^5$. Thus, $P \times Q$ is not level.
		\end{proof}
		
	\end{remark}
	
	We end this article with the following criterion for levelness:
	\begin{lemma}
		\label{lem:unimodularcoverwithinternalface}
		If a lattice $d$-polytope $\polytope$ has a covering by unimodular simplices such that every interior face of such a simplex contains an interior sub-face of dimension $\codeg(\polytope)-1$, then $\polytope$ is level.
	\end{lemma}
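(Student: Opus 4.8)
The plan is to establish levelness via the reformulation in Remark~\ref{rem:LevelExplanation}. Write $r := \codeg(\polytope)$; it suffices to show that every $\x \in \int(\cone(\polytope)) \cap \Z^{d+1}$ can be written as $\x = \y + \z$ with $\y \in \int(\cone(\polytope)) \cap \Z^{d+1}$ at height $r$ and $\z \in \cone(\polytope) \cap \Z^{d+1}$. Let $h$ denote the height of $\x$; then $h \geq r$, and if $h = r$ we take $\y = \x$ and $\z = 0$, so we may assume $h > r$. We may also assume that every simplex of the covering is $d$-dimensional, since the $d$-dimensional ones already cover the dense open set $\int(\polytope)$ and hence all of $\polytope$.

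The first step is to locate $\x$ in the cone over a single simplex of the covering. From $\bigcup_\sigma \sigma = \polytope$ we get $\bigcup_\sigma \cone(\sigma) = \cone(\polytope)$, so $\x \in \cone(\sigma)$ for some $\sigma = \conv\{v_0, \dots, v_d\}$ of the covering. Since $\sigma$ is unimodular, the lifted vertices $\widetilde v_i := (v_i, 1)$ form a $\Z$-basis of $\Z^{d+1}$, so there are unique $a_0, \dots, a_d \in \Z_{\geq 0}$ with $\x = \sum_{i=0}^d a_i \widetilde v_i$ and $\sum_i a_i = h$. Put $S := \{\, i : a_i > 0 \,\}$ and $F := \conv\{\, v_i : i \in S \,\}$, a face of $\sigma$. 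The crucial observation is that $F$ is an \emph{interior} face of $\sigma$, i.e.\ $\operatorname{relint}(F) \subseteq \int(\polytope)$: the point $\x/h$ lies at height $1$ with spatial coordinates $\tfrac1h\sum_{i \in S} a_i v_i \in \operatorname{relint}(F)$, and since $\x \in \int(\cone(\polytope))$ these coordinates lie in $\int(\polytope)$; moreover, for a face $F$ of a simplex contained in $\polytope$, the set $\operatorname{relint}(F)$ is either disjoint from $\partial\polytope$ or entirely contained in it, because each facet-defining hyperplane of $\polytope$ either contains $F$ or meets it in a proper face.

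Next, I would invoke the hypothesis for the interior face $F$: it contains an interior sub-face $G$ of dimension $r - 1$, say $G = \conv\{\, v_i : i \in T \,\}$ with $T \subseteq S$ and $|T| = r$. Setting $\y := \sum_{i \in T} \widetilde v_i$, we see that $\y$ has height $|T| = r$, and its spatial part $\tfrac1r \sum_{i \in T} v_i \in \operatorname{relint}(G) \subseteq \int(\polytope)$ shows $\y \in \int(\cone(\polytope)) \cap \Z^{d+1}$ at height $r$. Since $T \subseteq S$ we have $a_i \geq 1$ for every $i \in T$, hence
\[
\z := \x - \y = \sum_{i \in T}(a_i - 1)\,\widetilde v_i + \sum_{i \in S \setminus T} a_i\,\widetilde v_i
\]
is a non-negative integer combination of the $\widetilde v_i$ and thus lies in $\cone(\sigma) \cap \Z^{d+1} \subseteq \cone(\polytope) \cap \Z^{d+1}$. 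This yields the required decomposition $\x = \y + \z$, so $\polytope$ is level.

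The only genuinely topological point is the claim that $\operatorname{relint}(F)$ cannot straddle $\partial\polytope$, which is where the phrase ``interior face'' acquires its precise meaning; everything else is routine lattice arithmetic in the unimodular cone $\cone(\sigma)$. I expect this topological verification, together with the remark that the hypothesis forces every interior face (in particular $F$) to have dimension at least $\codeg(\polytope) - 1$ so that the index set $T$ exists, to be the only places requiring care.
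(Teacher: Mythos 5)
Your proof is correct and follows essentially the same route as the paper: locate $\x$ in the cone over a unimodular simplex of the covering, pass to the face spanned by the support of the unique $\Z_{\geq 0}$-representation, apply the hypothesis to extract an interior $(r-1)$-dimensional sub-face, and subtract the sum of its lifted vertices. The only differences are cosmetic --- you fold the paper's two cases (some $\lambda_i=0$ versus all $\lambda_i>0$) into one by working directly with the support set $S$, and you spell out the verification that $F$ is an interior face via the facet-hyperplane dichotomy, a point the paper asserts without argument.
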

	
	\begin{proof}
		Let $\b \in \int (\cone(\polytope))\cap \Z^{d+1}$ and let $r : = \codeg(P)$. We will prove levelness by showing that every such $\b$ can be written as an integral combination of an interior integer point on height $r$ with integer points on height $1$.

		The unimodular simplices in the covering of $\polytope$ give rise to a covering of $\cone(\polytope)$ by unimodular cones. Therefore, $\b$ is in at least one such unimodular cone. Let $\Delta = \conv \{\v_1,\v_2,\dots,\v_{d+1} \}$ be the corresponding unimodular simplex and let $\cone(\Delta)$ be the cone over $\Delta$. Then there is a unique representation 
		\begin{equation}
			\label{eq:xwithrespecttoDelta}
			\b= \sum_{i = 1}^{d+1}\lambda_{i}(\v_i,1),
		\end{equation}
		where $\lambda_i \in \Z_{\geq 0}$. There are two cases: If $\lambda_i=0$ for some $i\in I$ in an index set $I$, then this means that $\b$ is contained an $(d+1 - \#I)$-dimensional face, which has to be an interior face. Then, for $J :=\{1,\dots,d+1 \}\setminus I$, the face $\conv\{ \v_j\}_{j \in J}$ gives rise to a ($\#J$)-dimensional cone containing $\b$. Hence, by assumption there is a subset $R \subset J$ with $\# R = r$, such that the point $\x = \sum_{i \in R} (\v_i,1)$ is a point in $\int ( \cone(\polytope))\cap \Z^{d+1}$. Combining this with Equation (\ref{eq:xwithrespecttoDelta}), we obtain
		\[
		\b = \x + \left(\b - \x\right),
		\]
		where $\b- \x \in \cone(\polytope) \cap \Z^{d+1}$, which proves levelness,
		
		If $\lambda_{i} \neq 0$ for all $i\in \{ 1,2,\dots, d+1\}$, then we can set $\x = \sum_{i \in J}(\v_i,1)$, where $J$ is an interior ($r-1$)-dimensional face. The claim now follows analogously.
	\end{proof}
	It would be interesting to see a poset interpretation of this lemma.
	\bibliographystyle{amsalpha}
	\bibliography{refs}

\end{document}